\documentclass[a4paper,11pt,twoside,reqno]{amsart}

\RequirePackage[colorlinks,citecolor=blue,urlcolor=blue]{hyperref}
\usepackage[utf8]{inputenc}
\usepackage{tcolorbox}
\usepackage[title]{appendix}
\usepackage{empheq}
\usepackage[T1]{fontenc}
\usepackage[english]{babel}
\usepackage{cases}
\usepackage{multirow}
\usepackage{bm}
\usepackage{enumerate}
\frenchspacing
\usepackage{indentfirst}
\usepackage{xcolor}
\usepackage[top=5.cm, bottom=5.0cm, left=2cm, right=2cm]{geometry}
\usepackage{authblk}
\usepackage{comment}
\usepackage{mathtools}
\allowdisplaybreaks
\usepackage{framed}
\usepackage{amsmath}
\usepackage{amsthm}	
\usepackage{amsfonts}
\usepackage{mathrsfs}	
\usepackage{amssymb}
\usepackage{dsfont}
\usepackage{bbm}
\usepackage{tcolorbox}
\usepackage{xcolor}
\usepackage[shortlabels]{enumitem}

\theoremstyle{plain}
\newtheorem{theorem}{Theorem}[section]
\newtheorem{lemma}[theorem]{Lemma}
\newtheorem{proposition}[theorem]{Proposition}
\newtheorem{corollary}[theorem]{Corollary}
\theoremstyle{definition}
\newtheorem{definition}[theorem]{Definition}

\newtheorem{example}[theorem]{Example}

\theoremstyle{definition}
\newtheorem{remark}[theorem]{Remark}

\specialcomment{com}{}{}
\excludecomment{com} 

\numberwithin{equation}{section}
\numberwithin{figure}{section}
 \usepackage[nodayofweek]{datetime}

\usepackage[nodayofweek]{datetime}

\newcommand{\comm}[1]{}

\DeclareMathOperator*{\sign}{\text{sign}}

\newcommand{\bE}{\mathbb{E}}

\newcommand{\bN}{\mathbb{N}}
\newcommand{\bP}{\mathbb{P}}

\newcommand{\bR}{\mathbb{R}}

\def \E {\mathbb{E}}


\newcommand{\cB}{\mathcal{B}}

\newcommand{\cF}{\mathcal{F}}

\newcommand{\cI}{\mathcal{I}}

\newcommand{\cL}{\mathcal{L}}

\newcommand{\cM}{\mathcal{M}}
\newcommand{\cN}{\mathcal{N}}

\newcommand{\cP}{\mathcal{P}}

\newcommand{\cS}{\mathcal{S}}

\newcommand{\cV}{\mathcal{V}}


\def \eps {\epsilon}


\newcommand{\lev}{\left\langle}
\newcommand{\rev}{\right\rangle}

\newcommand{\N}{\mathbb{N}}
\newcommand{\R}{\mathbb{R}}	


\newcommand{\law}[1][]{  {\mu}^{\infty}_{{#1}}  }
\newcommand{\rvlaw}[1][]{  \cL{({#1})}  }
\newcommand{\nlaw}[1][]{  {\mu}^{N}_{#1}  }

\newcommand{\indexi}[3]{  {#1}^{(#2)}_{#3}  }
\newcommand{\intrd}{\int_{\bR^d}}
\newcommand{\ld}[1][]{\frac{\delta {#1}}{\delta m}}
\newcommand{\sld}[1][]{\frac{\delta^2 {#1}}{\delta m^2}}

\newcommand{\pmu}{\partial_{\mu}}
\newcommand{\ptwomu}{\partial^2_{\mu}}
\newcommand{\pnmu}{\partial^n_{\mu}}

\title[Central limit theorem over non-linear functionals of empirical measures]{Central limit theorem over non-linear functionals of empirical measures with applications to the mean-field fluctuation of interacting diffusions}

\author{Benjamin Jourdain and Alvin Tse}
\thanks{This research benefited from the support of the ``Chaire Risques Financiers'', Fondation du Risque.}
\address{CERMICS, Ecole des Ponts, INRIA, Marne-la-Vall\'{e}e, France.}
\email{benjamin.jourdain@enpc.fr, alvin.tse@enpc.fr}

\date{}

\begin{document}	
\selectlanguage{english}
\maketitle
\begin{center}
   Benjamin Jourdain and Alvin Tse \par \bigskip
\end{center}

\begin{abstract}
    In this work, a generalised version of the central limit theorem is proposed for nonlinear functionals of the empirical measure of i.i.d. random variables, provided that the functional satisfies some regularity assumptions for the associated linear functional derivative. This generalisation can be applied to Monte-Carlo methods, even when there is a nonlinear dependence on the measure component. We use this result to deal with the contribution of the initialisation in the convergence of the fluctuations between the empirical measure of interacting diffusion and their mean-field limiting measure (as the number of particles goes to infinity), when the dependence on measure is nonlinear. A complementary contribution related to the time evolution is treated using the {\it master equation}, a parabolic PDE involving $L$-derivatives with respect to the measure component, which is a stronger notion of derivative that is nonetheless related to the linear functional derivative.
\end{abstract}

{
\hypersetup{linkcolor=black}
}


\section{Introduction and notations}Central limit theorems (CLTs) and their generalisations have long been studied in the last century. The first notable generalisation of the CLTs was proposed by Lyapunov in 1901, which only requires the random variables to be independent, but not necessarily identically distributed, under certain growth conditions of moments of some order $2+ \delta$. The moment condition can be further weakened in the  Lindeberg  condition  (proposed in 1922) and is  used  in  most  cases  where  weak convergence to a normal distribution is considered with non-identically distributed variables. See \cite{mether2003history} for more details regarding the history of different versions of CLTs. Since then, the literature on different types of CLTs is enormous and there are corresponding versions for dependent processes, martingales and time series. In the mathematical statistics literature, particular attention has been paid to CLTs that are uniform over a class of test functions (see for instance Sections 2.5 and 2.8 in \cite{wellner2009aad}), in order to extend the one-dimensional case of the indicator functions of the intervals $((-\infty,x])_{x\in\R}$ which is covered by the Kolmogorov-Smirnov theorem. Von Mises \cite{vonmisesaihp,vonmises} was the first to address the case of nonlinear functionals of the empirical measure $\frac{1}{N}\sum_{i=1}^N\delta_{\zeta_i}$ of independent and identically distributed $\R^d$-valued random vectors $(\zeta_i)_{i\ge 1}$ through the use of Taylor expansions and we refer to Chapter 6 in \cite{serfling} for a book presentation of the theory that he initiated. He explored the possibility that the first order term in the expansion provides a vanishing limit and then the lowest order term with nonzero limit converges to some non Gaussian distribution. While the limiting behaviour of the various terms in the expansion with derivatives computed at the common distribution $m_0$ of the random vectors may follow from standard limit theorems from probability theory (in particular, the usual central limit theorem applies to the first order contribution), the main challenge is to prove that the remainder which mixes the empirical measure with $m_0$ in the derivatives goes to zero. Let us now discuss this issue in the case treated in the present paper of first order expansions where the difficulty is not less.
In dimension $d=1$, Boos and Serfling  \cite{BoosSerfling} assume the existence of a Gateaux differential $\frac{d}{d \varepsilon} |_{\varepsilon = 0^{+}} U \big( m_0 + \varepsilon ( \nu - m_0) \big)=dU(m_0,\nu-m_0)$ (for $\nu$ any probability measure on the real line) linear in $\nu-m_0$  of $U$ at $m_0$ such that
$$U\left(\frac{1}{N}\sum_{i=1}^N\delta_{\zeta_i}\right)-U(m_0)-\frac{1}{N}\sum_{i=1}^NdU(m_0,\delta_{\zeta_i}-m_0)=o\left(\left\|\frac{1}{N}\sum_{i=1}^N 1_{\{\zeta_i\le \cdot\}}-m_0((-\infty,\cdot])\right\|_\infty\right).$$
From the boundedness in probability of $\left(\sqrt{N}\left\|\frac{1}{N}\sum_{i=1}^N 1_{\{\zeta_i\le \cdot\}}-m_0((-\infty,\cdot])\right\|_\infty\right)_{N\ge 1}$, as a  consequence of \cite{DKW}, they deduce the weak convergence of $\sqrt{N}(U(\frac{1}{N}\sum_{i=1}^N\delta_{\zeta_i})-U(m_0))$ to a centered Gaussian random variable with asymptotic variance equal to the common variance of the independent and identically distributed random variables $dU(m_0,\delta_{\zeta_i}-m_0)$ when they are square integrable and centered. For more flexibility, they remark that the conclusion remains valid when the third term in the left-hand side is multiplied by a random variable which converges in probability to $1$ as $N\to\infty$. In addition to the limitation of their approach to dimension $d=1$, it relies on the uniformity of the approximation with respect to the Kolmogorov-Smirnov distance, which is a strong assumption almost amounting to Fréchet differentiability of $U$ at $m_0$ for the Kolmogorov norm $\|\nu-m_0\|=\left\|\nu((-\infty,\cdot])-m_0((-\infty,\cdot])\right\|_\infty$:
$$U(\nu)-U(m_0)-dU(m_0,\nu-m_0)=o\left(\|\nu-m_0\|\right),$$
with $dU(m_0,\nu-m_0)$ linear but not necessarily continuous in $\nu-m_0$ for this norm. When $m_0$ is a probability on any measurable space, Dudley (\cite{Dudley}) obtains central limit theorems for $\sqrt{N}(U(\frac{1}{N}\sum_{i=1}^N\delta_{\zeta_i})-U(m_0))$ under the same notion of Fréchet differentiability with $\|\nu-m_0\|=\sup_{f\in{\mathcal F}}\left|\int f(x)(\nu-m_0)(dx)\right|$ where the class ${\mathcal F}$ of measurable functions is such that a central limit theorem for empirical measures holds with respect to uniform convergence over ${\mathcal F}$. Clearly the requirements on ${\mathcal F}$ impose some balance: the larger the value of ${\mathcal F}$, the easier Fréchet differentiability becomes, but the stronger the uniform convergence over ${\mathcal F}$ becomes. The following is mentioned by Dudley  \cite{Dudley} in p.76: ``the Gateaux derivative has been considered too weak (see also p.110 in \cite{estyetal}, p.216 in Serfling \cite{serfling}  and p.40 in Huber \cite{huber}), unless there is some uniformity along different lines and such uniformity is all the more needed in this paper''.

The linear functional derivative of $U$ (see \cite{cardaliaguet2019master}, \cite{carmona2017probabilistic}, \cite{chassagneux2019weak} and \cite{delarue2018master}) that we recall and further investigate in the second section of the present paper and subsequently apply in the third section to study the asymptotic behaviour of $\sqrt{N}(U(\frac{1}{N}\sum_{i=1}^N\delta_{\zeta_i})-U(m_0))$ is also a Gateaux derivative, but with the additional weak requirement that $dU(m_0,\nu-m_0)=\int_{\R^d}\ld[U](m_0,y)(\nu-m_0)(dy)$, for some measurable real valued function $\R^d\ni y\mapsto \ld[U](m_0,y)$ with some polynomial growth assumption in $y$. Therefore, the linearity, square integrability and centered property mentioned above (when summarizing \cite{BoosSerfling} and what we will also need) are automatically satisfied when the growth assumption is related to the index of the Wasserstein space that contains all the probability measures under consideration.
To avoid the uniformity leading to Fréchet differentiability required in the statistical literature, we suppose that the linear functional derivative exists not only at $m_0$ but on a Wasserstein ball with positive radius containing $m_0$. This is a quite mild restriction, since when a central limit theorem holds for some statistical functional, it is in general not limited to a single value of the common distribution $m_0$ of the samples. Then we linearise $\sqrt{N}(U(\frac{1}{N}\sum_{i=1}^N\delta_{\zeta_i})-U(m_0))$ into the sum of \begin{equation}
   \frac{1}{\sqrt{N}}\sum_{i=1}^N\bigg(\ld[U]\bigg(\frac{N+1-i}{N}m_0+\frac{1}{N}\sum_{j=1}^{i-1}\delta_{\zeta_j},\zeta_i\bigg)-\int_{\R^d}\ld[U]\bigg(\frac{N+1-i}{N}m_0+\frac{1}{N}\sum_{j=1}^{i-1}\delta_{\zeta_j},x\bigg)m_0(dx)\bigg)\label{mart}
\end{equation} and a remainder. This decomposition is different from the one only involving $m_0$ as the measure argument in the Gateaux derivative considered in the previously discussed literature on Von Mises differentiable statistical functions or in the recent papers \cite{delarue2018master} and \cite{szpruch2019antithetic} also using the linear functional derivative. It is aimed at enabling the analysis of the limiting behaviour of the sum by the central limit theorem for arrays of martingale increments while permitting to exploit that the very strong total variation distance between $m^{N,i}_s:=\frac{N+1-s-i}{N}m_0+\frac{1}{N}\sum_{j=1}^{i-1}\delta_{\zeta_j}+\frac{s}{N}\delta_{\zeta_i}$ and $m^{N,i}_0$ is smaller than $\frac{s}{N}$, in order to ensure that the remainder
$$ \frac{1}{\sqrt{N}} \sum_{i=1}^N \int_0^1 \intrd \left(\ld[U] (m^{N,i}_s,y)-\ld[U] (m^{N,i}_0,y)\right) \, ( \delta_{\zeta_i} - m_0)(dy) \, ds$$
vanishes in probability as $N\to\infty$, as soon as $\ld[U] (\nu,y)$ satisfies some H\"older continuity with exponent $\alpha>\frac{1}{2}$ in total variation with respect to its first variable.  In our CLT for nonlinear functionals $U$, we add some further regularity assumptions on $\ld[U]$ to check the Lindeberg condition and the convergence of the bracket of \eqref{mart}. 

The second main result of this work is a CLT on mean-field fluctuations. Large systems of interacting individuals/agents occur in many different areas of science; the individuals/agents may be people, computers, flocks of animals, or particles in moving fluid.  Mean-field theory  was developed to study particle systems by considering the asymptotic behaviour of the agents or particles, as their number goes to infinity. Instead of considering a system with a huge dimension, one can effectively approximate macroscopic and statistical features of the
system as well as the average behaviour of particles. In a probabilistic setting, the limiting behaviour can be described by a type of SDEs, called McKean-Vlasov SDEs, whose coefficients depend on the probability distribution of the process itself. We consider the fluctuation between a standard particle system $(Y^{i,N})_{1 \leq i \leq N}$ (see \eqref{eq:particlesystem} for its model) and its standard McKean-Vlasov limiting process $X$ (see \eqref{eq:MVSDE} for its equation). When the interaction only takes place in the drift coefficient and the diffusion coefficient is bounded from below (which, in particular, holds when the diffusion coefficient is constant), it is possible to express the density of the law of the interacting particle system with respect to that of independent copies of the McKean-Vlasov limiting process by Girsanov theorem. Then a central limit theorem may be derived by studying the limiting behaviour of this density using symmetric statistics and multiple Wiener integrals as in \cite{sznitman84} and \cite{shigatanaka}.

When interaction also takes place in the diffusion coefficient, this is no longer possible and the standard approach in the literature involves an approximation of the
average position of a smooth test function $\phi: \bR^d \to \bR$ of the particles by \eqref{eq:MVSDE} and its limiting fluctuation.  More precisely, denoting $\mu^N$ to be the empirical measure of all the particles and $\law[]$ to be the law of $X$, one considers the decomposition
$$ \frac{1}{N} \sum_{i=1}^N \phi(Y^{i,N}_t) = \bE \big[ \phi (X_t) \big] + \frac{1}{\sqrt{N}} \lev S^N_t, \phi \rev, $$ 
where the fluctuation measure $S^N$ is defined by
$$ S^N := \sqrt{N} \big( \nlaw[] - \law[] \big)$$
and 
$\lev m, \phi \rev := \int_{\bR^d} \phi \,d m$, for any signed measure $m$. The classical approach is to show that the sequence of random measures $(S^N)_{N \geq 1}$ converges in law as random processes taking values in some Sobolev space. This is done via a classical tightness argument, which implies the existence of a weak limit (through a subsequence) by the Prokhorov's theorem. The limit is shown to satisfy an Ornstein-Uhlenbeck process in an appropriate space. In \cite{hitsuda1986tightness}, the Sobolev space being considered is $C([0,T], \Phi'_p)$, where $\Phi'_p$ is the dual of $\Phi_p$, with $\Phi_p$ being the completion of the Schwarz space of rapidly decreasing infinitely differentiable functions under a suitable class of seminorms $\| \cdot \|_p$.  This result was generalised in \cite{meleard1996asymptotic} to the Sobolev space $C([0,T],W^{-(2+2D),D}_0)$, whereas the limiting Ornstein-Uhlenbeck process is in $C([0,T],W^{-(4+2D),D}_0)$, where $D= 1+ \left \lfloor{\frac{d}{2}}\right \rfloor $. A similar result was proven in \cite{delarue2018master} to include mean-field equations with additive common noise. We remark that, in all these approaches, by considering measures to be in the dual of a Sobolev space,   a linear dependence on the measure component is imposed implicitly. 
Unlike the approach in \cite{delarue2018master}, \cite{hitsuda1986tightness} and \cite{meleard1996asymptotic}, we analyse the fluctuation under non-linear functionals $\Phi : \cP_2(\bR^d)  \to \bR$, i.e. we consider the limiting distribution of the process
$$  F^N:= \sqrt{N}	 \big[ \Phi(\nlaw[\cdot])  - \Phi ( \law[\cdot] ) \big]$$ 
in the space $C(\bR_{+}, \bR)$,
where $\cP_2(\bR^d)$ denotes the space of probability measures with finite second moments. This gives us a limiting CLT in mean-field fluctuations in the space $C(\bR_{+}, \bR)$. 

The development of the theory in this paper relies on the calculus on the Wasserstein space. We use two notions of derivatives in measure in this paper. The first notion, the linear functional derivative, is an analogue of the variational derivative over a manifold (see \cite{cardaliaguet2019master}). Linear functional derivatives are used to prove the different versions of CLTs for i.i.d. random variables. The second notion, the L-derivative (see the notes by Cardaliaguet \cite{cardaliaguet2010notes}), was introduced by Lions in his lectures at the Coll\`ege de France  by defining a derivative in the $W_2$ space based on the `lift' to the $L^2$ space of square-integrable random variables (see \eqref{eq lift}). According to \cite{GaTu},  the L-derivative coincides with the geometric derivative introduced formerly in  \cite{ambrosio2008gradient}. L-derivatives are used to prove the CLT for mean-field fluctuations. 

The paper is organized as follows. Section 2 focuses on the notion of linear functional derivatives as well as their properties. Section 3 exhibits three versions of CLTs (with different sufficient conditions) through the properties of linear functional derivatives developed in Section 2. Finally, Section 4 develops the notion of L-derivatives followed by a version of CLT on mean-field fluctuations.

\subsection{Notations} 
$\bR_{+}$ denotes the set of non-negative real numbers. For real numbers $a$ and $b$, $a \wedge b$ and $a \vee b$ denote respectively the minimum and maximum of $a$ and $b$. For $c,d \in \bR^d$, $c \cdot d$ denotes the dot product between $c$ and $d$.  We denote the Hilbert-Schmidt norm of any matrix by $\| \cdot \|$.   For any $a,b \in \bR$ that depend on $N$, the notation $a \lesssim b$ denotes $a \leq Cb$, for some constant $C>0$ that does not depend on $N$. 

For any function $g: \bR \to \bR$, we adopt the notations $g'_{+}(s)$ or $\frac{d}{d \eps} \big|_{\eps=s^{+}} g(\eps)$ to denote the right-hand derivative of $g$ at $s \in \bR$. 
In the final section, we consider the space $C(\bR_{+}, \bR)$, which is the space of continuous functions from $\bR_{+} $ to $\bR$ equipped with the metric
$$d_{C(\bR_{+}, \bR)}(f,g):= \sum_{k=1}^{\infty} \frac{1}{2^k} \max_{1 \leq t \leq k} \Big[ | f(t) - g(t)| \wedge 1  \Big]. $$

For $\ell\ge 0$, we denote by $\cP_\ell(\bR^d)$ the set of probability measures $m$ on $\R^d$ such that $\int_{\R^d}|x|^\ell m(dx)<\infty$. For $\ell>0$, we consider the $\ell$-Wasserstein metric, defined by
\begin{eqnarray}
W_\ell ( \mu_1, \mu_2) & := & \inf \bigg\{ \bigg( \int_{\bR^d \times \bR^d} |x-y|^\ell \rho (dx , dy) \bigg)^{1/(\ell\vee 1)} \, \bigg| \,  \, \rho \in \cP_\ell ( \bR^{2d}) \, \, \text{ with } \nonumber \\
&& \rho \big( \cdot \times \bR^d) = \mu_1, \, \rho \big( \bR^d  \times \cdot) = \mu_2 \, \bigg\}, \quad \quad \mu_1, \mu_2 \in \mathcal{P}_\ell(\bR^d). \label{defwl}
\end{eqnarray}
For $\ell\ge 1$, it is well known that $W_\ell$ is a metric on $\cP_\ell(\bR^d)$ and that if $\mu\in\cP_\ell(\bR^d)$ and $(\mu_n)_{n\in\N}$ is a sequence in this space, then $\lim_{n\to\infty}W_\ell(\mu_n,\mu)=0$ iff $\mu_n$ converges weakly to $\mu$ as $n\to\infty$ and $\lim_{n\to\infty}\int_{\R^d}|x|^\ell\mu_n(dx)=\int_{\R^d}|x|^\ell\mu(dx)$ (see for instance Definition 6.4 and Theorem 6.9 in \cite{villani2008optimal}). For $\ell\in (0,1)$, the definition of $W_\ell$ is not so standard and we check in Lemma \ref{lemwl} in Appendix that these properties remain true.
We also consider the total variation metric on the set $\cP_0(\bR^d)$ of all probability measures on $\R^d$ given by
\begin{eqnarray}
W_0 ( \mu_1, \mu_2) & := & \inf \bigg\{\int_{\bR^d \times \bR^d} 1_{\{x\neq y\}} \rho (dx , dy) \, \bigg| \,  \, \rho \in \cP_0 ( \bR^{2d}) \, \, \text{ with } \nonumber \\
&& \rho \big( \cdot \times \bR^d) = \mu_1, \, \rho \big( \bR^d  \times \cdot) = \mu_2 \, \bigg\}, \quad \quad \mu_1, \mu_2 \in \mathcal{P}_0(\bR^d). \nonumber
\end{eqnarray} 
Notice that $W_0(\mu_1,\mu_2)=\sup_{A\in\cB(\R^d)}|\mu_1(A)-\mu_2(A)|=\frac{1}{2}|\mu_1-\mu_2|(\R^d)$, where $\cB(\bR^d)$ denotes the Borel $\sigma$-algebra of $\bR^d$ and $|\mu_1-\mu_2|$ the absolute value (or total variation) of the signed measure $\mu_1-\mu_2$.
We have $\inf_{\ell\ge 0}W_\ell\ge \underline W$ where $\underline W$, defined like $W_1$ but with $|x-y|\wedge 1$ replacing the integrand $|x-y|$ in \eqref{defwl}, metricizes the topology of weak convergence according to Corollary 6.13 \cite{villani2008optimal}.

For any random variable $\xi$, $\rvlaw[{\xi}]$ denotes the law of $\xi$.  Finally, $L^2(\Omega,\cF, \bP; \bR^d)$ denotes the Hilbert space of $L^2$ random variables taking values in $\bR^d$, equipped with the inner product $\lev \xi, \eta \rev = \bE[ \xi \cdot \eta]$.  
\subsection*{Acknowledgements} We thank the referee for mentionning the literature on Von Mises differentiable statistical functions to us and La\"etitia Della Maestra for numerous relevant remarks on the first version of the manuscript.
\section{Linear functional derivatives and their properties}

The notion of linear functional derivatives appears in quite a few papers in the literature. It is defined as a functional derivative in \cite{cardaliaguet2019master}, through a limit of  perturbation by linear interpolation of measures (see \eqref{eq de first order deriv}). It can also be defined via an explicit formula concerning the difference between the values of the function evaluated at two probability measures (see \eqref{difderk-1}), as more often done in the literature of mean-field games and McKean-Vlasov equations, such as \cite{carmona2017probabilistic}, \cite{chassagneux2019weak}, \cite{delarue2018master} and \cite{tse2021higher}. Corollary \ref{cordiffk-1} shows that \eqref{eq de first order deriv} implies \eqref{difderk-1} under some growth assumption. Conversely, if we assume that the linear functional derivative is continuous in the product topology of $\cP_{\ell}(\bR^d) \times \bR^d$, then one can easily check that \eqref{difderk-1} implies \eqref{eq de first order deriv}.
 
\begin{definition}
 Let $\ell\ge 0$. A function $U: \cP_\ell(\bR^d) \to \bR$ admits a linear functional derivative at $\mu\in\cP_\ell(\bR^d)$ if there exists a real valued measurable function $\R^d\ni y\mapsto \ld[U](\mu,y)$ such that $\sup_{y\in\R^d}\left|\ld[U](\mu,y)\right|/(1+|y|^\ell)<\infty$ and 
 \begin{equation}\forall \nu\in \cP_\ell(\bR^d),\; \frac{d}{d \varepsilon} \bigg|_{\varepsilon = 0^{+}} U \big( \mu + \varepsilon ( \nu - \mu) \big) = \int_{\bR^d} \frac{\delta U}{\delta m} (\mu,y) \, ( \nu - \mu)(dy). \label{eq de first order deriv} \end{equation}
 
 Inductively, for $j \geq 2$, supposing that $U$ admits a \emph{$(j-1)$-th order linear functional derivative} $(\R^d)^{j-1}\ni\mathbf{y}\mapsto\frac{\delta^{j-1} U}{\delta m^{j-1}}(m,\mathbf{y})$ at $m$ for $m$ in a $W_\ell$-neighbourhood of $\mu\in\cP_\ell(\bR^d)$, we say that $U$ admits a \emph{$j$-th order linear functional derivative} derivative at $\mu$ if for each $\mathbf{y}\in(\R^d)^{j-1}$, $m\mapsto \frac{\delta^{j-1} U}{\delta m^{j-1}}(m,\mathbf{y})$ admits a linear functional derivative at $\mu$ i.e. there exists a real valued measurable function $\R^d\ni y\mapsto \frac{\delta^{j} U}{\delta m^{j}}(\mu,\mathbf{y},y)$ such that $\sup_{y\in\R^d}\left|\frac{\delta^{j} U}{\delta m^{j}}(\mu,\mathbf{y},y)\right|/(1+|y|^\ell)<\infty$ and 
\begin{equation}\forall \nu \in \cP_2(\bR^d),\;\frac{d}{d \varepsilon} \bigg|_{\varepsilon = 0^{+}} \frac{\delta^{j-1} U}{\delta m^{j-1}}\big( \mu + \varepsilon ( \nu - \mu), \mathbf{y})  = \int_{\bR^d} \frac{\delta^{j} U}{\delta m^{j}} (\mu,\mathbf{y}, y) \, ( \nu - \mu)(dy). \label{eq de higher order deriv} \end{equation}
\end{definition} 
Notice that $W_\ell(\mu,\mu+\varepsilon(\nu-\mu))\le\varepsilon^{1\wedge 1/\ell}W_\ell(\mu,\nu)$ so that $\mu + \varepsilon ( \nu - \mu)$ belongs to the $W_\ell$-neighbourhood of $\mu$ for $\varepsilon$ small enough.
 Since $(\nu-\mu)(\R^d)=0$,  $\frac{\delta U}{\delta m}$ is defined up to an additive constant via \eqref{eq de first order deriv}. Iteratively, we normalise the higher order derivatives via the convention that 
\begin{equation}
   \frac{\delta^j U}{\delta m^j}(m,y_1, \ldots, y_j) = 0, \quad \text{ if } y_i=0 \, \, \text{ for some } i \in \{1, \ldots, j \}.
    \label{eq: normalisation linear functional deriatives}
\end{equation} 


The following class $\cS_{j, k}(\cP_\ell(\bR^d))$ is used as  hypotheses of the central limit theorems in the subsequent section. \begin{definition}[Class $\cS_{j,k}(\cP_\ell(\bR^d))$]
For $j \in \bN$ and $k,\ell\ge 0$, the class $\cS_{j,k}(\cP_\ell(\bR^d))$ is defined by 
\begin{align*} \cS_{j,k}(\cP_\ell(\bR^d)):= \bigg\{& U: \cP_\ell(\bR^d)\to \bR \, \,  :   \text{for each } 1 \leq i \leq j , \, \, \frac{\delta^i U}{\delta m^i}\text{ exists on $\cP_\ell(\bR^d)\times(\R^d)^i$ }.\\  & \text{The map } (x_1,\ldots,x_i)\mapsto \frac{\delta^i U}{\delta m^i}(\mu,x_1, \ldots, x_i)\text{ is measurable  and} \\
&\bigg| \frac{\delta^i U}{\delta m^i}(\mu,x_1, \ldots, x_i)  \bigg| \leq C \bigg( 1+ |x_1|^{k} + \ldots + |x_i|^{k} + 1_{\{\ell>0\}}\bigg( \intrd |x|^\ell \, \mu(dx) \bigg)^{k/\ell} 
\bigg) \, ,  \\
&\text{for each } x_1, \ldots, x_i \in \bR^d \,  \text{ and } \mu \in \cP_{\ell} (\bR^d), \text{ for some } C<\infty \bigg\}.
\end{align*}
\end{definition}
The next theorem expresses a finite difference of the $(j-1)$-th order functional derivative as an integral of the $j$-th order functional derivative.
\begin{theorem} \label{lin thm} Let $\ell\ge 0$, $m,m' \in  \cP_\ell(\bR^d)$, and 
suppose that the $j$th order linear functional derivative of a function $U: \cP_\ell(\bR^d) \to \bR$ exists on the segment $(m_s:=sm'+(1-s)m)_{s\in[0,1]}$. Then for every $\mathbf{y}\in(\R^d)^{j-1}$ such that $\sup_{(s,y)\in[0,1]\times\R^d}\left|\frac{\delta^{j} U}{\delta m^{j}}(m_s,\mathbf{y},y)\right|/(1+|y|^\ell)<\infty$, the function $[0,1]\ni s\mapsto \frac{\delta^{j-1} U}{\delta m^{j-1}}(m_s,\mathbf{y})$ is Lipschitz continuous and
\begin{equation}
    \frac{\delta^{j-1} U}{\delta m^{j-1}}(m',\mathbf{y}) - \frac{\delta^{j-1} U}{\delta m^{j-1}}(m,\mathbf{y}) = \int_0^1 \int_{\bR^{d}} \frac{\delta^j U}{\delta m^j}((1-s)m +sm',\mathbf{y},y') \, (m'-m)(  dy') \, ds.\label{difderk-1}
\end{equation}
\end{theorem}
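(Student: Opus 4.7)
Fix $\mathbf{y}\in(\R^d)^{j-1}$ satisfying the uniform growth hypothesis, and set $g(s):=\frac{\delta^{j-1} U}{\delta m^{j-1}}(m_s,\mathbf{y})$ and $\phi(s):=\int_{\R^d}\frac{\delta^j U}{\delta m^j}(m_s,\mathbf{y},y)(m'-m)(dy)$ for $s\in[0,1]$. The plan is in three steps: first, identify $\phi$ as the one-sided derivative of $g$ at every interior point; second, deduce Lipschitz continuity of $g$ from a uniform bound on $|\phi|$; third, conclude by the fundamental theorem of calculus.

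For the first step, I fix $s\in[0,1)$ and $h\in(0,1-s]$. The algebraic identity $m'-m_s=(1-s)(m'-m)$ gives $m_{s+h}=m_s+\varepsilon(m'-m_s)$ with $\varepsilon:=h/(1-s)$. Since $m'\in\cP_\ell(\R^d)$, the defining property \eqref{eq de higher order deriv} of the $j$-th linear functional derivative of $m\mapsto\frac{\delta^{j-1} U}{\delta m^{j-1}}(m,\mathbf{y})$ at $m_s$ applied in the direction $m'$ yields, upon dividing by $h$ and letting $h\to 0^+$,
$g'_+(s)=\tfrac{1}{1-s}\int_{\R^d}\tfrac{\delta^j U}{\delta m^j}(m_s,\mathbf{y},y)(m'-m_s)(dy)=\phi(s).$
The symmetric rewriting $m_{s-h}=m_s+(h/s)(m-m_s)$ for $s\in(0,1]$, combined with $m\in\cP_\ell(\R^d)$, gives $g'_-(s)=\phi(s)$ by the same argument.

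For the remaining two steps, the uniform growth bound $K:=\sup_{(s,y)\in[0,1]\times\R^d}|\frac{\delta^j U}{\delta m^j}(m_s,\mathbf{y},y)|/(1+|y|^\ell)$ assumed in the hypothesis, combined with $m,m'\in\cP_\ell(\R^d)$, yields the uniform estimate $|\phi(s)|\le K\int_{\R^d}(1+|y|^\ell)(m+m')(dy)=:L<\infty$. A standard real-analysis lemma, provable by a Dini-type argument applied to $s\mapsto g(s)\pm Ls$, then implies that any function on $[0,1]$ with right-derivative bounded in absolute value by $L$ at every point of $[0,1)$ is $L$-Lipschitz. Hence $g$ is absolutely continuous, differentiable almost everywhere with $g'=\phi$ a.e.\ by the first step, and the fundamental theorem of calculus gives $g(1)-g(0)=\int_0^1\phi(s)\,ds$, which is precisely \eqref{difderk-1}. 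The only mildly delicate point is the bounded-right-derivative-implies-Lipschitz lemma; everything else reduces to the scaling identity $m_{s+h}=m_s+\frac{h}{1-s}(m'-m_s)$ together with one invocation of the definition \eqref{eq de higher order deriv}.
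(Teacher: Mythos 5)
Your proposal is correct and follows essentially the same route as the paper's proof: identify the one-sided derivatives of $s\mapsto\frac{\delta^{j-1}U}{\delta m^{j-1}}(m_s,\mathbf{y})$ via the scaling identity $m_{s+h}=m_s+\frac{h}{1-s}(m'-m_s)$ and the definition of the linear functional derivative, bound them uniformly using the growth hypothesis and $m,m'\in\cP_\ell(\R^d)$, deduce Lipschitz continuity, and conclude by the fundamental theorem of calculus (the paper invokes the theorem of Walker where you invoke absolute continuity of Lipschitz functions; both are fine). One small caution: the auxiliary lemma as you state it --- ``any function on $[0,1]$ with right-derivative bounded by $L$ at every point of $[0,1)$ is $L$-Lipschitz'' --- is false without a continuity hypothesis (a step function has right-derivative $0$ everywhere), but your $g$ is continuous on $[0,1]$ precisely because both one-sided derivatives exist at interior points and the appropriate one-sided derivatives exist at the endpoints, so the Dini-type argument does apply and the proof goes through.
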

One easily deduces the following corollary.
\begin{corollary}\label{cordiffk-1}
If $U\in\cS_{j, k}(\cP_\ell(\bR^d))$ with $0\le k\le \ell$, then \eqref{difderk-1} holds for all $(m,m',\mathbf{y})\in \cP_\ell(\bR^d)\times\cP_\ell(\bR^d)\times (\R^d)^{j-1}$.
\end{corollary}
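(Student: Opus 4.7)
The plan is to verify the two hypotheses of Theorem \ref{lin thm} for every $(m,m',\mathbf{y})\in\cP_\ell(\bR^d)\times\cP_\ell(\bR^d)\times(\bR^d)^{j-1}$, namely (i) existence of the $j$-th order linear functional derivative along the segment $m_s:=(1-s)m+sm'$ for $s\in[0,1]$, and (ii) finiteness of $\sup_{(s,y)\in[0,1]\times\bR^d}|\frac{\delta^{j}U}{\delta m^{j}}(m_s,\mathbf{y},y)|/(1+|y|^\ell)$. Once both are in place, Theorem \ref{lin thm} delivers \eqref{difderk-1} directly.

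For (i), the very definition of $\cS_{j,k}(\cP_\ell(\bR^d))$ asserts that $\frac{\delta^{j}U}{\delta m^{j}}$ exists on the whole of $\cP_\ell(\bR^d)\times(\bR^d)^{j}$, so its existence along the segment is automatic. Since $\cP_\ell(\bR^d)$ is convex, $m_s\in\cP_\ell(\bR^d)$ for each $s\in[0,1]$, and by linearity of the integral,
\begin{equation*}
\int_{\bR^d}|x|^{\ell}\,m_s(dx)\le\int_{\bR^d}|x|^{\ell}\,m(dx)+\int_{\bR^d}|x|^{\ell}\,m'(dx)=:M<\infty.
\end{equation*}

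For (ii), the polynomial growth bound packaged into $\cS_{j,k}(\cP_\ell(\bR^d))$ gives, for all $(s,y)\in[0,1]\times\bR^d$,
\begin{equation*}
\Bigl|\frac{\delta^{j}U}{\delta m^{j}}(m_s,\mathbf{y},y)\Bigr|\le C\Bigl(1+\sum_{i=1}^{j-1}|y_i|^{k}+|y|^{k}+1_{\{\ell>0\}}M^{k/\ell}\Bigr),
\end{equation*}
and the hypothesis $0\le k\le \ell$ yields $|y|^{k}\le 1+|y|^{\ell}$, so $|y|^{k}/(1+|y|^{\ell})$ is uniformly bounded on $\bR^d$. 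All other terms on the right-hand side are constants in $(s,y)$, so dividing through by $1+|y|^\ell$ produces a finite supremum, as required.

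There is no genuine obstacle to overcome here: the corollary essentially records that the growth condition defining $\cS_{j,k}(\cP_\ell(\bR^d))$ when $k\le \ell$ is exactly what is needed to feed the $\cP_\ell$-valued segment into Theorem \ref{lin thm}, and the $(s,\mathbf{y})$-independence of the bound above is what promotes the theorem's ``at a fixed $\mathbf{y}$'' conclusion into a statement valid for every admissible triple $(m,m',\mathbf{y})$.
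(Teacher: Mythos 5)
Your proposal is correct and follows exactly the route the paper intends: the paper gives no explicit proof of Corollary \ref{cordiffk-1} beyond the remark that it is ``easily deduced'' from Theorem \ref{lin thm}, and your verification that the $\cS_{j,k}$ growth bound together with $k\le\ell$ and the boundedness of $\int_{\R^d}|x|^\ell m_s(dx)$ along the segment yields the required uniform-in-$(s,y)$ estimate is precisely that deduction.
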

\begin{proof}[Proof of Theorem \ref{lin thm}]
For simplicity of notations, the proof is presented for $j=1$. The argument for other values of $j$ is identical. For $s\in (0,1)$ and $0<h<s\wedge (1-s)$, by the definition of linear derivatives,
\begin{align*}
    \frac{U(m_{s+h})-U(m_s)}{h}&=\frac{1}{1-s}\times\frac{U(m_s+(h/(1-s))(m'-m_s))-U(m_s)}{h/(1-s)}\\&\stackrel{h\to 0^+}\longrightarrow\frac{1}{1-s}\int_{\bR^d}\frac{\delta U}{\delta m}
(m_s,y)(1-s)(m'-m)(dy)\\
\frac{U(m_{s-h})-U(m_s)}{h}&=\frac{1}{s}\times\frac{U(m_s+(h/s)(m-m_s))-U(m_s)}{h/s}\\&\stackrel{h\to 0^+}\longrightarrow\frac{1}{s}\times\int_{\bR^d}\frac{\delta U}{\delta m}
(m_s,y)s(m-m')(dy).\end{align*}
Hence $[0,1]\ni s\mapsto U(m_s)$ is differentiable on $(0,1)$ with derivative $g(s):=\int_{\bR^d}\frac{\delta U}{\delta m}
(m_s,y)(m'-m)(dy)$, admits the right-hand derivative $g(0)$ at $0$ and the left-hand derivative $g(1)$ at $1$. This function is therefore continuous on $[0,1]$. Since $m,m' \in  \cP_\ell(\bR^d)$ and $\sup_{(s,y)\in[0,1]\times\R^d}\left|\frac{\delta U}{\delta m}(m_s,y)\right|/(1+|y|^\ell)<\infty$, the function $g$ is bounded on $[0,1]$. Therefore $[0,1]\ni s\mapsto U(m_s)$ is Lipschitz continuous. We last apply the (only) theorem in \cite{walker1977lebesgue} to deduce that
\begin{equation} \int_0^1 g(s) \, ds =   U \big( m_1\big) -  U \big( m_0\big) = U(m')- U(m). \label{fund thm right hand}  \end{equation}
\end{proof}
We now state a chain rule concerning the computation of linear functional derivatives. It is an easy consequence of the classical chain rule and the fact the normalisation convention \eqref{eq: normalisation linear functional deriatives} clearly holds.
\begin{theorem}[Chain rule]  \label{chain rule i}
Let $\ell\ge 0$, $\varphi:\cP_\ell(\bR^d) \to \bR^q$ be a function such that each of its coordinates admits a linear functional derivative at $\mu\in\cP_\ell(\bR^d)$. We denote by $\ld[\varphi](\mu,y)$ the vector in $\R^q$ with coordinates given by these linear functional derivatives. Let $F: \bR^q \to \bR$ be a function differentiable at $\varphi(\mu)$.  Then the function $U:\cP_\ell(\bR^d) \to \bR$ defined by
$U(\mu):= F(\varphi(\mu))$ admits a linear functional derivative at $\mu$ given by
$$ \frac{\delta U}{\delta m} (\mu,y) = \nabla F\big( \varphi(\mu) \big) .\ld[\varphi](\mu,y). $$ 
\end{theorem}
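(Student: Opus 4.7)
The plan is to verify the defining identity \eqref{eq de first order deriv} for $U = F\circ \varphi$ at $\mu$ by combining the componentwise right-differentiability of $\varepsilon\mapsto \varphi(\mu+\varepsilon(\nu-\mu))$ at $0^+$ with the differentiability of $F$ at $\varphi(\mu)$, and then read off the expression for $\frac{\delta U}{\delta m}(\mu,y)$.

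First, I would fix $\nu\in\cP_\ell(\bR^d)$, set $m_\varepsilon:=\mu+\varepsilon(\nu-\mu)$ for $\varepsilon\in[0,1]$, and apply \eqref{eq de first order deriv} to each coordinate $\varphi_i$ of $\varphi$ to obtain
$$
\frac{d}{d\varepsilon}\bigg|_{\varepsilon=0^+}\varphi_i(m_\varepsilon)=\int_{\bR^d}\frac{\delta \varphi_i}{\delta m}(\mu,y)\,(\nu-\mu)(dy).
$$
Setting $V:=\int_{\bR^d}\frac{\delta\varphi}{\delta m}(\mu,y)\,(\nu-\mu)(dy)\in\bR^q$, this gives the vector-valued first-order expansion $\varphi(m_\varepsilon)=\varphi(\mu)+\varepsilon V+o(\varepsilon)$ as $\varepsilon\to 0^+$, where the passage from $q$ scalar expansions to a single vector expansion is immediate because $q$ is finite.

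Next, I would invoke the differentiability of $F$ at $z_0:=\varphi(\mu)$, namely
$F(z)=F(z_0)+\nabla F(z_0)\cdot(z-z_0)+o(|z-z_0|)$ as $z\to z_0$, and substitute $z=\varphi(m_\varepsilon)$. Since $|\varphi(m_\varepsilon)-\varphi(\mu)|=O(\varepsilon)$ by the previous step, the $o(|z-z_0|)$ error is $o(\varepsilon)$, so dividing by $\varepsilon$ and sending $\varepsilon\to 0^+$ yields
$$
\frac{d}{d\varepsilon}\bigg|_{\varepsilon=0^+}U(m_\varepsilon)=\nabla F(\varphi(\mu))\cdot V=\int_{\bR^d}\nabla F(\varphi(\mu))\cdot\frac{\delta\varphi}{\delta m}(\mu,y)\,(\nu-\mu)(dy),
$$
identifying the candidate $\frac{\delta U}{\delta m}(\mu,y):=\nabla F(\varphi(\mu))\cdot\frac{\delta\varphi}{\delta m}(\mu,y)$.

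Finally, I would check the technical conditions for a linear functional derivative. Measurability in $y$ is immediate from the fact that each $y\mapsto\frac{\delta\varphi_i}{\delta m}(\mu,y)$ is measurable and $\nabla F(\varphi(\mu))$ is a constant in $y$. The growth bound $\sup_{y\in\bR^d}|\frac{\delta U}{\delta m}(\mu,y)|/(1+|y|^\ell)<\infty$ follows by Cauchy--Schwarz from the corresponding bounds satisfied by each coordinate of $\frac{\delta\varphi}{\delta m}(\mu,\cdot)$. The normalisation convention \eqref{eq: normalisation linear functional deriatives} at $y=0$ is inherited directly from the fact that $\frac{\delta\varphi_i}{\delta m}(\mu,0)=0$ for every $i$. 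No serious obstacle is anticipated here; the only mildly delicate point is invoking just Fréchet-type differentiability of $F$ at the single point $\varphi(\mu)$ (rather than $C^1$ regularity), which is handled cleanly by the $o(\varepsilon)$ bookkeeping above.
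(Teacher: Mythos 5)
Your proof is correct and follows exactly the route the paper intends: the paper gives no detailed argument, stating only that the theorem ``is an easy consequence of the classical chain rule and the fact the normalisation convention \eqref{eq: normalisation linear functional deriatives} clearly holds'', and your write-up simply fills in those details (the first-order expansion of $\varepsilon\mapsto\varphi(\mu+\varepsilon(\nu-\mu))$ at $0^+$, composition with the differentiability of $F$ at $\varphi(\mu)$, and the measurability, growth and normalisation checks).
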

The following example is an easy but important consequence of the chain rule and will be used in subsequent parts of the paper. 
\begin{example}[A differentiable function of a linear functional of measures]
\label{eq: simple example F G mu} 
Let $\ell\ge 0$, $G: \bR^d \to \bR$ be a measurable function such that
$$\sup_{x\in\R^d}\frac{|G(x)|}{1+|x|^\ell}<\infty$$ 
and let $F: \bR \to \bR$ be a $j$-times differentiable function. Let $U: \cP_{\ell}(\bR^d) \to \bR$ be defined by
$$ U(\mu):= F \bigg( \int_{\bR^d} G(x) \, \mu(dx)  \bigg).$$ 
Then, by Theorem \ref{chain rule i}, for $i \in \{1, \ldots, j\}$, the $i$th order linear functional derivative is given by
$$ \frac{\delta^i U}{\delta m^i} (\mu, y_1, \ldots, y_k) = F^{(i)} \bigg( \int_{\bR^d} G(x) \, \mu(dx) \bigg) \prod_{i'=1}^i (G(y_{i'})-G(0)).$$ 
Suppose that there exist constants $C>0$ and $k_i \geq 0$, $i\in \{1, \ldots, j\}$, such that
$$ |F^{(i)}(y)| \leq C(1+ |y|^{k_i}) , \quad \quad y \in \bR.$$
Then it can be checked by Young's inequality that 
$$ U \in \cS_{j, \ell\max_{1\le i\le j}\{k_i+i\}} (\cP_{\ell}(\bR^d)).$$ 
\end{example}
\begin{example}[U-statistics (see \cite{hoeffding1992class} or \cite{lee2019u}) and polynomials on the Wasserstein space] \label{exustat}
Let $k\ge 0$, $n\in{\mathbb N}$, $\varphi:(\bR^d)^n\to\R$ be measurable and such that
$$\exists C<\infty,\;\forall x_1, \ldots x_n \in \bR^d,\;\big| \varphi(x_1, \ldots, x_n) \big| \leq C(1+ |x_1|^k + \ldots + |x_n|^k).$$ 
For $\ell\ge k$, we consider the function on $\cP_{\ell} (\bR^d)$ defined by
$$ U(\mu):=  \intrd \ldots \intrd \varphi(x_1, \ldots, x_n)  \, \mu(dx_n) \ldots \, \mu(dx_1).$$
Since replacing $\varphi$ by its symmetrisation does not change the above integral, we suppose without loss of generality that $(x_1,\ldots,x_n)\mapsto \varphi(x_1,\ldots,x_n)$ is symmetric i.e. invariant by permutation of the coordinates $x_i$.
For $\mu,\nu\in \cP_\ell(\bR^d)$ and $\varepsilon\in(0,1]$, we have, denoting by $|{\mathcal N}|$ the cardinality of a subset ${\mathcal N}$ of $\{1,\ldots,n\}$,
\begin{align*}
    \frac{1}{\varepsilon}\left(U(\mu+\varepsilon(\nu-\mu))-U(\mu)\right)&=\sum_{{\mathcal N}\subset\{1,\hdots,n\}:|{\mathcal N}|\ge 1}\varepsilon^{|{\mathcal N}|-1}\int_{(\bR^d)^n}\varphi(x_1, \ldots, x_n)\bigotimes_{i\in{\mathcal N}}(\nu-\mu)(dx_i)\bigotimes_{i\in\{1,\hdots,n\}\setminus{\mathcal N}}\mu(dx_i)\\
    &\stackrel{\varepsilon\to 0^{+}}{\longrightarrow}\sum_{j=1}^n\int_{(\bR^d)^n}\varphi(x_1, \ldots, x_n)(\nu-\mu)(dx_j)\bigotimes_{i\in\{1,\hdots,n\}\setminus\{j\}}\mu(dx_i)\\
    &=\int_{\R^d}n\int_{(\R^d)^{n-1}}\varphi(y,x_2, \ldots, x_{n})\mu(dx_{n})\ldots\mu(dx_{2})(\nu-\mu)(dy),
\end{align*}
where we used the symmetry of $\varphi$ for the last equality.
Therefore $U\in\cS_{1,k}(\cP_{\ell} (\bR^d))$ with $$\ld[U](\mu,y)=n\int_{(\R^d)^{n-1}}\left(\varphi(y,x_2, \ldots, x_{n})-\varphi(0,x_2, \ldots, x_{n})\right)\mu(dx_{n})\ldots\mu(dx_{2}).$$
For $j\in\{1,\ldots,n\}$, let
$$d_j\varphi(y_1,\hdots,y_j,x_{j+1},\ldots,x_n)=\sum_{{\mathcal J}\subset\{1,\ldots,j\}}(-1)^{j-|{\mathcal J}|}\varphi(y_{\mathcal J},x_{j+1},\ldots,x_n)$$
where $y_{\mathcal J}$ denotes the vector in $(\R^d)^j$ with all coordinates with indices in ${\mathcal J}$ equal to those of $(y_1,\ldots,y_j)$ and all coordinates with indices in $\{1,\ldots,j\}\setminus{\mathcal J}$ equal to $0$. Notice that, for $i\in\{1,\hdots,j\}$, 
$$d_j\varphi(y_1,\hdots,y_j,x_{j+1},\ldots,x_n)=\sum_{{\mathcal J}\subset\{1,\ldots,j\}\setminus\{i\}}(-1)^{j-|{\mathcal J}|}\left(\varphi(y_{\mathcal J},x_{j+1},\ldots,x_n)-\varphi(y_{\mathcal J\cup\{i\}},x_{j+1},\ldots,x_n)\right)$$
and when $y_i=0$ then for each ${\mathcal J}\subset\{1,\ldots,j\}\setminus\{i\}$, $y_{\mathcal J}=y_{\mathcal J\cup\{i\}}$ so that $d_j\varphi(y_1,\hdots,y_j,x_{j+1},\ldots,x_n)=0$.
More generally, for each $j\in\N$, $U\in\cS_{j,k}(\cP_{\ell} (\bR^d))$ with
\begin{align*}
\frac{\delta^j U}{\delta m^j} (\mu,\mathbf{y},y )=\frac{n!}{(n-j)!}\int_{(\R^d)^{n-j}}d_j\varphi(\mathbf{y},y,x_{j+1}, \ldots, x_{n})\mu(dx_{n})\ldots\mu(dx_{j+1})\\
\end{align*}
when $j\le n$ and $0$ when $j>n$.

Let us suppose conversely that for some $\ell\ge 0$ and $n\ge 0$, $U\in\cS_{n+1,\ell}(\cP_{\ell} (\bR^d))$ with vanishing $\frac{\delta^{n+1} U}{\delta m^{n+1}}$. 
Then by Lemma 2.2 in \cite{chassagneux2019weak}, for $\mu,m\in\cP_{\ell} (\bR^d)$,
\begin{align*}
U(\mu) - U(m) =
\sum_{j=1}^{n}\frac1{j!} & \int_{(\bR^{d})^j} \frac{\delta^j U}{\delta m^j}(m,\mathbf{y}) \, ({\mu-m})^{\otimes j}( d \mathbf{y}) 
\\
&+ \frac1{n!}\int_0^1 (1-t)^{n} \int_{(\bR^{d})^{n+1}} \frac{\delta^{n+1} U}{\delta m^{n+1}}((1-t)m +t\mu,\mathbf{y}) \, (\mu-m)^{\otimes {(n+1)}}(d\mathbf{y}) \, dt.
\end{align*}
The assumption and the normalisation condition then give, for the choice $m=\delta_0$,
$$U(\mu)=U(\delta_0)+\sum_{j=1}^n\frac{1}{j!}\int_{(\R^d)^{j}}\frac{\delta^j U}{\delta m^j} (\delta_0,x_{1},\ldots,x_j)\mu(dx_j)\ldots \mu(dx_{1}).$$
 \begin{com}
Then we can check by backward induction that for $j\in\{1,\hdots,n\}$
\begin{align}
  \frac{\delta^j U}{\delta m^j} (\mu,y_1,\ldots,y_j)=&\frac{\delta^j U}{\delta m^j} (\delta_0,y_1,\ldots,y_j)\notag\\&+\sum_{i=j+1}^n\frac{1}{(i-j)!}\int_{(\R^d)^{i-j}}\frac{\delta^i U}{\delta m^i} (\delta_0,y_1,\ldots,y_{j},x_{j+1},\ldots,x_i)\mu(dx_i)\ldots \mu(dx_{j+1})\label{derku} 
\end{align}
and
$$U(\mu)=U(\delta_0)+\sum_{i=1}^n\frac{1}{i!}\int_{(\R^d)^{i}}\frac{\delta^i U}{\delta m^i} (\delta_0,x_{1},\ldots,x_i)\mu(dx_i)\ldots \mu(dx_{1}).$$
Indeed, by Theorem \ref{lin thm} and the normalisation condition \eqref{eq: normalisation linear functional deriatives},
\begin{align*}
    \frac{\delta^{j-1} U}{\delta m^{j-1}} (\mu,y_1,\ldots,y_{j-1})-\frac{\delta^{j-1} U}{\delta m^{j-1}} (\delta_0,y_1,\ldots,y_{j-1})
    =\int_0^1\int_{\R^d}\frac{\delta^{j} U}{\delta m^{j}} ((1-s)\delta_0+s\mu,y_1,\ldots,y_{j-1},x_j)\mu(dx_{j})ds.
\end{align*}
When $j=n+1$, the right-hand side is $0$ since $\frac{\delta^{n+1} U}{\delta m^{n+1}}$ vanishes. When \eqref{derku} holds, then, using again the normalisation condition \eqref{eq: normalisation linear functional deriatives}, we obtain that the right-hand side is equal to
\begin{align*}
    &\int_0^1\int_{\R^d}\bigg(\frac{\delta^j U}{\delta m^j} (\delta_0,y_1,\ldots,y_{j-1},x_j)\\&\phantom{\int_0^1\int_{\R^d}\bigg(}+\sum_{i=j+1}^n\frac{s^{i-j}}{(i-j)!}\int_{(\R^d)^{i-j}}\frac{\delta^i U}{\delta m^i} (\delta_0,y_1,\ldots,y_{j-1},x_{j},\ldots,x_i)\mu(dx_i)\ldots \mu(dx_{j+1})\bigg)\mu(dx_{j})ds\\&=\sum_{i=j}^n\frac{1}{(i-(j-1))!}\int_{(\R^d)^{i-(j-1)}}\frac{\delta^i U}{\delta m^i} (\delta_0,y_1,\ldots,y_{j-1},x_{j},\ldots,x_i)\mu(dx_i)\ldots \mu(dx_{j}).
\end{align*}  \makeatother
\end{com}
\end{example}

The following theorem generalizes Example \ref{exustat} by enabling a differentiable dependence of the integrand on the measure.
\begin{theorem} \label{chain rule ii}
Let $\ell\ge 0$, $\mu\in\cP_\ell(\bR^d)$ and $\varphi: (\bR^d)^n\times \cP_\ell(\bR^d) \to \bR$ be a function symmetric in its $n$ first variables such that
\begin{enumerate}[(i)]
\item for each $m\in\cP_\ell(\bR^d)$, $(\R^d)^n\ni(x_1,\ldots,x_n)\mapsto \varphi(x_1,\ldots,x_n,m)$ is measurable and integrable with respect to $m(dx_n)\ldots m(dx_1)$,
\item there exists a $W_\ell$-neighbourhood ${\mathcal N}_\mu$ of $\mu$ such that for each $(x_1,\ldots,x_n)\in(\R^d)^n$, $\cP_\ell(\bR^d)\ni m\mapsto \varphi(x_1,\ldots,x_n,m)$ admits a linear functional derivative $\ld[\varphi](x_1,\ldots,x_n,m,y)$ at $m$ for $m$ in ${\mathcal N}_\mu$ and
\begin{equation}\sup_{(m,x_1,\ldots,x_n,y)\in{\mathcal N}_\mu\times(\R^d)^{n+1}}\bigg(|\varphi(x_1,\ldots,x_n,m)|+\left|\ld[\varphi](x_1,\ldots,x_n,m,y)\right|\bigg)/(1+|x_1|^\ell+\ldots+|x_n|^\ell+|y|^\ell)<\infty.\label{quadgrowthchain2}\end{equation}
\end{enumerate}
Then the function $U:\cP_\ell(\bR^d) \to \bR$ defined by
$$ U(m):= \intrd \varphi(x_1,\ldots,x_n,m)  \, m(dx_n)\ldots m(dx_1) $$ admits a linear functional derivative at $\mu$ given by
\begin{align*}\frac{\delta U}{\delta m} (\mu,y) =& \int_{(\R^d)^{n}} \ld[\varphi](x_1,\ldots,x_n,\mu,y) +n\left(\varphi(y,x_2,\ldots,x_n,\mu) - \varphi(0,x_2,\ldots,x_n,\mu)\right)\, \mu(dx_n)\ldots\mu(dx_1) .\end{align*}
\end{theorem}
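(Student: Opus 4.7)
Fix $\nu\in\cP_\ell(\bR^d)$ and set $m_\varepsilon:=\mu+\varepsilon(\nu-\mu)=(1-\varepsilon)\mu+\varepsilon\nu$. Since $W_\ell(\mu,m_\varepsilon)\le \varepsilon^{1\wedge 1/\ell}W_\ell(\mu,\nu)$, we have $m_\varepsilon\in{\mathcal N}_\mu$ for all $\varepsilon\in(0,\varepsilon_0]$ with $\varepsilon_0$ small enough. The plan is to split
\begin{align*}
U(m_\varepsilon)-U(\mu)=\underbrace{\int_{(\R^d)^n}\!\!\big[\varphi(\mathbf{x},m_\varepsilon)-\varphi(\mathbf{x},\mu)\big]\,m_\varepsilon^{\otimes n}(d\mathbf{x})}_{=:A_\varepsilon}+\underbrace{\int_{(\R^d)^n}\!\!\varphi(\mathbf{x},\mu)\big[m_\varepsilon^{\otimes n}-\mu^{\otimes n}\big](d\mathbf{x})}_{=:B_\varepsilon},
\end{align*}
compute $\lim_{\varepsilon\to 0^+}A_\varepsilon/\varepsilon$ using Theorem \ref{lin thm} (the integrand term), and $\lim_{\varepsilon\to 0^+}B_\varepsilon/\varepsilon$ using multilinear expansion of $m_\varepsilon^{\otimes n}$ (the measure term).

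For $A_\varepsilon$, Theorem \ref{lin thm} applied to $m\mapsto \varphi(\mathbf{x},m)$ on the segment joining $\mu$ to $m_\varepsilon$ (which lies in ${\mathcal N}_\mu$) yields
$$\varphi(\mathbf{x},m_\varepsilon)-\varphi(\mathbf{x},\mu)=\varepsilon\int_0^1\int_{\R^d}\ld[\varphi]\bigl(\mathbf{x},\mu+s\varepsilon(\nu-\mu),y\bigr)(\nu-\mu)(dy)\,ds.$$
Dividing by $\varepsilon$ and invoking dominated convergence, where the uniform bound \eqref{quadgrowthchain2} together with the $\ell$-moment finiteness of $\mu$ and $\nu$ provides the integrable majorant $C(1+\sum_{i=1}^n|x_i|^\ell+|y|^\ell)$ against $m_\varepsilon^{\otimes n}\otimes(|\nu-\mu|)\otimes ds$, gives
$$\lim_{\varepsilon\to 0^+}\frac{A_\varepsilon}{\varepsilon}=\int_{(\R^d)^n}\int_{\R^d}\ld[\varphi](\mathbf{x},\mu,y)(\nu-\mu)(dy)\,\mu^{\otimes n}(d\mathbf{x}).$$

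For $B_\varepsilon$, writing $m_\varepsilon^{\otimes n}=\sum_{{\mathcal N}\subset\{1,\ldots,n\}}\varepsilon^{|{\mathcal N}|}\bigotimes_{i\in{\mathcal N}}(\nu-\mu)(dx_i)\bigotimes_{i\notin{\mathcal N}}\mu(dx_i)$, only the terms with $|{\mathcal N}|=1$ contribute in the limit $\varepsilon\to 0^+$ (higher-order terms are $O(\varepsilon^2)$, as each $(\nu-\mu)$-integration stays bounded under the $\ell$-moment assumption and the $\ell$-growth of $\varphi(\cdot,\mu)$). By symmetry of $\varphi$ in its first $n$ arguments, these $n$ summands are all equal, so
$$\lim_{\varepsilon\to 0^+}\frac{B_\varepsilon}{\varepsilon}=n\int_{\R^d}\int_{(\R^d)^{n-1}}\varphi(y,x_2,\ldots,x_n,\mu)\,\mu(dx_2)\ldots\mu(dx_n)\,(\nu-\mu)(dy).$$
Summing the two limits yields a valid linear functional derivative. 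The final step is to enforce the normalisation \eqref{eq: normalisation linear functional deriatives}: since $\int_{\R^d}(\nu-\mu)(dy)=0$, subtracting inside the $n$-factor the constant $\int_{(\R^d)^{n-1}}\varphi(0,x_2,\ldots,x_n,\mu)\mu(dx_2)\ldots\mu(dx_n)$ does not alter the value of the Gateaux derivative but makes $\frac{\delta U}{\delta m}(\mu,0)=0$ (using also the normalisation of $\ld[\varphi]$ in its last variable). This produces exactly the announced formula.

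The only delicate point is the dominated-convergence argument for $A_\varepsilon/\varepsilon$: one needs to know that $\ld[\varphi](\mathbf{x},m_{s\varepsilon},y)\to \ld[\varphi](\mathbf{x},\mu,y)$ pointwise, which is what allows passage to the limit, while the uniform bound \eqref{quadgrowthchain2} supplies the majorant and the continuity of $m\mapsto \ld[\varphi](\mathbf{x},m,y)$ at $\mu$ (needed for pointwise convergence as $\varepsilon\downarrow 0$) is implicit in the Gateaux definition—in practice one uses the integral representation of Theorem \ref{lin thm} applied once more (or continuity assumed in the hypothesis) to close this gap.
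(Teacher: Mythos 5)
Your argument is essentially the paper's: the same multilinear expansion of the product measure, Theorem \ref{lin thm} for the dependence on the measure argument, dominated convergence, Fubini plus symmetry, and the $(\nu-\mu)(\R^d)=0$ trick to install the normalisation. The only organizational difference is where you add and subtract: the paper writes the slope as $\int\frac{1}{\varepsilon}[\varphi(\mathbf{x},m_\varepsilon)-\varphi(\mathbf{x},\mu)]\,\mu^{\otimes n}(d\mathbf{x})$ plus cross terms carrying $\varphi(\cdot,m_\varepsilon)$, whereas your $A_\varepsilon$ integrates the difference quotient against $m_\varepsilon^{\otimes n}$; this costs you one extra line (the contribution of $m_\varepsilon^{\otimes n}-\mu^{\otimes n}$ against a uniformly $\ell$-growing integrand is $O(\varepsilon)$), or you can simply move the $\mathcal{N}=\emptyset$ term of the expansion into $A_\varepsilon$ as the paper does. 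More importantly, the ``delicate point'' you flag at the end is not actually a gap: you do not need pointwise convergence of $\ld[\varphi](\mathbf{x},m_{s\varepsilon},y)$ to $\ld[\varphi](\mathbf{x},\mu,y)$, and continuity of $m\mapsto\ld[\varphi](\mathbf{x},m,y)$ is neither assumed nor available. The pointwise limit of the difference quotient $\frac{1}{\varepsilon}[\varphi(\mathbf{x},m_\varepsilon)-\varphi(\mathbf{x},\mu)]$ is exactly $\int_{\R^d}\ld[\varphi](\mathbf{x},\mu,y)(\nu-\mu)(dy)$ by the very definition \eqref{eq de first order deriv} of the linear functional derivative at $\mu$; the integral representation of Theorem \ref{lin thm} is invoked only to obtain the $\varepsilon$-uniform majorant $C(1+\sum_i|x_i|^\ell+\cdot)$ from \eqref{quadgrowthchain2}. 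With that reading your dominated-convergence step closes without any additional hypothesis, and the proof is complete.
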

\begin{proof}
Clearly, the normalisation convention \eqref{eq: normalisation linear functional deriatives} holds. The power $\ell$ growth condition in $y$ follows from \eqref{quadgrowthchain2}.
Let $\nu\in\cP_\ell(\bR^d)$. For $\varepsilon\in(0,1]$, denoting by $|{\mathcal N}|$ the cardinality of a subset ${\mathcal N}$ of $\{1,\ldots,n\}$ as in Example \ref{exustat}, we check that the slope $\frac{1}{\varepsilon}(U(\mu+\varepsilon(\nu-\mu))-U(\mu))$ is equal to
\begin{align}&\int_{(\bR^d)^n}\frac{1}{\varepsilon}\left(\varphi(x_1, \ldots, x_n,\mu+\varepsilon(\nu-\mu))-\varphi(x_1, \ldots, x_n,\mu)\right)\mu(dx_n)\ldots\mu(dx_1)\notag\\
   &+\sum_{j=1}^n\int_{(\bR^d)^n}\varphi(x_1, \ldots, x_n,\mu+\varepsilon(\nu-\mu))(\nu-\mu)(dx_j)\bigotimes_{i\in\{1,\hdots,n\}\setminus\{j\}}\mu(dx_i)\notag\\&+
  \varepsilon\sum_{{\mathcal N}\subset\{1,\hdots,n\}:|{\mathcal N}|\ge 2}\varepsilon^{|{\mathcal N}|-2}\int_{(\bR^d)^n}\varphi(x_1, \ldots, x_n,\mu+\varepsilon(\nu-\mu))\bigotimes_{i\in{\mathcal N}}(\nu-\mu)(dx_i)\bigotimes_{i\in\{1,\hdots,n\}\setminus{\mathcal N}}\mu(dx_i).\label{decompdercomp}
\end{align}
For $\varepsilon$ small enough so that $\forall s\in[0,1]$, $\mu+s\varepsilon(\nu-\mu)\in{\mathcal N}_\mu$, by Theorem \ref{lin thm}, 
\begin{equation}
   \frac{1}{\varepsilon}\left(\varphi(x_1, \ldots, x_n,\mu+\varepsilon(\nu-\mu))-\varphi(x_1, \ldots, x_n,\mu)\right)\label{tauxaccroiss}
\end{equation} is equal to $\int_0^1\int_{\R^d}\frac{\delta \varphi}{\delta m}(x_1,\ldots,x_n,\mu+s\varepsilon(\nu-\mu),y)(\nu-\mu)(dy)ds$ and has power $\ell$ growth in $(x_1,\ldots,x_n)$ uniformly in $\varepsilon$ according to \eqref{quadgrowthchain2}. Since \eqref{tauxaccroiss} converges to $\int_{\R^d}\ld[\varphi](x_1,\ldots,x_n,\mu,y)(\nu-\mu)(dy)$ when $\varepsilon\to 0^+$, Lebesgue's theorem ensures that the first term in \eqref{decompdercomp} goes to $$\int_{(\R^d)^{n}}\int_{\R^d} \ld[\varphi](x_1,\ldots,x_n,\mu,y) (\nu-\mu)(dy)\, \mu(dx_n)\ldots\mu(dx_1).$$ By Fubini's theorem, this limit is equal to $\int_{\R^d} \int_{(\R^d)^{n}} \ld[\varphi](x_1,\ldots,x_n,\mu,y) \mu(dx_n)\ldots\mu(dx_1)(\nu-\mu)(dy)$. By Theorem \ref{lin thm}, $\varphi(x_1, \ldots, x_n,\mu+\varepsilon(\nu-\mu))$ goes to $\varphi(x_1, \ldots, x_n,\mu)$ as $\varepsilon\to 0^+$. With the growth assumption \eqref{quadgrowthchain2}, we deduce by Lebesgue's theorem that the second term in \eqref{decompdercomp} goes to
$$\sum_{j=1}^n\int_{(\bR^d)^n}\varphi(x_1, \ldots, x_n,\mu)(\nu-\mu)(dx_j)\bigotimes_{i\in\{1,\hdots,n\}\setminus\{j\}}\mu(dx_i).$$ By Fubini's theorem, symmetry of $\varphi$ in its first $n$  variables and since $(\nu-\mu)(\R^d)=0$, this limit is equal to $$\int_{\R^d}\int_{(\bR^d)^{n-1}}n\left(\varphi(y,x_2 \ldots, x_n,\mu)-\varphi(0,x_2,\ldots,x_n,\mu)\right)\mu(dx_n)\ldots\mu(dx_2)(\nu-\mu)(dy),$$
which concludes the proof.\end{proof}
The following theorem is similar to Theorem \ref{chain rule ii}, but the measure in the integral is not necessarily the same as the measure in the argument of the function $U$.
\begin{theorem}[Integral w.r.t. a different measure]\label{thmintegdifmeas}
Let $\ell\ge 0$, $\mu\in\cP_\ell(\bR^d)$, $\lambda$ be a Borel measure on $\bR^d$
and $\varphi: \bR^d \times \cP_\ell(\bR^d) \to \bR$ be a function such that
\begin{enumerate}[(i)]
\item for each $m\in\cP_\ell(\bR^d)$, $\R^d\ni x \mapsto \varphi(x,m)$ is Borel-measurable and integrable with respect to $\lambda$,
\item there exists a $W_\ell$-neighbourhood ${\mathcal N}_\mu$ of $\mu$ such that for each $x \in \bR^d$, $\cP_\ell(\bR^d)\ni m \mapsto \varphi(x,m)$ admits a linear functional derivative in ${\mathcal N}_\mu$ and there exists a nonnegative Borel-measurable function $C: \bR^d \to \bR$ such that
\begin{equation}\intrd C(x) \,\lambda(dx) < +\infty\mbox{ and }\sup_{(m,x,y)\in{\mathcal N}_\mu\times(\R^d)^{2}} \frac{ \big|\ld[\varphi](x,m,y) \big|}{C(x)(1+|y|^{\ell})}<\infty. \label{hypmajlam}\end{equation}
\end{enumerate}
Let $U: \cP_{\ell}(\bR^d) \to \bR$ be defined by
$$ U(m):= \intrd \varphi(x, m) \, \lambda(dx).$$ Then $U$ admits a linear functional derivative at $\mu$ given by 
$$ \ld[U](\mu,y) = \intrd \ld[\varphi](x, \mu,y) \, \lambda(dx).$$ 
\end{theorem}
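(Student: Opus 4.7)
The plan is to imitate the strategy used in the proof of Theorem \ref{chain rule ii}: start from the difference quotient $\frac{1}{\varepsilon}(U(\mu+\varepsilon(\nu-\mu))-U(\mu))$ for $\nu\in\cP_\ell(\bR^d)$, use Theorem \ref{lin thm} applied to $m\mapsto\varphi(x,m)$ on the segment $(\mu+s\varepsilon(\nu-\mu))_{s\in[0,1]}$ to rewrite the integrand as an $s$-integral of the linear functional derivative in the measure argument of $\varphi$, and then pass to the limit $\varepsilon\to 0^+$ by dominated convergence, swapping the order of integration via Fubini at the end to identify $\ld[U](\mu,y)$.

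In detail, I would fix $\nu\in\cP_\ell(\bR^d)$ and choose $\varepsilon_0>0$ so small that $\mu+s\varepsilon(\nu-\mu)\in{\mathcal N}_\mu$ for all $s\in[0,1]$ and $\varepsilon\in(0,\varepsilon_0]$; this is possible since $W_\ell(\mu,\mu+s\varepsilon(\nu-\mu))\le(s\varepsilon)^{1\wedge 1/\ell}W_\ell(\mu,\nu)$, as already noticed in the paper. For every $x\in\bR^d$, the growth assumption \eqref{hypmajlam} combined with the fact that $\mu,\nu\in\cP_\ell(\bR^d)$ ensures that $\sup_{s\in[0,1]}\int_{\bR^d}|\ld[\varphi](x,\mu+s\varepsilon(\nu-\mu),y)|\,(\mu+\nu)(dy)<\infty$, so Theorem \ref{lin thm} yields
\begin{equation*}
\frac{\varphi(x,\mu+\varepsilon(\nu-\mu))-\varphi(x,\mu)}{\varepsilon}=\int_0^1\!\!\int_{\bR^d}\ld[\varphi](x,\mu+s\varepsilon(\nu-\mu),y)\,(\nu-\mu)(dy)\,ds,
\end{equation*}
whose absolute value is bounded, uniformly in $\varepsilon\in(0,\varepsilon_0]$, by $KC(x)\bigl(2+\int_{\bR^d}|y|^\ell(\mu+\nu)(dy)\bigr)$ for some constant $K$ coming from \eqref{hypmajlam}. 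This majorant is $\lambda$-integrable by the first half of \eqref{hypmajlam} and the finiteness of the $\ell$-moments of $\mu$ and $\nu$.

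By definition of $\ld[\varphi](x,\mu,\cdot)$, for each fixed $x$ the difference quotient above converges as $\varepsilon\to 0^+$ to $\int_{\bR^d}\ld[\varphi](x,\mu,y)(\nu-\mu)(dy)$. Applying Lebesgue's dominated convergence theorem with the $\lambda$-integrable majorant identified in the previous paragraph then gives
\begin{equation*}
\lim_{\varepsilon\to 0^+}\frac{U(\mu+\varepsilon(\nu-\mu))-U(\mu)}{\varepsilon}=\int_{\bR^d}\!\!\int_{\bR^d}\ld[\varphi](x,\mu,y)(\nu-\mu)(dy)\,\lambda(dx),
\end{equation*}
and Fubini's theorem (again justified by \eqref{hypmajlam}, the $\ell$-moment bounds on $\mu,\nu$ and the integrability of $C$) lets me exchange the order of integration to recognise the right-hand side as $\int_{\bR^d}\ld[U](\mu,y)(\nu-\mu)(dy)$, with the candidate $\ld[U](\mu,y):=\int_{\bR^d}\ld[\varphi](x,\mu,y)\lambda(dx)$. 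The growth bound $\sup_{y\in\bR^d}|\ld[U](\mu,y)|/(1+|y|^\ell)<\infty$ and the normalisation $\ld[U](\mu,0)=0$ follow directly from \eqref{hypmajlam} and from the corresponding properties of $\ld[\varphi](x,\mu,\cdot)$. The main (and only) delicate point is to pick the right dominating function so that both Fubini and dominated convergence apply, and the assumptions of the theorem have been tailored precisely for this.
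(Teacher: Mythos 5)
Your proposal is correct and follows essentially the same route as the paper's own proof: rewrite the difference quotient via Theorem \ref{lin thm} as an $s$-integral of $\ld[\varphi]$ along the segment, dominate it using \eqref{hypmajlam} to pass to the limit under $\int\lambda(dx)$, and conclude with Fubini. You merely spell out the dominating function and the restriction of $\varepsilon$ keeping the segment inside ${\mathcal N}_\mu$ more explicitly than the paper does.
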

\begin{proof}
We have 
$$\lim_{\eps\to 0^+}\frac{1}{\eps}\big(\varphi(x, \mu+ \eps ( \nu- \mu))- \varphi(x, \mu) \big)=\intrd \ld[\varphi](x, \mu ,y) \, (\nu- \mu)(dy).$$
Since, by Theorem \ref{lin thm}, for $\eps>0$, $$\frac{1}{\eps}\big(\varphi(x, \mu+ \eps ( \nu- \mu))-  \varphi(x, \mu) \big)=\int_0^1 \intrd \ld[\varphi](x, \mu+ s \eps(\nu-\mu),y) \, (\nu- \mu)(dy) \,ds,$$ \eqref{hypmajlam} permits to apply the dominated convergence theorem and obtain
\begin{align*}
\lim_{\eps\to 0^+} \frac{1}{\eps}& \bigg[ \intrd \varphi(x, \mu+ \eps ( \nu- \mu)) \, \lambda(dx) - \intrd \varphi(x, \mu) \, \lambda(dx) \bigg]\\&=   \intrd \intrd \ld[\varphi](x, \mu ,y) \, (\nu- \mu)(dy) \, \lambda(dx) \\
& =  \intrd \intrd \ld[\varphi](x, \mu,y) \, \lambda(dx) \, (\nu-\mu)(dy). 
\end{align*}
\end{proof}
Let us finally consider, in dimension $d=1$, the example of the quantile function of $m$. 
\begin{example}[quantile function]\label{lemderquant}
Let for $w\in(0,1)$ and $m\in\cP_0(\bR)$, $U(w,m):=\inf\{x\in\bR:m((-\infty,x])\ge w\}$. 
Let $v\in(0,1)$, $m_0\in\cP_0(\bR)$ be such that the restriction of $m_0$ to a neighbourhood of $U(v,m_0)$ admits a positive and continuous density $p_0$ with respect to the Lebesgue measure. Let us check that for $\nu\in\cP_0(\bR)$ such that $\nu(\{U(v,m_0)\})=0$, $$\frac{d}{d \varepsilon} \bigg|_{\varepsilon = 0^{+}} U \big(v,m_0 + \varepsilon ( \nu - m_0) \big)=-\int_{\R} \frac{1_{\{y\le U(v,m_0)\}}}{p_0(U(v,\mu))}(\nu-m_0)(dy,$$
so that, in a generalized sense related to the restriction $\nu(\{U(v,m_0)\})=0$, $\frac{\delta U}{\delta m}(v,m_0,y)=-\frac{1_{\{y\le U(v,m_0)\}}}{p_0(U(v,m_0))}$. 

Let for $\varepsilon\in[0,1]$, $m_\varepsilon:=m_0+\varepsilon(\nu-m_0)$ and $x_\varepsilon:=U(v,m_\varepsilon)$. We have 
\begin{equation}\sup_{x\in\R}|m_\varepsilon((-\infty,x))-m_0((-\infty,x))|\vee|m_\varepsilon((-\infty,x])-m_0((-\infty,x])|\le \varepsilon.\label{estunif}\end{equation}
On the neighbourhood of $x_0=U(v,m_0)$ on which $m_0$ admits a positive and continuous density, $x\mapsto m_0((-\infty,x])$ is continuously differentiable with derivative $p_0(x)$. The image of the neighbourhood by this function is a neighbourhood of $v$, on which its inverse $w\mapsto U(w,m_0)$ is also continuously differentiable with derivative $\frac{1}{p_0(U(w,m_0))}$. By \eqref{estunif} and the definition of $x_\varepsilon$, $m_0((-\infty,x_\varepsilon))\le m_\varepsilon((-\infty,x_\varepsilon))+\varepsilon\le v+\varepsilon$ and $m_0((-\infty,x_\varepsilon])\ge m_\varepsilon((-\infty,x_\varepsilon])-\varepsilon\ge v-\varepsilon$. Hence for $\varepsilon$ small enough, $m_0((-\infty,x_\varepsilon])$ and $m_0((-\infty,x_\varepsilon))$ are equal, belong to the neighbourhood of $v$ and $x_\varepsilon=U(m_0((-\infty,x_\varepsilon]),m_0)\in[U(v-\varepsilon,m_0),U(v+\varepsilon,m_0)]$ so that $\lim_{\varepsilon\to 0^+} x_\varepsilon=x_0$.

Since $m_\varepsilon((-\infty,x_\varepsilon))\le v$ and $w\mapsto U(w,m_0)$ is non-increasing, we have for $\varepsilon>0$ small enough so that $x_\varepsilon=U(m_0((-\infty,x_\varepsilon)),m_0)$
\begin{align}
    \frac{x_\varepsilon-x_0}{\varepsilon}&\le \frac{U(m_0((-\infty,x_\varepsilon)),m_0)-U(m_\varepsilon((-\infty,x_\varepsilon)), m_0)}{\varepsilon}\notag\\
    &=\frac{U(m_0((-\infty,x_\varepsilon)),m_0)-U(m_\varepsilon((-\infty,x_\varepsilon)), m_0)}{m_0((-\infty,x_\varepsilon))-m_\varepsilon((-\infty,x_\varepsilon))}(m_0-\nu)((-\infty,x_\varepsilon)),\label{majodiffin}
\end{align}
where, by convention, the first factor is equal to $\frac{\partial U}{\partial w}(v,m_0)=\frac{1}{p_0(U(v,m_0))}$ when $m_0((-\infty,x_\varepsilon))=m_\varepsilon((-\infty,x_\varepsilon))$ which is equivalent to $m_0((-\infty,x_\varepsilon))=\nu((-\infty,x_\varepsilon))$.
We have $\lim_{\varepsilon\to 0^+}m_0((-\infty,x_\varepsilon))=v$ and, by \eqref{estunif}, $\lim_{\varepsilon\to 0^+}m_\varepsilon((-\infty,x_\varepsilon))=v$. Hence, with the continuous differentiability of $w\mapsto U(w,m_0)$ in the neighbourhood of $v$, $$\lim_{\varepsilon\to 0^+}\frac{U(m_0((-\infty,x_\varepsilon)),m_0)-U(m_\varepsilon((-\infty,x_\varepsilon)), m_0)}{m_0((-\infty,x_\varepsilon))-m_\varepsilon((-\infty,x_\varepsilon))}=\frac{1}{p_0(U(v,m_0))}.$$ Since $\nu(\{x_0\})=m_0(\{x_0\})=0$, we also have $\lim_{\varepsilon\to 0^+}(m_0-\nu)((-\infty,x_\varepsilon))=(m_0-\nu)((-\infty,x_0])$ and the right-hand side of \eqref{majodiffin} converges to $\frac{(m_0-\nu)((-\infty,x_0])}{p_0(U(v,m_0))}$ as $\varepsilon\to 0^+$. 
We conclude by remarking that, since $m_\varepsilon((-\infty,x_\varepsilon])\ge v$,
$\frac{x_\varepsilon-x_0}{\varepsilon}\ge \frac{U(m_0((-\infty,x_\varepsilon]),m_0)-U(m_\varepsilon((-\infty,x_\varepsilon]), m_0)}{\varepsilon}$ where, by the same arguments, the right-hand side also converges to $\frac{(m_0-\nu)((-\infty,x_0])}{p_0(U(v,m_0))}$ as $\varepsilon\to 0^+$.

\end{example}

\section{Central limit theorem over nonlinear functionals of empirical measures} 
Let $\ell \geq 0$, $m_0\in \cP_{\ell}(\bR^d)$ and $m^N= \frac{1}{N} \sum_{i=1}^N \delta_{{\zeta}_i}$ where ${\zeta}_1, \ldots, {\zeta}_N$ are i.i.d. random variables with law $m_0$. For some nonlinear functionals $U$ on $\cP_{\ell}(\bR^d)$, we want to prove that $\sqrt{N}(U(m^N)-U(m_0))$ converges in law to some centered Gaussian random variable to generalise the result of the classical CLT which addresses linear functionals $U(\mu)= \int \varphi(x)\, \mu(dx)$ with $\varphi:\R^d\to\R$ measurable and such that $\sup_{x\in\R^d}|\varphi(x)|/(1+|x|^{\ell/2})<\infty$. Note that, by this growth assumption and Example \ref{exustat}, this linear functional belongs to $\cS_{1,  \ell/2}(\cP_{\ell}(\bR^d))$ with $\ld[U](m,x)=\varphi(x)$. For general functionals $U\in\cS_{1,  \ell/2}(\cP_{\ell}(\bR^d))$,  by the classical central limit theorem, $$\sqrt{N}  \bigg[ \intrd \ld[U](m_0,x) \, (m^N-m_0)(dx) \bigg] \stackrel{d}{\implies} \cN \bigg( 0, \text{Var}  \bigg( \frac{\delta U}{\delta m} (m_0, \zeta_1) \bigg) \bigg).$$
One could consider the same remainder \begin{equation}
   R_N:=U(m^N)-U(m_0)-\intrd \ld[U](m_0,x) \, (m^N-m_0)(dx)\label{defrn}
 \end{equation}
 as in the literature on Von Mises differentiable statistical functions  and check using a linearisation in measure by Theorem \ref{lin thm} that, under extra regularity assumptions on $U$, $\sqrt{N}R_N$ goes to $0$ in probability as $N\to\infty$. For instance,  when $U\in\cS_{4,4}(\cP_{2}(\bR^d))$ and $m_0\in\cP_{8}(\bR^d)$, Theorem 2.5 in \cite{szpruch2019antithetic} which is inspired by Lemma 5.10 in \cite{delarue2018master} ensures that $\E[R_N^2]\lesssim \frac{1}{N^2}$. In Theorem \ref{CLT measure} below, we will rather find weaker regularity assumptions under which 
 $$\sqrt{N}\bigg(\frac{1}{N}\sum_{i=1}^N\ld[U]\bigg(\frac{N+1-i}{N}m_0+\frac{1}{N}\sum_{j=1}^{i-1}\delta_{\zeta_j},\zeta_i\bigg)-\int_{\R^d}\ld[U]\bigg(\frac{N+1-i}{N}m_0+\frac{1}{N}\sum_{j=1}^{i-1}\delta_{\zeta_j},x\bigg)m_0(dx)\bigg)$$
 converges in distribution to $\cN \bigg( 0, \text{Var}  \bigg( \frac{\delta U}{\delta m} (m_0, \zeta_1) \bigg) \bigg)$ by the central limit theorem for martingale increments
 and the difference between this term and $\sqrt{N}(U(m^N)-U(m_0))$ goes to $0$ in probability as $N\to\infty$.
 
 Since the asymptotic variance is expressed in terms of $\ld[U]$, one can easily compute its value via Theorems \ref{chain rule i}, \ref{chain rule ii} and \ref{thmintegdifmeas}. 
For functionals $U$ which do not satisfy the regularity assumptions in Theorem \ref{CLT measure}, the asymptotic variance in the central limit theorem can still be given by $\text{Var}  \left( \frac{\delta U}{\delta m} (m_0, \zeta_1) \right)$. Indeed, for the example of the quantile function in dimension $d=1$, it is shown that under the assumptions of Theorem \ref{lemderquant}, $\sqrt{N}( U(v,m^N) - U(v,m_0))$ converges in distribution to a centered Gaussian random variable with variance $\frac{v(1-v)}{p_0^2(U(v,m_0))}$. Since $1_{\{\zeta_1\le U(v,m_0)\}}$ is a Bernoulli random variable with parameter $v$ and variance $v(1-v)$, $\text{Var}  \left( \frac{\delta U}{\delta m} (v,m_0, \zeta_1) \right)=\frac{v(1-v)}{p_0^2(U(v,m_0))}$.
\begin{theorem} \label{CLT measure}
  Let $\ell \geq 0$, $m_0\in \cP_{\ell}(\bR^d)$ and $m^N= \frac{1}{N} \sum_{i=1}^N \delta_{{\zeta}_i}$, where ${\zeta}_1, \ldots, {\zeta}_N$ are i.i.d. random variables with law $m_0$.
Let $D(\mu_2,\mu_1)$ denote the metric on $\cP_{\ell}(\bR^d)$ equal to $\int_{\R^d}(1+|y|^\ell)|\mu_2-\mu_1|(dy)$ if $m_0$ is discrete and otherwise to $1_{\{\ell>0\}}W_\ell(\mu_2,\mu_1)+1_{\{\ell=0\}}\underline{W}(\mu_2,\mu_1)$.

  Suppose that there exists $r>0$ such that
\begin{itemize}
\item $U$ admits a linear functional derivative on the ball $B(m_0,r)$ centered at $m_0$ with radius $r$ for the metric $D$,
  \item \begin{equation}
   \exists C<\infty,\;\forall(\mu,x)\in B(m_0,r)\times\R^d, \quad \quad \left|\ld[U](\mu, x)
    \right|\le C\left(1+|x|^{\ell/2}\right),\label{growthball}
  \end{equation}

   \item $\exists \alpha\in(1/2,1],\;\exists C<\infty,\;\forall \mu_1,\mu_2\in B(m_0,r),\;\forall x\in\R^d,$ 
\begin{equation} \bigg| \ld[U](\mu_2, x) - \ld[U](\mu_1, x) \bigg| \leq  C\bigg((1+|x|^\ell) W_0^\alpha(\mu_2,\mu_1)+(1+|x|^{\ell(1-\alpha)})\left(\int_{\R^d}|y|^\ell|\mu_2-\mu_1|(dy)\right)^\alpha\bigg), \label{eq: Lip TV W1} \end{equation} 
    \item
    \begin{equation}
    \sup_{x\in\R^d} \frac{ \Big|\ld[U](\mu, x)-\ld[U](m_0, x) \Big|}{1+|x|^{\ell/2}} \text{ converges to } 0 \text{ when } D(\mu,m_0) \text{ goes to } 0.  \label{cont in D} 
    \end{equation}
    \end{itemize}
Then the following convergence in distribution holds :
$$ \sqrt{N} \bigg ( U(m^N) - U(m_0) \bigg) \stackrel{d}{\implies} \cN \bigg( 0, \text{Var}  \bigg( \frac{\delta U}{\delta m} (m_0, \zeta_1) \bigg) \bigg) .$$ 
\end{theorem}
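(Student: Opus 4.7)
The strategy is to linearise $\sqrt{N}(U(m^N)-U(m_0))$ along a sequence of interpolating measures so that the leading term becomes a sum of $(\mathcal{F}_i)$-martingale increments, with $\mathcal{F}_i := \sigma(\zeta_1,\ldots,\zeta_i)$, to which a martingale-array CLT applies, while the remainder vanishes thanks to the H\"older continuity~\eqref{eq: Lip TV W1} with exponent $\alpha>1/2$.

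I would introduce, for $i\in\{1,\ldots,N\}$ and $s\in[0,1]$, the interpolation
$$m^{N,i}_s := \frac{N+1-s-i}{N}\,m_0 + \frac{1}{N}\sum_{j=1}^{i-1}\delta_{\zeta_j} + \frac{s}{N}\delta_{\zeta_i},$$
which satisfies $m^{N,1}_0=m_0$, $m^{N,N}_1=m^N$ and $m^{N,i}_1=m^{N,i+1}_0$, and moves along $s\mapsto m^{N,i}_s$ with constant velocity $\frac{1}{N}(\delta_{\zeta_i}-m_0)$. Telescoping, applying Theorem~\ref{lin thm} (with $j=1$) to each increment $U(m^{N,i}_1)-U(m^{N,i}_0)$, and adding/subtracting $\ld[U](m^{N,i}_0,\cdot)$ inside the $s$-integral yields
$$\sqrt{N}\,(U(m^N)-U(m_0)) = M_N + R_N,$$
with
$$M_N = \frac{1}{\sqrt N}\sum_{i=1}^N\bigg(\ld[U](m^{N,i}_0,\zeta_i) - \intrd \ld[U](m^{N,i}_0,y)\,m_0(dy)\bigg)$$
and
$$R_N = \frac{1}{\sqrt N}\sum_{i=1}^N\int_0^1\intrd \bigl[\ld[U](m^{N,i}_s,y)-\ld[U](m^{N,i}_0,y)\bigr](\delta_{\zeta_i}-m_0)(dy)\,ds.$$
Since $m^{N,i}_0$ is $\mathcal{F}_{i-1}$-measurable and $\zeta_i\sim m_0$ is independent of $\mathcal{F}_{i-1}$, each summand of $M_N$ is an $(\mathcal{F}_i)$-martingale increment. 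A preliminary law-of-large-numbers argument (in $W_\ell$ if $m_0$ is non-discrete, in total variation via Scheff\'e's lemma if $m_0$ is discrete) ensures the whole decomposition is valid on a high-probability event on which $m^{N,i}_s\in B(m_0,r)$ for every $i,s$, so that $\ld[U]$ is well defined along each path.

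The main technical obstacle, and the step which forces $\alpha>1/2$, is to show $R_N\to 0$ in probability. The key observation is that $m^{N,i}_s-m^{N,i}_0 = \frac{s}{N}(\delta_{\zeta_i}-m_0)$, hence $W_0(m^{N,i}_s,m^{N,i}_0)\le s/N$ and $\intrd|y|^\ell\,|m^{N,i}_s-m^{N,i}_0|(dy)\le \frac{s}{N}\bigl(|\zeta_i|^\ell+\intrd|y|^\ell\,m_0(dy)\bigr)$. Injecting these bounds into \eqref{eq: Lip TV W1}, then integrating against $|\delta_{\zeta_i}-m_0|$ (which introduces only polynomial factors in $|\zeta_i|$ handled by the $\ell$-moment of $m_0$) and summing over $i$, I expect to obtain an $L^1$ estimate $\E[|R_N|]\lesssim N^{1/2-\alpha}$ which vanishes precisely because $\alpha>1/2$. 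This captures the balance between the $N^{-\alpha}$ H\"older gain from the step of size $s/N$ and the $\sqrt{N}$ scaling.

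Finally, I would apply a martingale-array CLT to $M_N$. Its predictable quadratic variation equals $\frac{1}{N}\sum_{i=1}^N V(m^{N,i}_0)$, where $V(m):=\mathrm{Var}_{\zeta\sim m_0}(\ld[U](m,\zeta))$. The growth bound~\eqref{growthball} makes $V$ uniformly bounded on $B(m_0,r)$, while~\eqref{cont in D} makes $V$ continuous at $m_0$ in the metric~$D$; combining this with the fact that, by the law of large numbers, $\frac{1}{N}\#\{i:D(m^{N,i}_0,m_0)>\delta\}\to 0$ in probability for every $\delta>0$, one obtains $\frac{1}{N}\sum_{i=1}^N V(m^{N,i}_0)\to V(m_0)=\mathrm{Var}(\ld[U](m_0,\zeta_1))$ in probability. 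The Lindeberg condition follows from the same $\ell/2$-polynomial growth together with the $\ell$-moment assumption on $m_0$, giving uniform integrability of the squared increments. Hence $M_N\stackrel{d}{\implies}\cN(0,V(m_0))$, and Slutsky's lemma combined with $R_N\to 0$ in probability concludes the proof.
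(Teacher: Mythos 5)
Your proposal is correct and follows essentially the same route as the paper's proof: the same interpolation $m^{N,i}_s$, the same martingale/remainder decomposition (the paper handles the excursion outside $B(m_0,r)$ with a stopping time $I_N$ rather than a high-probability event, which is equivalent), the same bound $\E|R_N|\lesssim N^{1/2-\alpha}$ from the $s/N$ total-variation displacement, and the same martingale-array CLT with bracket convergence via \eqref{cont in D} and Lindeberg via \eqref{growthball}.
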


\begin{remark}Using a $W_0$-optimal coupling between $\mu_1$ and $\mu_2$, one easily checks (see for instance Theorem 6.15 \cite{villani2008optimal} when $\ell\ge 1$)  that  \begin{equation}
   W_\ell^{\ell\vee 1}(\mu_2,\mu_1)\le 2^{(\ell-1)\vee 0}W_0(\mu_1,\mu_2)\int_{\R^d}|y|^\ell|\mu_2-\mu_1|(dy)\le 2^{(\ell-1)\vee 0}\int_{\R^d}|y|^\ell|\mu_2-\mu_1|(dy).\label{majowlvarl}
 \end{equation}
Hence any ball with positive radius for the metric $1_{\{\ell>0\}}W_\ell(\mu_2,\mu_1)+1_{\{\ell=0\}}\underline{W}(\mu_2,\mu_1)$ contains a ball with positive radius for the metric $\int_{\R^d}(1+|y|^\ell)|\mu_2-\mu_1|(dy)$. Moreover \eqref{cont in D} is weaker for the latter choice of $D(\mu_1,\mu_2)$ than for the former so that the assumptions of the theorem are satisfied for the latter when they are satisfied for the former. Unfortunately, when $m_0$ is not discrete, then $\int_{\R^d}(1+|y|^\ell)|m^N-m_0|(dy)$ does not go to $0$ as $N\to\infty$. This explains why we restrict the choice $D(\mu_1,\mu_2)= \int_{\R^d}(1+|y|^\ell)|\mu_2-\mu_1|(dy)$ to the case when $m_0$ is discrete.


\end{remark}
\begin{proof}
For every $i\in\{1,\hdots,N\}$ and $s \in [0,1]$, let
\begin{equation}
   m^{N,i}_s:= \bigg( 1+ \frac{1-i-s}{N} \bigg) m_0 + \frac{1}{N} \sum_{j=1}^{i-1} \delta_{\zeta_j} + \frac{s}{N} \delta_{\zeta_i}.\label{defmnis}
 \end{equation}
 Notice that since $m_0\in\cP_{\ell}(\bR^d)$, the random measure $m^{N,i}_s$ also belongs to $\cP_{\ell}(\bR^d)$. We have $U(m^N)-U(m_0)=\sum_{i=1}^N(U(m^{N,i}_1)-U(m^{N,i}_0))$. To be able to write the difference $U(m^{N,i}_1)-U(m^{N,i}_0)$ in terms of the linear functional derivative $\ld[U]$, we are first going to check that $$\max_{1\le i\le N}\sup_{s\in[0,1]}D(m^{N,i}_s,m_0)$$ converges a.s. to $0$ as $N\to\infty$.

\noindent{\bf First step : a.s. uniform convergence of $m^{N,i}_s$ to $m_0$}\\
Since for $s\in[0,1]$, $m^{N,i}_s=sm^{N,i}_1+(1-s)m^{N,i-1}_1$ under the convention $m^{N,0}_1=m_0$, we have \begin{equation*}W_\ell^{\ell\vee 1}(m^{N,i}_s,m_0)\le sW_\ell^{\ell\vee 1}(m^{N,i}_1,m_0)+(1-s)W_\ell^{\ell\vee 1}(m^{N,i-1}_1,m_0)\le W_\ell^{\ell\vee 1}(m^{N,i}_1,m_0)\vee W_\ell^{\ell\vee 1}(m^{N,i-1}_1,m_0).\label{majinterpol}\end{equation*}
Dealing in the same way with $\underline W$, we deduce that
\begin{eqnarray}
  &&  \max_{1\le i\le N}\sup_{s\in[0,1]}\left(1_{\{\ell>0\}}W_\ell(m^{N,i}_s,m_0)+1_{\{\ell=0\}}{\underline W}(m^{N,i}_s,m_0)\right) \nonumber \\
  & = & \max_{1\le i\le N}\left(1_{\{\ell>0\}}W_\ell(m^{N,i}_1,m_0)+1_{\{\ell=0\}}{\underline W}(m^{N,i}_1,m_0)\right).\label{majoentredeux}
\end{eqnarray}
Since a.s. $m^N$ converges weakly to $m_0$ and $\int_{\R^d}|x|^\ell m^N(dx)$ goes to $\int_{\R^d}|x|^\ell m_0(dx)$ as $N\to\infty$, for $\ell>0$, the sequence $W_\ell(m^N,m_0)$ converges a.s. to $0$ as $N\to\infty$ and is therefore a.s. bounded.
Moreover,   $W_\ell(m^{N,i}_1,m_0)\le (i/N)^{1\wedge 1/\ell}W_\ell(m^i,m_0)$. For $\alpha\in(0,1)$, by considering the two cases $i/N\le \alpha$ and $i/N>\alpha$, we deduce that $$\max_{1\le i\le N}W_\ell(m^{N,i}_1,m_0)\le \alpha^{1\wedge 1/\ell}\max_{j\ge 1}W_\ell(m^j,m_0)+\max_{j\ge \lceil \alpha N\rceil}W_\ell(m^j,m_0).$$ Choosing small values of $\alpha$ followed by large values of $N$, we conclude that $\max_{1\le i\le N}W_\ell(m^{N,i}_1,m_0)$ goes to $0$  a.s.  as $N\to\infty$.

By Corollary 6.13 \cite{villani2008optimal}, $\underline W$ metricises the topology of weak convergence on ${\cP}_0(\R^d)$ and therefore $\underline W(m^N,m_0)$ converges a.s. to $0$ as $N\to\infty$. Moreover, since $|x-y|\wedge 1\le 1_{\{x\neq y\}}$, $\underline W\le W_0$ and
$\underline W(m^{N,i}_1,m_0)\le W_0(m^{N,i}_1,m_0)\le \frac{i}{N}$. 
By adapting to $\underline{W}$, as well as the above reasoning for $W_\ell$, we deduce that $\max_{1\le i\le N}\underline W(m^{N,i}_1,m_0)$ goes to $0$  a.s.  as $N\to\infty$. 

Let us now assume that $m_0$ is discrete, i.e. there is a sequence $(y_k)_{1\le k\le{\mathcal K}}$ with ${\mathcal K}\in\N^*\cup\{+\infty\}$ of distinct elements of $\R^d$ such that $\sum_{k=1}^{\mathcal K}m_0(\{y_j\})=1$. Then, by the strong law of large numbers, a.s. for each $1\le k\le {\mathcal K}$, $m^N(\{y_k\})=\frac{1}{N}\sum_{i=1}^N1_{\{\zeta_i=y_k\}}$ converges to $m_0(\{y_k\})$. When ${\mathcal K}$ is finite, we deduce that $\int_{\R^d}(1+|y|^\ell)|m^N-m_0|(dy)=\sum_{k=1}^{\mathcal K}(1+|y_k|^\ell)|m^N(\{y_k\})-m_0(\{y_k\})|$ converges to $0$ a.s. as $N\to\infty$. When ${\mathcal K}$ is infinite, we have, for each $\bar{k}\in{\mathbb N}^*$,
  \begin{align*}
   \int_{\R^d}(1+|y|^\ell)&|m^N-m_0|(dy)\le \sum_{k=1}^{\bar{k}}(1+|y_k|^\ell)|m^N(\{y_k\})-m_0(\{y_k\})|\\&+\bigg|\frac{1}{N}\sum_{i=1}^N(1+|\zeta_i|^\ell)1_{\{y_{\bar k+1},y_{\bar k+2},...\}}(\zeta_i)-\sum_{k>\bar{k}}(1+|y_k|^\ell)m_0(\{y_k\})\bigg|+2\sum_{k>\bar{k}}(1+|y_k|^\ell)m_0(\{y_k\}).
  \end{align*}
  The third term of the right-hand side is arbitrarily small for $\bar{k}$ large enough, whereas for fixed $\bar{k}$, by the strong law of large numbers, the sum of the two first terms converges a.s. to $0$ as $N\to\infty$. Hence, $\int_{\R^d}(1+|y|^\ell)|m^N-m_0|(dy)$ goes a.s. to $0$ as $N\to \infty$. Since $\int_{\R^d}(1+|y|^\ell)|m^{N,i}_1-m_0|(dy)= \frac{i}{N}\int_{\R^d}(1+|y|^\ell)|m^{i}-m_0|(dy)$, by repeating the above reasoning performed for $W_\ell$, we obtain that $\max_{1\le i\le N}\int_{\R^d}(1+|y|^\ell)|m^{N,i}_1-m_0|(dy)$ converges a.s. to $0$. Since for $s\in[0,1]$, $|m^{N,i}_s-m_0|\le s|m^{N,i}_1-m_0|+(1-s)|m^{N,i}_1-m_0|$, we conclude that $\max_{1\le i\le N}\sup_{s\in[0,1]}\int_{\R^d}(1+|y|^\ell)|m^{N,i}_s-m_0|(dy)=\max_{1\le i\le N}\int_{\R^d}(1+|y|^\ell)|m^{N,i}_1-m_0|(dy)$ converges a.s. to $0$ as $N\to\infty$.

\noindent{\bf Second step : introduction of the linear functional derivative}\\
 Under the convention $\min\emptyset:=N+1$, we deduce that for the radius $r>0$ of the ball introduced in the hypotheses of the theorem, \begin{equation}
   I_N:=\min\{1 \leq i \leq N :\exists s\in[0,1]:D(m^{N,i}_s,m_0)\ge r\}\label{defIN}
 \end{equation} is almost surely $N+1$ for each $N \geq N^{*}$, for some random variable $N^{*}$ taking integer values. For $N\ge N^*$, we have, using \eqref{growthball} and Theorem \ref{lin thm} for the second equality,
 \begin{eqnarray}
U(m^N) - U(m_0)
& = & \sum_{i=1}^N \left[ U \left( m^{N,i}_1\right) - U \left( m^{N,i}_0 \right)  \right] \nonumber \\
& = & \frac{1}{N} \sum_{i=1}^N \int_0^1 \intrd \ld[U] (m^{N,i}_s,y) \, ( \delta_{\zeta_i} - 
m_0)(dy) \, ds. \nonumber 
\end{eqnarray}
Setting 
 \begin{equation} Q_N:= \frac{1}{N} \sum_{i=1}^N \left(\intrd \ld[U](m^{N,i\wedge I_N}_0,\zeta_i)-\intrd \ld[U](m^{N,i\wedge I_N}_0,x) m_0(dx)\right), \label{defQn}
 \end{equation}
 we deduce that for $N\ge N^*$, $U(m^N)-U(m_0)-Q_N$ coincides with 
 \begin{equation} R_N:=\frac{1_{\{N\ge N^*\}}}{N} \sum_{i=1}^N \int_0^1 \intrd \left(\ld[U] (m^{N,i}_s,y)-\ld[U] (m^{N,i}_0,y)\right) \, ( \delta_{\zeta_i} - m_0)(dy) \, ds. \label{defsn}
 \end{equation}
 Therefore to check that $\sqrt{N}(U(m^N)-U(m_0)-Q_N)$  goes to $0$ in probability as $N\to\infty$, it is enough to check that so does $\sqrt{N}R_N$, which is the purpose of the third step of the proof.  By Slutsky's theorem, to complete the proof,  it is enough to check that $ \sqrt{N} Q_N  \stackrel{d}{\implies} \cN \bigg( 0, \text{Var}  \bigg( \frac{\delta U}{\delta m} (m_0, \zeta_1) \bigg) \bigg)$, which is done in the fourth step using the Central Limit Theorem for arrays of martingale increments.
 
 \noindent{\bf Third step : convergence in probability of $\sqrt{N}R_N$ to $0$}\\
 Since $|m^{N,i}_s-m^{N,i}_0|(dy)\le \frac{s}{N}(\delta_{\zeta_i}+m_0)(dy)$, using \eqref{eq: Lip TV W1} and Young's inequality, we obtain that for $N\ge N^*$,
 \begin{align*}
   \left|\ld[U](m^{N,i}_{s} ,x)-\ld[U](m^{N,i}_{0} ,x)\right|&\le C\bigg((1+|x|^\ell)\left(\frac{2s}{N}\right)^\alpha+(1+|x|^{\ell(1-\alpha)})\left(\frac{s}{N}|\zeta_i|^\ell+\frac{s}{N}\int_{\R^d}|y|^\ell m_0(dy)\right)^\alpha\bigg)
   \\
   &\le \frac{C}{N^\alpha}\left((2^\alpha+1)(1+|x|^\ell)+2|\zeta_i|^\ell+2\int_{\R^d}|y|^\ell m_0(dy)\right).
 \end{align*}
Using that $m_0\in \cP_{\ell}(\bR^d)$, we easily deduce that $\bE|R_N|\lesssim N^{-\alpha}$ and since $\alpha>1/2$, $\lim_{N\to\infty}\bE\sqrt{N}|R_N|=0$.

\noindent{\bf Fourth step : application of the Central Limit Theorem for martingales}\\
Let us introduce the filtration $({\mathcal F}_{i}:=\sigma(\zeta_1,\hdots,\zeta_{i}))_{i\ge 1}$ for which $I_N$ defined in \eqref{defIN} is a stopping time.
By \eqref{growthball}, for $1\le i\le N$, the random variable $$X_{N,i}:=\frac{1}{\sqrt{N}}\left(\intrd \ld[U](m^{N,i\wedge I_N}_0,\zeta_i)-\intrd \ld[U](m^{N,i\wedge I_N}_0,x) m_0(dx)\right)$$ is square integrable. Since $m^{N,i\wedge I_N}_0=\sum_{j=1}^{i-1}1_{\{I_n=j\}}m^{N,j}_0+1_{\{I_N>i-1\}}m^{N,i}_0$ is ${\mathcal F}_{i-1}:=\sigma(\zeta_1,\hdots,\zeta_{i-1})$-measurable and $\zeta_i$ is independent of this sigma-field, we have $\bE\left[X_{N,i}|{\mathcal F}_i\right]=0$.
Moreover,
\begin{align*}
  \sum_{i=1}^N\bE\left[X^2_{N,i}|{\mathcal F}_{i-1}\right]&=\frac{1}{N}\sum_{i=1}^N\left(\intrd \left(\ld[U](m^{N,i\wedge I_N}_0,x)\right)^2m_0(dx)-\left(\intrd \ld[U](m^{N,i\wedge I_N}_0,x) m_0(dx)\right)^2\right).
\end{align*}
The convergence of $\sup_{x\in\R^d}\left|\ld[U](\mu, x)-\ld[U](m_0, x)\right|/(1+|x|^{\ell/2})$ to 0 when $D(\mu,m_0)$ goes to $0$ together with the a.s. convergence of $\max_{1\le i\le N}D(m^{N,i}_0,m_0)$ to $0$ imply the existence of a sequence of random variables $(\varepsilon_N)_{N\ge 0}$ converging a.s. to $0$ as $N\to\infty$ such that
\begin{equation}
   \forall 1\le i\le N,\;\left|\ld[U](m^{N,i\wedge I_N}_0,x)-\ld[U](m_0,x)\right|\le (1+|x|^{\ell/2})\varepsilon_N.\label{cvuldu}
\end{equation}
Since $m_0\in{\mathcal P}_{\ell}(\R^d)\subset{\mathcal P}_{\ell/2}(\R^d)$, we deduce that $$\max_{1\le i\le N}\left|\intrd \ld[U](m^{N,i\wedge I_N}_0,x) m_0(dx)-\intrd \ld[U](m_0,x) m_0(dx)\right|$$ converges a.s. to $0$. By continuity of the square function, so do $$\max_{1\le i\le N}\left|\left(\intrd \ld[U](m^{N,i\wedge I_N}_0,x) m_0(dx)\right)^2-\left(\intrd \ld[U](m_0,x) m_0(dx)\right)^2\right|$$ and the smaller difference of mean values $\left|\frac{1}{N}\sum_{i=1}^N\left(\intrd \ld[U](m^{N,i\wedge I_N}_0,x) m_0(dx)\right)^2-\left(\intrd \ld[U](m_0,x) m_0(dx)\right)^2\right|$. On the other hand, using \eqref{cvuldu} and \eqref{growthball}, we obtain that
\begin{align*}
   &\left|\left(\ld[U](m^{N,i\wedge I_N}_0,x)\right)^2-\left(\ld[U](m_0,x)\right)^2\right|\\&\le\left(\ld[U](m^{N,i\wedge I_N}_0,x)-\ld[U](m_0,x)\right)^2+2\left|\ld[U](m_0,x)\right|\left|\ld[U](m^{N,i\wedge I_N}_0,x)-\ld[U](m_0,x)\right|\\&\le \left((1+|x|^{\ell/2})\varepsilon_N+2C\left(1+|x|^{\ell/2}+1_{\{\ell>0\}}\left(\int_{\R^d}|y|^\ell m_0(dy)\right)^{1/2}\right)\right)(1+|x|^{\ell/2})\varepsilon_N.
\end{align*}
Since $m_0\in{\mathcal P}_{\ell}(\R^d)$, we deduce that $\max_{1\le i\le N}\left|\intrd \left(\ld[U](m^{N,i\wedge I_N}_0,x)\right)^2 m_0(dx)-\intrd \left(\ld[U](m_0,x)\right)^2 m_0(dx)\right|$ converges a.s. to $0$ and $\frac{1}{N}\sum_{i=1}^N\intrd \left(\ld[U](m^{N,i\wedge I_N}_0,x)\right)^2 m_0(dx)$ to $\intrd \left(\ld[U](m_0,x)\right)^2 m_0(dx)$. Therefore $\sum_{i=1}^N\bE\left[X^2_{N,i}|{\mathcal F}_{i-1}\right]$ converges a.s. to $\intrd \left(\ld[U](m_0,x)\right)^2 m_0(dx)-\left(\intrd \ld[U](m_0,x) m_0(dx)\right)^2=\text{Var}  \bigg( \frac{\delta U}{\delta m} (m_0, \zeta_1) \bigg) $ as $N\to\infty$. By Corollary 3.1 p58 \cite{hall2014martingale}, to conclude that $$\sqrt{N}Q_N=\sum_{i=1}^N X_{N,i}\stackrel{d}{\implies} \cN \bigg( 0, \text{Var}  \bigg( \frac{\delta U}{\delta m} (m_0, \zeta_1) \bigg) \bigg),$$ it is enough to check the Lindeberg condition :  for each $\varepsilon>0$, $\sum_{i=1}^N\E\left[X_{N,i}^21_{\{X_{N,i}^2>\varepsilon\}}|{\mathcal F}_{i-1}\right]$ goes to $0$ in probability as $N\to\infty$.
When $\ell=0$, $\ld[U]$ is bounded on $B(m_0,r)\times\R^d$ and this condition is clearly satisfied. Let us suppose that $\ell>0$ and check that it is also satisfied.

By \eqref{growthball}, 
\begin{align*}
   \exists C<\infty,\;\forall (\mu,x)\in B(m_0,r)\times\R^d,\;\left(\ld[U](\mu,x)\right)^2\le C\left(1+|x|^\ell+\int_{\R^d}|y|^\ell \mu(dy)\right).
\end{align*}
Therefore
\begin{align}
   NX_{N,i}^2&\le 2\left(\ld[U](m^{N,i\wedge I_N}_0,\zeta_i)\right)^2+2\intrd \left(\ld[U](m^{N,i\wedge I_N}_0,x)\right)^2 m_0(dx)\notag\\&\le 2C\left(2+|\zeta_i|^\ell+\frac{2}{N}\sum_{j=1}^{i-1}|\zeta_j|^\ell+3\int_{\R^d}|y|^\ell m_0 (dy)\right).\label{xni2}
\end{align}
As, for $a,b,c,d\in\R_+$, \begin{align*}
                            (a+b+c)1_{\{a+b+c\ge d\}}&= (a+b+c)\left(1_{\{a>b,a>c,a+b+c\ge d\}}+1_{\{b\ge a,b>c,a+b+c\ge d\}}+1_{\{c\ge a,c\ge b,a+b+c\ge d\}}\right)\\&\le 3a1_{\{a>b,a>c,a+b+c\ge d\}}+3b1_{\{b\ge a,b>c,a+b+c\ge d\}}+3c1_{\{c\ge a,c\ge b,a+b+c\ge d\}}\\&\le 3a1_{\{a\ge d/3\}}+3b1_{\{b\ge d/3\}}+3c1_{\{c\ge d/3\}},\end{align*}
                          it is enough to check that for each $\varepsilon>0$, $$\sum_{i=1}^N\E\left[\frac{|\zeta_i|^\ell}{N}1_{\{\frac{|\zeta_i|^\ell}{N}>\varepsilon\}}|{\mathcal F}_{i-1}\right]+\sum_{i=1}^N\E\left[\frac{1}{N^2}\sum_{j=1}^{i-1}|\zeta_j|^\ell1_{\{\frac{1}{N^2}\sum_{j=1}^{i-1}|\zeta_j|^\ell>\varepsilon\}}\bigg|{\mathcal F}_{i-1}\right]$$ goes to $0$ as $N\to\infty$. On the one hand, $\sum_{i=1}^N\E\left[\frac{|\zeta_i|^\ell}{N}1_{\{\frac{|\zeta_i|^\ell}{N}>\varepsilon\}}|{\mathcal F}_{i-1}\right]=\E\left[|\zeta_1|^\ell1_{\{|\zeta_1|^\ell>N\varepsilon\}}\right]$ goes to $0$ as $N\to\infty$ since $|\zeta_1|^\ell$ integrable. On the other hand,
\begin{align*}
   \sum_{i=1}^N\E\left[\frac{1}{N^2}\sum_{j=1}^{i-1}|\zeta_j|^\ell1_{\{\frac{1}{N^2}\sum_{j=1}^{i-1}|\zeta_j|^\ell>\varepsilon\}}\bigg|{\mathcal F}_{i-1}\right]&=\frac{1}{N^2}\sum_{i=1}^N1_{\{\frac{1}{N^2}\sum_{j=1}^{i-1}|\zeta_j|^\ell>\varepsilon\}}\sum_{j=1}^{i-1}|\zeta_j|^\ell\\&\le 1_{\{\frac{1}{N}\sum_{j=1}^{N}|\zeta_j|^\ell>N\varepsilon\}}\frac{1}{N}\sum_{j=1}^{N}|\zeta_j|^\ell,
\end{align*}
where the right-hand side goes a.s. to $0$ as $N\to\infty$, since by the strong law of large numbers, $\frac{1}{N}\sum_{j=1}^{N}|\zeta_j|^\ell$ converges a.s. to $ \int_{\R^d}|y|^\ell m_0 (dy)<\infty$.

\end{proof}

In the two next corollaries, we give sufficient conditions in terms of second order linear functional derivatives for the assumptions \eqref{eq: Lip TV W1} and \eqref{cont in D} to hold. The assumption \eqref{eq: Lip TV W1} is directly implied by the existence of a second order linear functional derivative with appropriate growth. In the case $m_0$ discrete treated in Corollary \ref{corm0disc} below, so does assumption \eqref{cont in D}, which is of similar nature since, then, $D(\mu_1,\mu_2)=\int_{\R^d}(1+|y|^\ell)|\mu_2-\mu_1|(dy)$. When $D(\mu_1,\mu_2)=1_{\{\ell>0\}}W_\ell(\mu_2,\mu_1)+1_{\{\ell=0\}}\underline{W}(\mu_2,\mu_1)$, we also suppose regularity of $\sld[U](\mu,x,y)$ with respect to $y$ to get \eqref{cont in D}.

\begin{corollary}\label{cortlcs2}Let $\ell \geq 0$, $m_0\in \cP_{\ell}(\bR^d)$ and $m^N= \frac{1}{N} \sum_{i=1}^N \delta_{{\zeta}_i}$, where ${\zeta}_1, \ldots, {\zeta}_N$ are i.i.d. random variables with law $m_0$. If $U\in{\mathcal S}_{1,\ell/2}(\cP_{\ell}(\bR^d))\cap{\mathcal S}_{2,\ell}(\cP_{\ell}(\bR^d))$ is such that $\R^d\ni y\mapsto \sld[U](\mu,x,y)/(1+|x|^{\ell/2})$ is globally H\"older continuous with exponent $\alpha\in (0,1_{\{\ell=0\}}+\ell\wedge 1]$ uniformly in $(\mu,x)\in \cP_{\ell}(\bR^d)\times\R^d$, then 
   $$ \sqrt{N} \bigg ( U(m^N) - U(m_0) \bigg) \stackrel{d}{\implies} \cN \bigg( 0, \text{Var}  \bigg( \frac{\delta U}{\delta m} (m_0, \zeta_1) \bigg) \bigg) .$$
 Furthermore, 
\begin{equation} \sup_{N\in{\mathbb N}}\sqrt{N}\bE \Big| U(m^N) - U(m_0)  \Big| <\infty. \label{sec moment bound} 
\end{equation} \end{corollary}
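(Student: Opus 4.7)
The plan is to apply Theorem~\ref{CLT measure} with the choice $\alpha=1$ to obtain the central limit theorem, and then establish \eqref{sec moment bound} via the martingale decomposition already at the heart of that theorem's proof.

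\textbf{Step 1 (easy hypotheses).} Since $U\in\cS_{1,\ell/2}(\cP_\ell(\R^d))$, the linear functional derivative $\ld[U]$ exists on all of $\cP_\ell(\R^d)$ and in particular on any $D$-ball $B(m_0,r)$. The growth bound \eqref{growthball} reduces to $|\ld[U](\mu,x)|\le C'(1+|x|^{\ell/2})$ on $B(m_0,r)$ once one checks that $\sup_{\mu\in B(m_0,r)}\int|z|^\ell\mu(dz)<\infty$, which follows directly from the definition of $D$ together with Lemma~\ref{lemwl} when $\ell\in(0,1)$. For the H\"older condition \eqref{eq: Lip TV W1}, Theorem~\ref{lin thm} yields
\[
\ld[U](\mu_2,x)-\ld[U](\mu_1,x)=\int_0^1\int_{\R^d}\sld[U]((1-s)\mu_1+s\mu_2,x,y)\,(\mu_2-\mu_1)(dy)\,ds,
\]
and the $\cS_{2,\ell}$ estimate bounds the integrand by $C(1+|x|^\ell+|y|^\ell+\int|z|^\ell\mu_s(dz))$, with the last term uniformly controlled on $B(m_0,r)$. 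Integrating against $|\mu_2-\mu_1|(dy)$ produces a bound of the form $C'\bigl[(1+|x|^\ell)W_0(\mu_2,\mu_1)+\int|y|^\ell|\mu_2-\mu_1|(dy)\bigr]$, which is exactly \eqref{eq: Lip TV W1} with $\alpha=1>1/2$.

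\textbf{Step 2 (continuity condition \eqref{cont in D}).} This is where the $y$-H\"older hypothesis enters. Applying Theorem~\ref{lin thm} to $\ld[U](\mu,x)-\ld[U](m_0,x)$, the normalisation $\sld[U](\nu,x,0)=0$ combined with the assumed H\"older continuity in $y$ gives $|\sld[U](\nu,x,y)-\sld[U](\nu,x,y')|\le K(1+|x|^{\ell/2})|y-y'|^\alpha$. For $\ell>0$, the Kantorovich--Rubinstein inequality applied to $\alpha$-H\"older test functions ($\alpha\le 1$) gives $|\int \sld[U](\nu,x,y)(\mu-m_0)(dy)|\le K(1+|x|^{\ell/2})W_\alpha(\mu,m_0)$, and Jensen's inequality (using $\alpha\le\ell$) shows $W_\alpha(\mu,m_0)\to 0$ as $W_\ell(\mu,m_0)\to 0$; in the discrete case $D$ is even stronger, so the same estimate suffices. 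For $\ell=0$, the $\cS_{2,0}$ bound shows $\sld[U]$ is uniformly bounded while the H\"older hypothesis makes the family $\{\sld[U](\nu,x,\cdot)\}_{\nu,x}$ uniformly equicontinuous; tightness of $\mu$ near $m_0$ combined with an Arz\'ela--Ascoli / finite-net argument gives uniform-in-$(\nu,x)$ convergence of $\int \sld[U](\nu,x,y)(\mu-m_0)(dy)$ to $0$ as $\underline W(\mu,m_0)\to 0$. With all four hypotheses verified, Theorem~\ref{CLT measure} delivers the CLT.

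\textbf{Step 3 ($L^1$ bound).} Since $\ld[U]$ is defined on the whole of $\cP_\ell(\R^d)$, the stopping time $I_N$ in the proof of Theorem~\ref{CLT measure} is superfluous here: applying Theorem~\ref{lin thm} to each telescoping increment $U(m^{N,i}_1)-U(m^{N,i}_0)$ gives, identically for every $N$,
\[
U(m^N)-U(m_0)=\tilde Q_N+\tilde R_N,
\]
where
\[
\tilde Q_N:=\frac{1}{N}\sum_{i=1}^N\Bigl[\ld[U](m^{N,i}_0,\zeta_i)-\int_{\R^d}\ld[U](m^{N,i}_0,y)\,m_0(dy)\Bigr]
\]
is a sum of martingale differences for $(\cF_{i-1}=\sigma(\zeta_1,\ldots,\zeta_{i-1}))$, and $\tilde R_N$ is the corresponding linearisation remainder. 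The $\cS_{1,\ell/2}$ bound together with $m_0\in\cP_\ell$ gives $\E[\ld[U](m^{N,i}_0,\zeta_i)^2]\le C$ uniformly in $i,N$, so $\E|\tilde Q_N|^2\lesssim 1/N$ and hence $\E|\tilde Q_N|\lesssim 1/\sqrt N$. A second application of Theorem~\ref{lin thm} to $\ld[U](m^{N,i}_s,y)-\ld[U](m^{N,i}_0,y)$, combined with the $\cS_{2,\ell}$ estimate and $|m^{N,i}_s-m^{N,i}_0|\le\tfrac{s}{N}(\delta_{\zeta_i}+m_0)$, yields $\E|\tilde R_N|\lesssim 1/N$. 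Adding the two bounds gives $\sqrt N\,\E|U(m^N)-U(m_0)|=O(1)$ uniformly in $N$.

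\textbf{Main obstacle.} The delicate point is \eqref{cont in D} when $m_0$ is non-discrete, because $W_\ell$ or $\underline W$ convergence does not directly give control of the measure-valued integrands appearing after applying Theorem~\ref{lin thm}; the H\"older-in-$y$ hypothesis is exactly what allows the KR--Jensen chain in the $\ell>0$ case and the equicontinuity argument in the $\ell=0$ case.
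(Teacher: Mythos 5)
Your proposal is correct and follows essentially the same route as the paper: verify \eqref{growthball} and \eqref{eq: Lip TV W1} (with $\alpha=1$) from the $\cS_{2,\ell}$ growth bound via Theorem~\ref{lin thm}, obtain \eqref{cont in D} from the H\"older-in-$y$ hypothesis by testing $\mu-m_0$ against $\alpha$-H\"older functions, and prove \eqref{sec moment bound} by taking $r=+\infty$ so that the decomposition $U(m^N)-U(m_0)=Q_N+R_N$ of Theorem~\ref{CLT measure} holds globally, with $\E[Q_N^2]\lesssim 1/N$ from the martingale structure and $\E|R_N|\lesssim 1/N$ from the second-derivative estimate. The only cosmetic difference is in \eqref{cont in D}: where you invoke Kantorovich--Rubinstein duality for $W_\alpha$ (and an equicontinuity argument when $\ell=0$), the paper plugs a $W_{\ell\wedge 1}$- (resp.\ $\underline{W}$-) optimal coupling directly into the integral of $\sld[U](\cdot,x,y)-\sld[U](\cdot,x,z)$ and applies Jensen, which is the same estimate in dual form.
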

\begin{proof}
  Let us check for the first statement that the hypotheses of Theorem \ref{CLT measure} are satisfied. Since $U \in{\mathcal S}_{1,\ell/2}(\cP_{\ell}(\bR^d))$, \eqref{growthball} holds.
  
  If $\ell=0$, $U\in\cS_{2,0}(\cP_{0}(\bR^d))$ and $\sld[U]$ is bounded by some finite constant $C$ so that for all $\mu_1,\mu_2\in{\mathcal P}_0(\R^d)$ and $x\in\R^d$,
\begin{align*}
   \bigg| \ld[U](\mu_2, x) - \ld[U](\mu_1, x) \bigg| =\left|\int_0^1\int_{\R^d}\sld[U](s\mu_2+(1-s)\mu_1, x,y)(\mu_2-\mu_1)(dy)ds\right|\le CW_0(\mu_2,\mu_1)
\end{align*}
and \eqref{eq: Lip TV W1} is satisfied for $\ell=0$.\\If $\ell>0$, $U\in\cS_{2,\ell}(\cP_{\ell}(\bR^d))$ and
$$\exists C<\infty,\;\forall (\mu,x,y)\in {\mathcal P}_\ell(\R^d)\times\R^d\times\R^d,\;\left|\sld[U](\mu, x,y)\right|\le C\left(1+|x|^\ell+|y|^\ell+\int_{\R^d}|z|^\ell\mu(dz)\right).$$
For $m_0\in {\mathcal P}_\ell(\R^d)$ and $\mu\in B(m_0,r)$ the ball centered at $m_0$ with $W_\ell$ radius $r$ ,
$$\int_{\R^d}|z|^\ell\mu(dz)\le 2^{(\ell-1)\vee 0}\left(\int_{\R^d}|z|^\ell m_0(dz)+W_\ell^{\ell\vee 1}(m_0,\mu)\right)\le 2^{(\ell-1)\vee 0}\left(\int_{\R^d}|z|^\ell m_0(dz)+r^{\ell\vee 1}\right).$$
Let $\mu_1,\mu_2\in B(m_0,r)$ and $x\in\R^d$. Since for $s\in[0,1]$, $s\mu_2+(1-s)\mu_1\in B(m_0,r)$, we deduce that
\begin{align}
   \bigg| \ld[U](\mu_2, x) - \ld[U](\mu_1, x) \bigg| \le &C\left(1+2^{(\ell-1)\vee 0}\left(\int_{\R^d}|z|^\ell m_0(dz)+r^{\ell\vee 1}\right)+|x|^\ell\right)W_0(\mu_2,\mu_1)\notag\\&+C\int_{\R^d}|y|^\ell|\mu_2-\mu_1|(dy),\label{majdifldmu}
\end{align}
so that \eqref{eq: Lip TV W1} is satisfied with $\alpha =1$.
Still for $\ell>0$, introducing a $W_\ell$-optimal coupling $\pi$ between $\mu_1$ and $\mu_2$, we deduce from the H\"older continuity property that
\begin{align}
  \bigg| \ld[U](\mu_2, x) &- \ld[U](\mu_1, x) \bigg|\notag\\&=\left|\int_0^1\int_{\R^d\times\R^d}\left(\sld[U](s\mu_2+(1-s)\mu_1, x,y)-\sld[U](s\mu_2+(1-s)\mu_1, x,z)\right)\pi(dy,dz)ds\right|\notag\\
  &\le C(1+|x|^{\ell/2})\int_{\R^d\times\R^d}|y-z|^\alpha\pi(dy,dz)\le C(1+|x|^{\ell/2})W^{\alpha/(\ell\wedge 1)}_{\ell\wedge 1}(\mu_2,\mu_1)\notag\\&\le C(1+|x|^{\ell/2})W^{\alpha/(\ell\wedge 1)}_{\ell}(\mu_2,\mu_1).\label{majodifldu}
\end{align}
When $\ell=0$, introducing a $\underline{W}$-optimal coupling $\pi$ between $\mu_1$ and $\mu_2$ and  also using the boundedness of $\sld[U]$, we get
\begin{equation}
 \bigg| \ld[U](\mu_2, x) - \ld[U](\mu_1, x) \bigg|\le C\int_{\R^d\times\R^d}(|y-z|^\alpha\wedge 1) \pi(dy,dz)\le C\underline{W}^\alpha(\mu_2,\mu_1).\label{majodifldubis}  
\end{equation}
Using a $W_0$-optimal coupling between $\mu_1$ and $\mu_2$, one easily checks (see, for instance, Theorem 6.15 in \cite{villani2008optimal} when $\ell\ge 1$)  that  \begin{equation*}
   W_\ell^{\ell\vee 1}(\mu_2,\mu_1)\le 2^{(\ell-1)\vee 0}\int_{\R^d}|y|^\ell|\mu_2-\mu_1|(dy).
 \end{equation*}
 Since $\underline{W}(\mu_2,\mu_1)\le W_0(\mu_2,\mu_1)=\int_{\R^d}|\mu_2-\mu_1|(dy)$, we deduce that any ball for the metric $D$ is included in a ball for $1_{\{\ell>0\}}W_\ell+1_{\{\ell=0\}}\underline{W}$ and that the convergence to $0$ of $D(\mu,m_0)$ implies that of $1_{\{\ell>0\}}W_\ell(\mu,m_0)+1_{\{\ell=0\}}\underline{W}(\mu,m_0)$ and of $\sup_{x\in\R^d}|\ld[U](\mu, x)-\ld[U](m_0, x)|/(1+|x|^{\ell/2})$ by \eqref{majodifldu} and \eqref{majodifldubis}.

For the second statement, we may choose $r=+\infty$ in the proof of Theorem \ref{CLT measure}, so that $I_N=N+1$ in \eqref{defQn} and $N^*=0$ in \eqref{defsn} define the two terms in the decomposition $U(m^N)-U(m_0)=Q_N+R_N$. From the third step of that proof, we have $\bE|R_N|\lesssim N^{-\alpha}$ with $\alpha>1/2$,  while the martingale property and the estimation \eqref{xni2} ensure that $\sup_{N\ge 1}N\E[Q_N^2]\le\sup_{(i,N):1\le i\le N}N\E[X_{N,i}^2]<\infty$. 
\end{proof}\begin{corollary}\label{corm0disc}Let $\ell \geq 0$, $m_0\in \cP_{\ell}(\bR^d)$ be discrete and $m^N= \frac{1}{N} \sum_{i=1}^N \delta_{{\zeta}_i}$, where ${\zeta}_1, \ldots, {\zeta}_N$ are i.i.d. random variables with law $m_0$. If $U\in{\mathcal S}_{1,\ell/2}(\cP_{\ell}(\bR^d))\cap{\mathcal S}_{2,\ell}(\cP_{\ell}(\bR^d))$ is such that$$\exists C<\infty,\;\forall (\mu,x,y)\in {\mathcal P}_\ell(\R^d)\times\R^d\times\R^d,\;\left|\sld[U](\mu, x,y)\right|\le C\left(1+|x|^{\ell/2}+|y|^\ell+\int_{\R^d}|z|^\ell\mu(dz)\right),$$  then 
   $$ \sqrt{N} \bigg ( U(m^N) - U(m_0) \bigg) \stackrel{d}{\implies} \cN \bigg( 0, \text{Var}  \bigg( \frac{\delta U}{\delta m} (m_0, \zeta_1) \bigg) \bigg) .$$
 Furthermore, 
\begin{equation*} \sup_{N\in{\mathbb N}}\sqrt{N}\bE \Big| U(m^N) - U(m_0)  \Big| <\infty.
\end{equation*} \end{corollary}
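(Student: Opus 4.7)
The plan is to verify the hypotheses of Theorem \ref{CLT measure} with the choice $D(\mu_1,\mu_2)=\int_{\R^d}(1+|y|^\ell)|\mu_2-\mu_1|(dy)$, which is available here because $m_0$ is assumed to be discrete. Since $U\in\cS_{1,\ell/2}(\cP_{\ell}(\bR^d))$, the linear functional derivative $\ld[U]$ exists on $\cP_\ell(\R^d)$, and the growth estimate \eqref{growthball} follows immediately from the definition of the class $\cS_{1,\ell/2}$, so the first two bulleted assumptions of Theorem \ref{CLT measure} are free.

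The heart of the argument is to use the second order linear functional derivative to check the Hölder condition \eqref{eq: Lip TV W1} and the continuity condition \eqref{cont in D}. Fix $r>0$ and $\mu_1,\mu_2\in B(m_0,r)$ for the metric $D$. Applying Theorem \ref{lin thm} to $m\mapsto\ld[U](m,x)$ along the segment $\mu_s:=s\mu_2+(1-s)\mu_1$, I would write
$$\ld[U](\mu_2,x)-\ld[U](\mu_1,x)=\int_0^1\int_{\R^d}\sld[U](\mu_s,x,y)(\mu_2-\mu_1)(dy)\,ds,$$
and then substitute the hypothesised bound on $\sld[U]$. Since every $\mu_s$ lies in $B(m_0,r)$ for $D$, its $\ell$-moment is controlled by $M_0+r$ where $M_0:=\int|z|^\ell m_0(dz)$, so the term $\int|z|^\ell\mu_s(dz)$ contributes only a constant. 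This produces an estimate of the form
$$\bigl|\ld[U](\mu_2,x)-\ld[U](\mu_1,x)\bigr|\le C'(1+|x|^{\ell/2})W_0(\mu_2,\mu_1)+C\int_{\R^d}|y|^\ell|\mu_2-\mu_1|(dy),$$
for a constant $C'$ depending on $M_0$ and $r$. Since $(1+|x|^{\ell/2})\le (1+|x|^\ell)$, this is \eqref{eq: Lip TV W1} with $\alpha=1$. For \eqref{cont in D}, dividing the same estimate by $(1+|x|^{\ell/2})$ gives a bound by a constant multiple of $D(\mu,m_0)$, which vanishes by assumption, so \eqref{cont in D} holds. Theorem \ref{CLT measure} then delivers the CLT.

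For the moment bound, I would imitate the end of the proof of Corollary \ref{cortlcs2}: take formally $r=+\infty$, so that $I_N=N+1$ and $N^*=0$ in \eqref{defQn}--\eqref{defsn}, which defines the decomposition $U(m^N)-U(m_0)=Q_N+R_N$ algebraically. To estimate $R_N$ globally, I would apply Theorem \ref{lin thm} directly to $\ld[U](m^{N,i}_s,y)-\ld[U](m^{N,i}_0,y)$ using that $|m^{N,i}_s-m^{N,i}_0|\le\frac{s}{N}(\delta_{\zeta_i}+m_0)$ and that the $\ell$-moment of every $m^{N,i}_{ts}$ is at most $M_0+\frac{1}{N}\sum_{j=1}^i|\zeta_j|^\ell$, whose expectation is at most $M_0+M_0$. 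This yields $\E|R_N|\lesssim 1/N$, so $\sqrt{N}\E|R_N|=O(N^{-1/2})$. The martingale property and the bound \eqref{xni2}, which only uses \eqref{growthball} and so is valid globally, give $\E[Q_N^2]\lesssim 1/N$, and by Cauchy--Schwarz $\sqrt{N}\E|Q_N|=O(1)$. Combining these yields $\sup_N\sqrt{N}\E|U(m^N)-U(m_0)|<\infty$.

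The main technical point is the compatibility between the weaker $|x|^{\ell/2}$ growth of $\sld[U]$ in its second variable (compared to the $|x|^\ell$ growth used in Corollary \ref{cortlcs2}) and the specific form of \eqref{eq: Lip TV W1} with $\alpha=1$: the former is in fact what makes the continuity condition \eqref{cont in D} come out cleanly after dividing by $(1+|x|^{\ell/2})$, without requiring a separate Hölder continuity assumption on $y\mapsto\sld[U](\mu,x,y)$ as was needed in Corollary \ref{cortlcs2}. The price is that we must restrict to a metric $D$ strong enough to compare with total variation, which is precisely why $m_0$ being discrete is invoked.
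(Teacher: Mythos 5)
Your proposal is correct and follows essentially the same route as the paper: the paper proves this corollary by reducing to the proof of Corollary \ref{cortlcs2} and observing that the only new point is the continuity condition \eqref{cont in D} for the weighted total-variation metric $D$, which follows because the $|x|^{\ell/2}$ growth of $\sld[U]$ in its second argument makes the analogue of \eqref{majdifldmu} divisible by $1+|x|^{\ell/2}$ — exactly the mechanism you identify. Your verification of \eqref{eq: Lip TV W1} with $\alpha=1$ via Theorem \ref{lin thm} and your treatment of the moment bound (taking $r=+\infty$, bounding $R_N$ and $Q_N$ separately) reproduce, with a bit more detail, the computations already carried out in the proofs of Theorem \ref{CLT measure} and Corollary \ref{cortlcs2} to which the paper defers.
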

Notice that the assumptions on $U$ are satisfied as soon as $U\in{\mathcal S}_{2,\ell/2}(\cP_{\ell}(\bR^d))$.
\begin{proof}
The only difference with Corollary \ref{cortlcs2} concerns the proof that $\sup_{x\in\R^d}|\ld[U](\mu, x)-\ld[U](m_0, x)|/(1+|x|^{\ell/2})$ goes to $0$ as $D(\mu,m_0)=\int_{\R^d}(1+|x|^\ell)|\mu-m_0|(dx)$ goes to $0$. This continuity property is implied by the fact that, under  the growth assumption on $\sld[U](\mu, x,y)$, \eqref{majdifldmu} holds with $|x|^{\ell}$ replaced by $|x|^{\ell/2}$ in the right-hand side.
\end{proof}
The following example illustrates the power of Corollary \ref{cortlcs2}, if a function behaves badly w.r.t. the measure component, but is very regular w.r.t. the spatial components. In this case, the conditions in Corollary \ref{cortlcs2} are easier to verify than Theorem \ref{CLT measure}.
\begin{example}[Conditions in Corollary \ref{cortlcs2} are satisfied]
Let $U: \cP_0(\bR) \to \bR$ be defined by
$$ U (\mu):= \bigg| \int_{\bR} \sin(y) \, \mu(dy) \bigg|^{5/2}  . $$ 
By Example  \ref{eq: simple example F G mu},
$$ \ld[U](\mu,x_1) = \frac{5}{2} \bigg| \int_{\bR} \sin(y) \, \mu(dy) \bigg|^{3/2} \sign \bigg( \int_{\bR} \sin(y) \, \mu(dy) \bigg) \sin (x_1), $$
where $\sign: \bR \to \bR$ is the function defined by
$$ \sign (x):= \begin{cases} 
      -1, & x < 0, \\
      0, & x=0, \\
      1, & x>0,
   \end{cases} $$ 
and
$$ \sld[U](\mu,x_1,x_2) = \frac{15}{4} \bigg| \int_{\bR} \sin(y) \, \mu(dy) \bigg|^{1/2} \sin (x_1)  \sin (x_2).$$ 
Clearly, $U \in \cS_{1,0} (\cP_0(\bR)) \cap \cS_{2,0} (\cP_0(\bR))$. Moreover, $x_2 \mapsto \sld[U](\mu,x_1,x_2)$ is Lipschitz continuous, uniformly in $\mu$ and $x_1$. Therefore, the CLT holds for $U$ by Corollary \ref{cortlcs2} applied with $\ell=0$. Of course, it can also be deduced from the classical delta method.
\end{example}

We note that since Theorem  \ref{CLT measure} is more general than Corollary \ref{cortlcs2}, there are examples where  conditions of  Theorem  \ref{CLT measure} hold but not Corollary \ref{cortlcs2}. 
\begin{example} [Conditions in Theorem \ref{CLT measure} hold but not  Corollary \ref{cortlcs2}] \label{ex ii} 
Let $U: \cP_{12}(\bR) \to \bR$ be defined by
$$ U(\mu):= \bigg( \int_{\bR} x^2 \, \mu(dx) \bigg)^3.$$ 
By Example \ref{eq: simple example F G mu}, $\ld[U]$ $\sld[U]$ and $\frac{ \delta^3 U}{\delta m^3}$ all exist and are given by
$$ \ld[U](\mu,y_1)= 3 \bigg( \int_{\bR} x^2 \, \mu(dx) \bigg)^2 y_1^2, $$ 
$$ \quad \quad \quad \sld[U](\mu,y_1, y_2) = 6 \bigg( \int_{\bR} x^2 \, \mu(dx) \bigg)  y_1^2  y_2^2 , $$ 
and
$$ \frac{ \delta^3 U}{\delta m^3}(\mu, y_1, y_2, y_3) = 6  y_1^2  y_2^2 y_3^2.$$ 
By Young's inequality, $U \in \cS_{3,6} (\cP_{12}(\bR))$. Therefore,  $U \in \cS_{1,6} (\cP_{12}(\bR)) \cap \cS_{2,12} (\cP_{12}(\bR)) $. However, the condition on H\"older continuity in Corollary \ref{cortlcs2} does not hold, since $y \mapsto y^2$ is not uniformly continuous on $\bR$, therefore it cannot be H\"older continuous.

We now show that the conditions of Theorem \ref{CLT measure} hold for $\ell=12$ by showing that \eqref{eq: Lip TV W1} and \eqref{cont in D} are satisfied. Pick $r >0$ and consider the ball $B(m_0, r)$ in the $W_{12}$ metric for $m_0\in\cP_{12}(\R)$. Since $W_2\le W_{12}$, there exists a constant $C>0$ (depending on $r$ and $m_0$) such that
$$\int_{\bR} x^2 \, \mu (dx)  \leq C, \quad \quad \forall \mu \in B(m_0,r). $$ 
This implies that, for every $\mu_1, \mu_2 \in B(m_0,r)$,
\begin{align*}
  \bigg| \ld[U](\mu_2, x) -  \ld[U](\mu_1, x)  \bigg| &\leq 6 C\int_{\bR} x^2y^2|\mu_2-\mu_1|(dy)\le 3C x^4W_0(\mu_2,\mu_1)+3C\int_{\bR} y^4|\mu_2-\mu_1|(dy)\\&\le 3C(1+x^{12})W_0(\mu_2,\mu_1)+3C\int_{\bR}(1+y^{12})|\mu_2-\mu_1|(dy)\\&\le 6C(1+x^{12})W_0(\mu_2,\mu_1)+3C\int_{\bR}y^{12}|\mu_2-\mu_1|(dy)
\end{align*}
which proves \eqref{eq: Lip TV W1} for $\alpha=1$. Finally, we recall from Theorem 5.5 of \cite{carmona2017probabilistic} that
$W_2(\mu, m_0) \to 0$ implies that
$$ \int_{\bR} x^2 \, \mu  (dx) \to \int_{\bR} x^2  \, m_0 (dx). $$ 
Consequently, as $W_{12}(\mu,m_0)$ and therefore $W_2(\mu,m_0)$ go to $0$,
\begin{align*}
\sup_{x\in\R} \frac{ \Big|\ld[U](\mu, x)-\ld[U](m_0, x) \Big|}{1+|x|^{6}}  
& =  \sup_{x\in\R} \frac{3 \Big[ \big( \int_{\bR} y^2 \, \mu(dy) \big)^2 - \big( \int_{\bR} y^2 \, m_0(dy) \big)^2 \Big] x^2  }{1+|x|^{6}} \nonumber \\
& \leq 3 \Big[ \Big( \int_{\bR} y^2 \, \mu(dy) \Big)^2 - \Big( \int_{\bR} y^2 \, m_0(dy) \Big)^2 \Big] \to 0, \nonumber 
\end{align*}
which proves \eqref{cont in D}. 
\end{example}

\section{An application in mean-field theory: fluctuations of interacting diffusion over nonlinear functionals of measures} 
 \subsection{L-derivatives}
 In this section, we introduce the notion proposed by P.-L. Lions, which was expounded in other works in the literature (e.g. \cite{buckdahn2017mean, cardaliaguet2010notes,carmona2017probabilistic,crisan2017smoothing}).  Combining this notion with the notion of linear functional derivatives, we present the analysis of fluctuations of McKean-Vlasov SDEs over nonlinear functionals of measures. 

Suppose that the probability space $(\Omega, \cF, \bP)$ is atomless (i.e. there does not exist a measurable set which has positive measure and contains no set of smaller positive measure). Then for any $\mu \in \cP_0(\bR^d)$, we can always construct an $\bR^d$-valued random variable on $\Omega$ with law $\mu$ (see page 376 from \cite{carmona2017probabilistic}).

 For any function $U: \cP_2(\bR^d) \to \bR$, we define the lift $\widetilde{U}: L^2(\Omega,\cF, \bP; \bR^d) \to \bR$ by
\begin{equation} \widetilde{U}(\theta):= U(\rvlaw[\theta]). \label{eq lift} \end{equation}
Recall that $\widetilde{U}$ is said to the Fr\'{e}chet differentiable at $\theta_0$ if there exists a linear continuous map $D\widetilde{U}(\theta_0): L^2(\Omega,\cF, \bP; \bR^d) \to \bR$ such that
$$ \widetilde{U}(\theta_0 + \eta) - \widetilde{U}(\theta_0) = D\widetilde{U}(\theta_0)(\eta) + o(\| \eta\|_{L^2}),$$
as $\|\eta\|_{L^2} \to 0$. By the Riesz representation theorem, there exists a ($\bP$-a.s.) unique random variable $L_{\theta_0} \in L^2(\Omega,\cF, \bP; \bR^d)$ such that
$$D\widetilde{U}(\theta_0)(\eta) = \bE[ L_{\theta_0} \eta], \quad \quad \forall \eta \in L^2(\Omega,\cF, \bP; \bR^d). $$ 
The following theorem follows from Theorem 6.2 and Theorem 6.5 from \cite{cardaliaguet2010notes} (or equivalently, Proposition 5.24 and Proposition 5.25 from \cite{carmona2017probabilistic}) combined with Corollary 3.22 \cite{GaTu}.
\begin{theorem} \label{thm structure of gradient L derivatives} 
Suppose that $\widetilde{U}$ is Fr\'{e}chet differentiable at $\theta_0$ and $\hat{\theta}_0$. Suppose that 
$ \rvlaw[\theta_0] = \rvlaw[\hat{\theta}_0] = \mu \in \cP_2(\bR^d)$. Then
\begin{enumerate}[(i)]
    \item The joint law $(\theta_0,L_{\theta_0})$ is equal to the joint law of $(\hat{\theta}_0, L_{\hat{\theta}_0})$.
    \item \label{borel measurable structure of gradient} There exists a Borel-measurable function $h : \bR^d \to \bR^d$ (uniquely determined $\mu$-a.e.) such that
    $ \int_{\bR^d} | h(x)|^2 \, \mu(dx) < + \infty$ and 
    $$ h(\theta_0) = L_{\theta_0}, \quad \quad h(\hat{\theta}_0)= L_{\hat{\theta}_0}, \quad \quad \text{a.s.} $$ 
\end{enumerate}
\end{theorem}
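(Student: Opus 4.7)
The plan is to prove (ii) first and then to read off (i) from the $\mu$-a.e.\ uniqueness of the representing map. To identify $\bE[L_{\theta_0}\mid\theta_0]$, I would first test $\widetilde{U}$ along the ``geometric'' directions $\eta=\phi(\theta_0)$ with $\phi:\bR^d\to\bR^d$ Borel and $\int_{\bR^d}|\phi|^2\,d\mu<\infty$. Since $\theta_0+t\phi(\theta_0)=(\mathrm{id}+t\phi)(\theta_0)$ has law $(\mathrm{id}+t\phi)_\#\mu$, Fr\'echet differentiability at $\theta_0$ forces
$$\bE[L_{\theta_0}\cdot\phi(\theta_0)]=\frac{d}{dt}\bigg|_{t=0}U\big((\mathrm{id}+t\phi)_\#\mu\big),$$
a quantity depending only on $\mu$ and $\phi$. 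This is a bounded linear functional on $L^2(\bR^d,\mu;\bR^d)$, so Riesz representation produces a Borel map $h:\bR^d\to\bR^d$, unique $\mu$-a.e., with $\int_{\bR^d}|h|^2\,d\mu<\infty$ and $\bE[L_{\theta_0}\mid\theta_0]=h(\theta_0)$ almost surely.

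The main obstacle is upgrading this identity to $L_{\theta_0}=h(\theta_0)$ a.s., equivalently proving $\sigma(\theta_0)$-measurability of $L_{\theta_0}$. My plan is to exploit the equivariance
$$L_{\theta_0\circ\sigma}=L_{\theta_0}\circ\sigma\quad\text{a.s.}$$
for every measure-preserving bijection $\sigma$ of $(\Omega,\cF,\bP)$, obtained by writing $\widetilde{U}(\theta_0\circ\sigma+t\eta)=\widetilde{U}(\theta_0+t\,\eta\circ\sigma^{-1})$, differentiating at $t=0$, and invoking uniqueness of the $L^2$-gradient. Specialising to those $\sigma$ that fix $\theta_0$ almost surely then yields $L_{\theta_0}=L_{\theta_0}\circ\sigma$ a.s. The atomlessness of $(\Omega,\cF,\bP)$ is precisely what guarantees that the $\sigma$-algebra of random variables $\bP$-a.s.\ invariant under every measure-preserving bijection fixing $\theta_0$ coincides with $\sigma(\theta_0)$ modulo $\bP$-null sets; this is the point at which the atomless hypothesis is genuinely used, and where the combination of Theorems~6.2 and~6.5 of \cite{cardaliaguet2010notes} with Corollary~3.22 of \cite{GaTu} does the heavy lifting. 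Combined with the first step, this gives $L_{\theta_0}=h(\theta_0)$ a.s., which is (ii).

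Part (i) is then immediate. The intrinsic characterisation from the first paragraph identifies $h$ purely in terms of $\mu=\rvlaw[\theta_0]=\rvlaw[\hat{\theta}_0]$, so applying (ii) to $\hat{\theta}_0$ yields $L_{\hat{\theta}_0}=h(\hat{\theta}_0)$ a.s.\ with the \emph{same} $h$. Both joint laws then coincide with the pushforward of $\mu$ by $x\mapsto(x,h(x))$, establishing (i).
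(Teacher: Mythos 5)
First, a point of comparison: the paper does not actually prove this theorem --- it is quoted as a consequence of Theorems 6.2 and 6.5 of \cite{cardaliaguet2010notes} (equivalently Propositions 5.24 and 5.25 of \cite{carmona2017probabilistic}) combined with Corollary 3.22 of \cite{GaTu}, so there is no in-paper argument to match step by step. Within your reconstruction, the first step (identifying $\bE[L_{\theta_0}\mid\theta_0]$ by testing $\widetilde{U}$ along directions $\phi(\theta_0)$, observing that $\bE[L_{\theta_0}\cdot\phi(\theta_0)]=\frac{d}{dt}\big|_{t=0}U\big((\mathrm{id}+t\phi)_\#\mu\big)$ is intrinsic to $\mu$, and applying Riesz representation in $L^2(\mu;\bR^d)$) and the last step (deducing (i) from (ii) because the representing map $h$ depends only on $\mu$) are both correct; the equivariance identity $L_{\theta_0\circ\sigma}=L_{\theta_0}\circ\sigma$ is also correctly derived.

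The gap is in the pivotal middle step. Reducing $L_{\theta_0}=h(\theta_0)$ to the assertion that the $\sigma$-algebra of random variables a.s.\ invariant under every measure-preserving bijection fixing $\theta_0$ coincides with $\sigma(\theta_0)$ modulo null sets does not close the argument: that assertion \emph{is} the hard content of the structure theorem, it is stated without proof, and it is not the statement the cited results provide, so they cannot ``do the heavy lifting'' for it in the form you invoke them. The claim is also delicate in its own right: it requires a sufficiently rich family of measure-preserving bijections acting within the fibres of $\theta_0$, hence that the conditional laws given $\theta_0$ live on standard atomless spaces --- which does not follow from atomlessness of $(\Omega,\cF,\bP)$ alone. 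Moreover, by restricting to bijections that fix $\theta_0$ \emph{exactly}, your scheme never uses the $o(\|\eta\|_{L^2})$ remainder in the Fr\'echet expansion, whereas the proofs in \cite{cardaliaguet2010notes} and \cite{carmona2017probabilistic} use it essentially: they partition $\bR^d$ into cells of small diameter, shuffle $\Omega$ within the preimages of the cells (which only \emph{approximately} fixes $\theta_0$ but preserves its law, so that $\widetilde{U}(\theta_0\circ\tau)=\widetilde{U}(\theta_0)$), combine this with the first-order expansion to control $\bE\big[|L_{\theta_0}-\bE[L_{\theta_0}\mid\theta_0]|^2\big]$, and let the mesh go to $0$. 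To repair the proposal, either prove the invariant-$\sigma$-algebra claim under the stated hypotheses, or replace the middle step by this quantitative approximation argument.
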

We are now in a position to define L-derivatives. The previous theorem tells us that the following definition makes sense. 
\begin{definition}
\begin{enumerate}[(i)]
\item A function $U: \cP_2(\bR^d) \to \bR$ is said to be L-differentiable at $\mu \in \cP_2(\bR^d)$ if there exists a random variable $\theta_0$ with law $\mu$ such that $\widetilde{U} $ is Fr\'{e}chet differentiable at $\theta_0$. 
\item If $U: \cP_2(\bR^d) \to \bR$ is L-differentiable at $\mu \in \cP_2(\bR^d)$, then its L-derivative \footnote{For brevity, in this work, we say \emph{the} L-derivative, rather than a $\mu$-version of L-derivative. Any property imposed on the L-derivatives in later parts means that it is applicable to at least one $\mu$-version.} $\pmu U(\mu)$ is defined to be $\pmu U(\mu):= h$, where $h: \bR^d \to \bR^d$ is the Borel-measurable function in  \eqref{borel measurable structure of gradient} of Theorem \ref{thm structure of gradient L derivatives}. Moreover, we define the joint map $\pmu U: \cP_2(\bR^d) \times \bR^d \to \bR^d$ by
$$ \pmu U(\mu,y):= [ \pmu U(\mu)](y).$$ 
\end{enumerate}
\end{definition}
We define higher order derivatives of measure functionals by iterating the definitions of L-derivatives. Following the approach adopted in the work \cite{chassagneux2019weak} and \cite{crisan2017smoothing}, for any $k \in \bN$, we formally define higher order derivatives in measures through the following iteration (provided that they actually exist): for any $k \geq 2$, $(i_1, \ldots, i_k) \in \{ 1, \ldots, d \}^k$ and $x_1, \ldots, x_k \in \bR^d$, the function $\partial^k_{\mu} f:\mathcal{P}_2(\bR^d) \times (\bR^d)^{ k} \to (\bR^d)^{\otimes k}$ is defined by
\begin{equation} \bigg( \partial^k_{\mu} f( \mu, x_1, \ldots, x_k) \bigg)_{(i_1, \ldots, i_k)} := \bigg( \pmu \bigg( \Big( \partial^{k-1}_{\mu} f( \cdot, x_{1}, \ldots, x_{k-1}) \Big)_{(i_1, \ldots, i_{k-1})} \bigg)(\mu,x_k) \bigg)_{i_k}, \label{eq:generalformulaintro} \end{equation} 
and its corresponding mixed derivatives in space $\partial^{\ell_k}_{v_k} \ldots \partial^{\ell_1}_{v_1} \partial^k_{\mu} f:\mathcal{P}_2(\bR^d) \times (\bR^d)^{ k} \to (\bR^d)^{\otimes (k+ \ell_1 +\ldots \ell_k)} $ are defined by
\begin{equation} \bigg( \partial^{\ell_k}_{v_k} \ldots \partial^{\ell_1}_{v_1} \partial^k_{\mu} f( \mu, x_1, \ldots, x_k) \bigg)_{(i_1, \ldots, i_k)} := \frac{\partial^{\ell_k}}{\partial x^{\ell_k}_k} \ldots \frac{\partial^{\ell_1}}{\partial x^{\ell_1}_1} \bigg[ \bigg( \partial^k_{\mu} f( \mu, x_1, \ldots, x_k) \bigg)_{(i_1, \ldots, i_k)} \bigg],  \label{eq:generalformulamixed} \end{equation}
for $\ell_1 \ldots \ell_k \in \bN \cup \{0 \}$. The spatial derivatives commute with the derivatives in measure as long as $j$ derivatives in the measure are kept at the right of each $\partial_{v_j}$. Since this notation for higher order derivatives in measure is quite cumbersome, we introduce the following multi-index notation for brevity. 
\begin{definition}[Multi-index notation]
Let $ n, \ell$ be non-negative integers. Also, let $\bm{\beta}=(\beta_1, \ldots, \beta_n)$ be an $n$-dimensional vector of non-negative integers. Then we call any ordered tuple of the form $(n,\ell, \bm{\beta})$ or $(n,\bm{\beta})$ a \emph{multi-index}. For any function $f:\R^d \times\cP_2(\R^d) \to \R$, the derivative $D^{(n,\ell, \bm{\beta})} f(x, \mu , v_1, \ldots, v_n)$  is defined as 
$$D^{(n,\ell, \bm{\beta})} f (x, \mu , v_1, \ldots, v_n) :=  \partial^{\beta_n}_{v_n} \ldots \partial^{\beta_1}_{v_1} \partial^{\ell}_x \pnmu f( x, \mu , v_1, \ldots, v_n),$$
if this derivative is well-defined.
For any function $\Phi: \mathcal{P}_2 ( \bR^d) \to \bR$, we define
$$D^{(n,\bm{\beta})} \Phi( \mu , v_1, \ldots, v_n) :=  \partial^{\beta_n}_{v_n} \ldots \partial^{\beta_1}_{v_1} \pnmu \Phi( \mu , v_1, \ldots, v_n), $$ 
if this derivative is well-defined. Finally, we also define the \emph{order} \footnote{ We do not consider `zeroth' order derivatives in our definition, i.e. at least one of $n$, $\beta_1, \ldots, \beta_n$ and $\ell$ must be non-zero, for every multi-index $\big(n, \ell, (\beta_1, \ldots, \beta_n) \big)$.} $ |(n,\ell, \bm{\beta})| $ (resp.  $|(n,\bm{\beta})|$ ) by
\begin{equation} |(n,\ell, \bm{\beta})|:= n+ \beta_1 + \ldots + \beta_n + \ell , \quad \quad |(n,\bm{\beta})|:= n+ \beta_1 + \ldots + \beta_n .  \label{eq:orderdef}
\end{equation}
\end{definition}
We now introduce a convenient class of functionals of measure that will serve as a hypothesis for some results.  

\begin{definition} \label{def Mk f} 
A function $f: \bR^d \times \cP_2 ( \bR^d) \to \bR$  belongs to class $\cM_k( \bR^d \times \cP_2 ( \bR^d)) $, if the derivatives  $D^{(n,\ell, \bm{\beta})} f (x,\mu,v_1, \ldots, v_n)$  exist
for every multi-index $(n,\ell, \bm{\beta})$ such that $|(n,\ell, \bm{\beta})| \leq k$ and satisfy
\begin{enumerate}[(i)]
\item \begin{equation}   
\quad \big| D^{(n,\ell, \bm{\beta})} f (x,\mu, v_1, \ldots, v_n) \big|  
 \leq  C, 
 \label{eq:boundf}
 \end{equation}
\item \begin{align}
\quad &\Big| D^{(n,\ell, \bm{\beta})} f (x,\mu, v_1, \ldots, v_n) -D^{(n,\ell, \bm{\beta})} f  (x', \mu' ,v'_1, \ldots, v'_n)  \Big| 
        \notag\\&\phantom{\Big| D^{(n,\ell, \bm{\beta})} f (x,\mu, v_1, \ldots, v_n) }\leq C \bigg( |x-x'| + \sum_{i=1}^n|v_i-v'_i|+ W_2 (\mu,\mu') \bigg),  \label{eq:Lipf}
\end{align} 
for any $x,x',v_1,v'_1,\ldots, v_n, v'_n \in \bR^d$ and $\mu, \mu' \in \mathcal{P}_2(\bR^d)$, for some constant $C>0$.
\end{enumerate}
Any function $f:\cP_2(\R^d) \to \R$ can be  extended to $\R^d \times \cP_2(\R^d)$ naturally by $(x,\mu)\mapsto f(\mu)$, for all $x \in \R^d$. This allows us to define the class $\cM_k(\cP_2 ( \bR^d)) $.
\end{definition}
\begin{remark}
By the mean-value theorem, assumption \eqref{eq:Lipf} automatically holds for any $|(n,\ell, \bm{\beta})| < k$, by assumption \eqref{eq:boundf}.
\end{remark}
For the time-dependent case, we extend the previous definition as follows.
\begin{definition} \label{def Mk time space measure} 
 A function $\cV: [0,T] \times \mathcal{P}_2 ( \bR^d) \to \bR$ is said  to be in $\cM_k ( [0,T] \times \mathcal{P}_2 ( \bR^d))$, if
\begin{enumerate}[(i)]
\item $s \mapsto \cV(s,\mu)$ is continuously differentiable on $[0,T]$.
\item $\cV(s, \cdot) \in \cM_k (\mathcal{P}_2 ( \bR^d))$, for each $s \in [0,T]$, where the constant $C$ in \eqref{eq:boundf} and \eqref{eq:Lipf} is uniform in $s \in [0,T]$.
\item All derivatives in measure (including the zeroth order derivative) of $\cV(\cdot,  \cdot)$ up to the $k$th order are jointly continuous in time and measure.
\end{enumerate}
\end{definition}
Examples regarding the computations of L-derivatives for various functionals of measures are given in Section 5.2.2 of \cite{carmona2017probabilistic}. In particular, Example 5.2.2.3 from \cite{carmona2017probabilistic} gives an analogue version to Theorem \ref{chain rule ii} for L-derivatives. The following examples are a direct consequence of this result. 

\begin{example}
The following functions $F: \bR^d \times \cP_2(\bR^d) \to \bR$ belong to $\cM_k(\bR^d \times \cP_2(\bR^d))$. 
\begin{enumerate}[(i)]
    \item $p$th-degree interaction:
    $$ F(x, \mu)= \int_{\bR^d} \ldots \int_{\bR^d} \varphi(x, y_1, \ldots, y_p) \, \mu(dy_1) \ldots \mu(dy_p),$$ 
    where $ \varphi :(\bR^d)^{p+1} \to \bR$ is  bounded and $C^k$ with bounded and Lipschitz partial derivatives up to and including order $k$. 
    \item $p$th-degree polynomial on the Wasserstein space:
    $$ F(x, \mu)= \prod_{i=1}^p \int_{\bR^d} \varphi_i (x, y) \, \mu(dy),$$
    where, for each $i \in \{1 , \ldots, p \}$,  $ \varphi_i:(\bR^d)^{2} \to \bR$ is  bounded and $C^k$ with bounded and Lipschitz partial derivatives up to and including order $k$. 
\end{enumerate}
\end{example}
The following results  establish links between linear functional derivatives and L-derivatives.
\begin{theorem}[Theorem 3.3.2 of \cite{tse2019quantitative}] \label{eq: L deriv linear relation higher}
Consider $U:\cP_2(\bR^d) \to \bR$. Suppose that $\partial^k_{\mu}U$ exists and is Lipschitz continuous. Then the $kth$ order linear functional derivative of $U$ exists and satisfies the relation
$$ \partial^k_{\mu}U(\mu,y_1, \ldots, y_k) = \partial_{y_1} \ldots \partial_{y_k} \frac{ \delta^k U}{\delta m^k} (\mu,y_1, \ldots, y_k). $$ \end{theorem}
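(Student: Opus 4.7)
The plan is to proceed by induction on $k$. The case $k=1$ contains the essential analytic work and produces the linear functional derivative as a by-product; the inductive step is then formal.

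For the base case, I fix $\mu,\nu\in\cP_2(\bR^d)$ and, on an atomless probability space, take mutually independent random variables $\theta\sim\mu$, $\xi\sim\nu$ and $B_\varepsilon\sim\mathrm{Bernoulli}(\varepsilon)$, so that $\theta_\varepsilon:=\theta+B_\varepsilon(\xi-\theta)$ has law $\mu+\varepsilon(\nu-\mu)$. I apply the fundamental theorem of calculus to the Fr\'echet-differentiable lift $\widetilde U$ along the $L^2$-segment $(\theta+sB_\varepsilon(\xi-\theta))_{s\in[0,1]}$, using $D\widetilde U(\cdot)=\pmu U(\rvlaw[\cdot],\cdot)$. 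Since the integrand vanishes on $\{B_\varepsilon=0\}$, dividing by $\varepsilon$ and letting $\varepsilon\to 0^+$ --- legitimate by dominated convergence, using Lipschitz continuity of $\pmu U$ together with $W_2(\rvlaw[\theta+sB_\varepsilon(\xi-\theta)],\mu)=O(\sqrt\varepsilon)$ --- yields the Gateaux derivative
\begin{equation*}
\lim_{\varepsilon\to 0^+}\tfrac{1}{\varepsilon}\bigl[U(\mu+\varepsilon(\nu-\mu))-U(\mu)\bigr]=\int_0^1\iint \pmu U(\mu,(1-s)z+sy)\cdot(y-z)\,\mu(dz)\,\nu(dy)\,ds.
\end{equation*}

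To identify the linear functional derivative, I set $V(\mu,y):=\int_0^1\int_{\bR^d} \pmu U(\mu,(1-s)z+sy)\cdot(y-z)\,\mu(dz)\,ds$. The substitution $s\leftrightarrow 1-s$ paired with swapping the two independent copies drawn from $\mu\otimes\mu$ gives $\int V(\mu,y)\,\mu(dy)=-\int V(\mu,y)\,\mu(dy)$, so this quantity vanishes; hence the limit above equals $\int V(\mu,y)\,(\nu-\mu)(dy)$, and $V$ (normalised by subtracting $V(\mu,0)$ to comply with \eqref{eq: normalisation linear functional deriatives}) is a valid linear functional derivative. Differentiating under the integral,
\begin{equation*}
\partial_y V(\mu,y)=\int_0^1\int_{\bR^d}\bigl[s\,[\partial_y\pmu U(\mu,(1-s)z+sy)]^{\top}(y-z)+\pmu U(\mu,(1-s)z+sy)\bigr]\mu(dz)\,ds,
\end{equation*}
and using the symmetry of $\partial_y\pmu U(\mu,\cdot)$ --- a consequence of the gradient-field structure of the L-derivative on the Wasserstein space established in \cite{GaTu} --- the bracketed integrand equals $\tfrac{d}{ds}\bigl[s\,\pmu U(\mu,(1-s)z+sy)\bigr]$, which telescopes in $s$ to $\pmu U(\mu,y)$, independent of $z$. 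This settles $k=1$.

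For the inductive step, apply the $k=1$ result to the map $\mu\mapsto\partial^{k-1}_\mu U(\mu,y_1,\ldots,y_{k-1})$ with $y_1,\ldots,y_{k-1}$ frozen: its L-derivative in $\mu$ is $\partial^k_\mu U(\mu,y_1,\ldots,y_{k-1},y_k)$, which is Lipschitz by hypothesis. Combined with the induction hypothesis $\partial^{k-1}_\mu U(\mu,\mathbf{y})=\partial_{y_1}\cdots\partial_{y_{k-1}}\tfrac{\delta^{k-1}U}{\delta m^{k-1}}(\mu,\mathbf{y})$ and an interchange of $\tfrac{\delta}{\delta m}$ with the spatial derivatives $\partial_{y_1}\cdots\partial_{y_{k-1}}$ (permitted by the inherited regularity), this delivers the claimed identity at order $k$. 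The principal obstacle throughout is step three: the symmetry of $\partial_y\pmu U$ in its spatial argument is not a consequence of Lipschitz continuity alone, but of the fact that L-derivatives arise as geometric gradients on $(\cP_2,W_2)$ and are hence curl-free; without this property the telescoping argument fails. The remaining ingredients --- dominated convergence, Fubini and the symmetrisation trick --- are standard.
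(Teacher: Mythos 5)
The paper offers no proof of this statement to compare against: it is imported wholesale as Theorem 3.3.2 of \cite{tse2019quantitative}. Judged on its own terms, your reconstruction follows the natural route, and the analytic machinery of the base case is sound: the Bernoulli interpolation $\theta+B_\varepsilon(\xi-\theta)$ does realise the law $\mu+\varepsilon(\nu-\mu)$, the fundamental theorem of calculus for the Fr\'echet-differentiable lift combined with $W_2(\rvlaw[{\theta+sB_\varepsilon(\xi-\theta)}],\mu)=O(\sqrt{\varepsilon})$ and dominated convergence does yield the stated Gateaux derivative, and the $s\leftrightarrow 1-s$ symmetrisation correctly shows $\int V(\mu,y)\,\mu(dy)=0$, so that $V$ is a legitimate candidate for $\ld[U]$.

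The genuine gap is the step you yourself flag as the principal obstacle, and it is not closed by the citation of \cite{GaTu}. The structure theorem identifies $\pmu U(\mu,\cdot)$ with an element of the closure in $L^2(\mu)$ of gradients of smooth functions; this determines $\pmu U(\mu,\cdot)$ and constrains it only $\mu$-almost everywhere. Your telescoping identity, however, needs the Jacobian $\partial_v\pmu U(\mu,\cdot)$ to be symmetric along the whole segment $[z,y]$, which for general $\mu$ (a Dirac mass is the extreme case) leaves $\mathrm{supp}\,\mu$ entirely; there the only canonical version of $\pmu U(\mu,\cdot)$ is the one forced by joint Lipschitz continuity in $(\mu,v)$, and its curl-freeness is not a formal consequence of the $L^2(\mu)$-closure statement. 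One must actually argue it, e.g.\ by perturbing to $\mu_\delta=(1-\delta)\mu+\delta\rho$ with $\rho$ equivalent to Lebesgue, where the distributional curl of $\pmu U(\mu_\delta,\cdot)$ vanishes as an $L^2(\mathrm{Leb})$ limit of curls of gradients, and then sending $\delta\to 0^+$ using the joint Lipschitz continuity to transfer curl-freeness to the canonical version at $\mu$; a Fubini argument is then still needed because Rademacher differentiability only holds Lebesgue-a.e.\ while you differentiate along a fixed segment. Note that this symmetry cannot be sidestepped: it is equivalent to $\partial_v\pmu U$ being the Hessian of $\ld[U](\mu,\cdot)$, which is part of the conclusion, so any proof must establish it somewhere. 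Two smaller gaps sit in the induction: the interchange $\frac{\delta}{\delta m}\partial_{y_1}\cdots\partial_{y_{k-1}}\frac{\delta^{k-1}U}{\delta m^{k-1}}=\partial_{y_1}\cdots\partial_{y_{k-1}}\frac{\delta^{k}U}{\delta m^{k}}$ is an exchange of the Gateaux limit with $k-1$ spatial derivatives and is asserted rather than proved, and invoking the induction hypothesis at order $k-1$ uses Lipschitz continuity of $\partial^{k-1}_\mu U$, which is not literally among the stated hypotheses.
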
 
\begin{lemma}[Lemma 2.5 of \cite{chassagneux2019weak}] \label{mk sk,k} 
Let $k \geq 2$. Then $\cM_k( \cP_2 ( \bR^d)) \subseteq \cS_{k,k} ( \cP_2 ( \bR^d))$.
\end{lemma}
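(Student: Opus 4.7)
The plan is, for each $i\in\{1,\hdots,k\}$, to derive the $\cS_{k,k}$ growth bound on $\frac{\delta^i U}{\delta m^i}$ by expressing it as an integral of the bounded $i$-th order L-derivative $\partial^i_\mu U$. Since $U\in\cM_k(\cP_2(\R^d))$ and $i\le k$, Definition \ref{def Mk f} ensures that $\partial^i_\mu U$ exists and is bounded and Lipschitz, so Theorem \ref{eq: L deriv linear relation higher} applies to deliver existence of the $i$-th linear functional derivative $\frac{\delta^i U}{\delta m^i}$ together with the identity
\[
\partial^i_\mu U(\mu,y_1,\hdots,y_i)=\partial_{y_1}\cdots\partial_{y_i}\frac{\delta^i U}{\delta m^i}(\mu,y_1,\hdots,y_i).
\]

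To convert this identity into a pointwise bound on $\frac{\delta^i U}{\delta m^i}$ itself, I would iterate the fundamental theorem of calculus along the ray $[0,1]^i\ni(t_1,\hdots,t_i)\mapsto(t_1y_1,\hdots,t_iy_i)$ starting from the origin of $(\R^d)^i$. The normalization convention \eqref{eq: normalisation linear functional deriatives} forces $\frac{\delta^i U}{\delta m^i}$ to vanish on every coordinate hyperplane $\{y_j=0\}$; differentiating this function only in variables distinct from $y_j$ preserves that vanishing, which is exactly what permits each successive FTC to be started from zero. The outcome is
\[
\frac{\delta^i U}{\delta m^i}(\mu,y_1,\hdots,y_i)=\int_{[0,1]^i}\partial^i_\mu U(\mu,t_1y_1,\hdots,t_iy_i)\bigl(y_1,\hdots,y_i\bigr)\,dt_1\cdots dt_i,
\]
where $\partial^i_\mu U$ is viewed as an $i$-linear form contracted with $(y_1,\hdots,y_i)$.

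Boundedness of $\partial^i_\mu U$ then gives $\bigl|\frac{\delta^i U}{\delta m^i}(\mu,y_1,\hdots,y_i)\bigr|\le C|y_1|\cdots|y_i|$, and since $i\le k$, AM-GM combined with the inequality $|y_j|^i\le 1+|y_j|^k$ yields $|y_1|\cdots|y_i|\le \sum_{j=1}^i(1+|y_j|^k)$. This is the $\cS_{k,k}$ polynomial growth estimate, with no contribution from the $\mu$-moment term actually needed. Measurability of $(y_1,\hdots,y_i)\mapsto\frac{\delta^i U}{\delta m^i}(\mu,\cdot)$ is automatic from continuity of the integrand, which is inherited from the Lipschitz regularity of $\partial^i_\mu U$.

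The main obstacle is the inductive verification underlying the iterated FTC: after integrating in $t_1$ one is left with $y_1\cdot\nabla_{y_1}\frac{\delta^i U}{\delta m^i}(\mu,t_1y_1,y_2,\hdots,y_i)$, and one must check that this still vanishes at $y_j=0$ for every $j\ge 2$ in order to restart the FTC from zero in the $t_2$ direction, and analogously at each subsequent step. This follows because $\frac{\delta^i U}{\delta m^i}$ vanishes identically on $\{y_j=0\}$, and differentiating such a function in directions tangent to that hyperplane preserves the vanishing; the key point is that only $\frac{\delta^i U}{\delta m^i}$ itself, not its partial derivatives, need to satisfy the normalization for the induction to close.
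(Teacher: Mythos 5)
The paper does not actually prove this lemma; it imports it verbatim as Lemma 2.5 of \cite{chassagneux2019weak}, so there is no in-paper proof to compare against line by line. That said, your argument is correct and is essentially the standard one: Theorem \ref{eq: L deriv linear relation higher} gives existence of $\frac{\delta^i U}{\delta m^i}$ together with $\partial^i_\mu U=\partial_{y_1}\cdots\partial_{y_i}\frac{\delta^i U}{\delta m^i}$, the normalisation \eqref{eq: normalisation linear functional deriatives} kills every boundary term in the inclusion--exclusion/iterated-FTC identity, and boundedness of $\partial^i_\mu U$ yields $|\frac{\delta^i U}{\delta m^i}(\mu,y_1,\ldots,y_i)|\le C|y_1|\cdots|y_i|\le C'(1+\sum_j|y_j|^k)$ by AM--GM, which is the $\cS_{k,k}$ bound (correctly, with no need for the $\mu$-moment term). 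This is exactly the device the paper itself deploys at order two in Lemma \ref{M2 Lip cont}, where the rectangle identity plus setting $\tilde x=0$ converts boundedness of $\ptwomu U$ into a bound on $\sld[U]$; your proof is the $i$-fold generalisation of that computation. The one point you rightly flag as the ``main obstacle'' --- that differentiating $\frac{\delta^i U}{\delta m^i}$ only in directions tangent to a coordinate hyperplane $\{y_j=0\}$ preserves its vanishing there --- is handled correctly, and the remaining regularity needed to iterate the fundamental theorem of calculus (existence and continuity of the intermediate mixed partials) is supplied by the Lipschitz bounds in Definition \ref{def Mk f} at the same level of rigour the paper itself accepts in Lemma \ref{M2 Lip cont}. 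I see no gap.
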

The next Lemma gives sufficient conditions in terms of L-derivatives for the hypotheses of Corollary \ref{cortlcs2} to be satisfied.
\begin{lemma} \label{M2 Lip cont} 
Let $U \in \cM_2(\cP_2(\bR^d))$. Then the second order linear functional derivative of $U$ satisfies
$$\exists C<\infty,\;\mu \in \cP_2(\bR^d),\;\forall x,y, \tilde{y} \in \bR^d,\;\bigg| \sld[U](\mu,x,y) - \sld[U](\mu,x,\tilde{y}) \bigg| \leq C |x| |y-\tilde{y}|,$$
Moreover, $U$ satisfies the hypotheses of Corollary \ref{cortlcs2} for each $\ell\ge 4$.
\end{lemma}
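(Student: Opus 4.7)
The proof plan rests on converting the $L$-derivative hypotheses into linear functional derivative estimates via Theorem~\ref{eq: L deriv linear relation higher} and then exploiting the normalisation convention \eqref{eq: normalisation linear functional deriatives}.

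First, because $U \in \cM_2(\cP_2(\bR^d))$, the derivatives $\pmu U$ and $\ptwomu U$ exist and are bounded by some constant $C$, with $\ptwomu U$ moreover jointly Lipschitz in its arguments. Applying Theorem~\ref{eq: L deriv linear relation higher} for $k=1$ and $k=2$ yields the identities
$$\pmu U(\mu,y)=\partial_y\ld[U](\mu,y),\qquad \ptwomu U(\mu,x,y)=\partial_x\partial_y\sld[U](\mu,x,y),$$
so in particular $\partial_y\sld[U]$ exists and is $C^1$ in $x$.

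For the first assertion, the normalisation $\sld[U](\mu,0,y)=0$ differentiated in $y$ yields $\partial_y\sld[U](\mu,0,y)=0$. Integrating the cross derivative along the segment from $0$ to $x$ then gives
$$\partial_y\sld[U](\mu,x,y)=\int_0^1 \ptwomu U(\mu,sx,y)\,x\,ds,$$
whence $|\partial_y\sld[U](\mu,x,y)|\le C|x|$. A further integration in $y$ along the segment joining $\tilde y$ to $y$ produces the announced bound $|\sld[U](\mu,x,y)-\sld[U](\mu,x,\tilde y)|\le C|x|\,|y-\tilde y|$.

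For the second assertion, I would check the hypotheses of Corollary~\ref{cortlcs2} one by one. Arguing in the same spirit from $\ld[U](\mu,0)=0$ and the boundedness of $\pmu U$, one obtains $|\ld[U](\mu,x)|\le C|x|$; a double integration based on both normalisations $\sld[U](\mu,0,y)=\sld[U](\mu,x,0)=0$ analogously produces $|\sld[U](\mu,x,y)|\le C|x||y|$. For $\ell\ge 4$, Young's inequality immediately reduces these estimates to the growth conditions defining $U\in\cS_{1,\ell/2}(\cP_\ell(\bR^d))\cap\cS_{2,\ell}(\cP_\ell(\bR^d))$. The uniform (in $\mu$ and $x$) Hölder continuity of $y\mapsto\sld[U](\mu,x,y)/(1+|x|^{\ell/2})$ with exponent $\alpha=1$ is then a direct consequence of the first assertion, since $|x|/(1+|x|^{\ell/2})$ is bounded once $\ell/2\ge 1$.

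The only delicate point is the legitimacy of the two successive applications of the fundamental theorem of calculus, which requires that $\partial_y\sld[U]$ be continuously differentiable in $x$ and that $\sld[U]$ be continuously differentiable in $y$. This regularity is built into the class $\cM_2$ through the identity $\partial_x\partial_y\sld[U]=\ptwomu U$ combined with the Lipschitz assumption \eqref{eq:Lipf} of Definition~\ref{def Mk f}. The rest of the argument is straightforward bookkeeping.
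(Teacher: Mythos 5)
Your argument is correct and follows essentially the same route as the paper's proof: both rest on the identity $\ptwomu U=\partial_x\partial_y\sld[U]$ from Theorem~\ref{eq: L deriv linear relation higher}, the normalisation convention \eqref{eq: normalisation linear functional deriatives} at $x=0$, and the boundedness of $\ptwomu U$, the only difference being that you perform the two integrations sequentially while the paper uses a single second-order finite-difference formula, and that you rederive the growth bounds that the paper obtains by citing Lemma~\ref{mk sk,k}.
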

\begin{proof}
For any $C^{2}$ function $F: \bR^{2d} \to \bR$ let $\nabla_1F$ and $\nabla^2_{12} F$ denote the vector and the matrix with respective entries $\frac{\partial}{\partial_{z_i}}F(z_1,\hdots,z_d,z_{d+1},\hdots,z_{2d})$ and $\frac{\partial^2}{\partial_{z_i}\partial_{z_{d+j}}}F(z_1,\hdots,z_d,z_{d+1},\hdots,z_{2d})$, $1\le i,j\le d$. For points $x,\tilde x, y, \tilde y\in \bR^d$,
\begin{eqnarray}
&&  F(x, y) - F(\tilde x, y)- F(x,\tilde y) + F(\tilde x,\tilde y) \nonumber \\
& = & \int_0^1 (x-\tilde x) .\nabla_{1}F( (1-s)\tilde x+ sx,y)\,ds - \int_0^1 (x-\tilde x) .\nabla_1F( (1-s)\tilde x+ sx,\tilde y)\,ds  \nonumber \\
& = & \int_0^1 \int_0^1 (x-\tilde x) \cdot \nabla_{12}^2 F( (1-s)\tilde x+ sx, (1-t)\tilde y+ ty)(y-\tilde y) \, dt \, ds. \nonumber 
\end{eqnarray}
Therefore, by Theorem \ref{eq: L deriv linear relation higher},
\begin{eqnarray}
&&  \sld[U](\mu,x, y) - \sld[U](\mu,\tilde x, y)- \sld[U](\mu,x,\tilde y) + \sld[U](\mu,\tilde x,\tilde y) \nonumber \\
& = & \int_0^1 \int_0^1 (x-\tilde x) \cdot \ptwomu U(\mu, (1-s)\tilde x+ sx, (1-t)\tilde y+ ty)(y-\tilde y) \, dt \, ds. \nonumber
\end{eqnarray}
By setting $\tilde{x}:=0$, the normalisation condition \eqref{eq: normalisation linear functional deriatives} gives 
$$ \sld[U](\mu,x, y) - \sld[U](\mu,x,\tilde y) =  \int_0^1 \int_0^1 x \cdot \ptwomu U(\mu, sx, (1-t)\tilde y+ ty)(y-\tilde y) \, dt \, ds,$$ 
from which we conclude the result by the boundedness of $\ptwomu U$.

Let now $\ell\ge 2$. By Lemma \ref{mk sk,k}, $U\in\cS_{2,2} ( \cP_2 ( \bR^d))\subset\cS_{2,2} ( \cP_\ell ( \bR^d))$. Moreover, the first statement ensures that the H\"older continuity condition in Corollary \ref{cortlcs2} is satisfied for $\alpha=1$ (Lipschitz continuity). Last, when $\ell\ge 4$, $\cS_{2,2} ( \cP_\ell ( \bR^d))\subset\cS_{1,\ell/2}(\cP_\ell ( \bR^d))$ and all the hypotheses in this corollary are satisfied.
\end{proof}
\subsection{Mean-field fluctuation}
We define Lipschitz-continuous (w.r.t. the product topology of $\cP_2(\bR^d) \times \bR^d$) functions $b: \bR^d \times \mathcal{P}_2(\bR^d) \to \bR^d$ and $ \sigma : \bR^d \times \mathcal{P}_2(\bR^d) \to \bR^{d} \otimes \bR^{d'}$ as the drift and diffusion coefficients respectively. Let $(\Omega, \cF, \bP)$ be an atomless, complete probability space, on which we consider an interacting particle system
\begin{equation} \label{eq:particlesystem} \begin{cases} 
      Y^{i,N}_t = \xi_i + \int_0^t  b(Y^{i,N}_s, \nlaw[s]) \,ds + \int_0^t \sigma(Y^{i,N}_s, \nlaw[s]) \,dW^i_s, \quad 1 \leq i \leq N, \quad t \ge 0, \\
      \\
       \nlaw[s] := \frac{1}{N} \sum_{i=1}^N \delta_{Y^{i,N}_s}, 
   \end{cases}
\end{equation}
 where $W^i, 1 \leq i \leq N,$ are independent ${d'}$-dimensional Brownian motions and $\xi_i,  1 \leq i \leq N,$ are i.i.d. random variables  with law $\nu \in \cP_2(\bR^d)$ that are also independent of $W^1, \ldots, W^N$. 
 This type of equations provides a probabilistic representation to many high-dimensional PDEs arising from kinetic theory and mean-field games. 
A standard approximation of this particle system is through the mean-field limit of $\nlaw[t]$ (by the theory of propagation of chaos), which leads to the consideration of a corresponding McKean-Vlasov SDE given by
\begin{equation} \label{eq:MVSDE} \begin{cases} 
      X_t = \xi + \int_0^t b(X_s, \law[s]) \,ds + \int_0^t \sigma (X_s, \law[s]) \,dW_s, \quad \quad t\ge 0, \\
      \\
       \law[s] := \text{Law} (X_s), 
   \end{cases}
\end{equation}
where $W$ is a ${d'}$-dimensional Brownian motion and $\xi \sim \nu$ is independent of $W$.  Analyses of the approximation of  \eqref{eq:particlesystem} by the mean-field limiting equation \eqref{eq:MVSDE} are widely considered in the literature, such as  \cite{bossy1997stochastic},   \cite{meleard1996asymptotic} and \cite{sznitman1991topics}. In particular, by \cite{sznitman1991topics}, the condition of Lipschitz continuity of $b$ and $\sigma$ ensures existence and uniqueness of the solutions to \eqref{eq:particlesystem} and 
\eqref{eq:MVSDE} respectively. 

We consider the nonlinear fluctuation between the standard particle system \eqref{eq:particlesystem} and its standard McKean-Vlasov limiting equation \eqref{eq:MVSDE} under non-linear functionals $\Phi \in \cM_k( \cP_2(\bR^d) )$, i.e. we consider the limiting distribution of the process
$$  F^N:= \sqrt{N}	 \big[ \Phi(\nlaw[\cdot])  - \Phi ( \law[\cdot] ) \big]$$ 
in the space $C(\bR_{+}, \bR)$. 

The main analysis depends on the following function:
$\cV : \bR_+\times \mathcal{P}_2 (\bR^d) \to \bR $ defined by 
\begin{equation}  \cV( t, \rvlaw[\theta]) = \Phi \big( \rvlaw[{X^{\theta}_t}] \big) \label{eq: defofflow}
\end{equation}
where, for $\theta$ an $\bR^d$-valued random vector independent of $W$, $$X^{\theta}_t= \theta + \int_0^t b(X^{\theta}_s, \rvlaw[{X^{\theta}_s}]) \,ds + \int_0^t \sigma (X^{\theta}_s, \rvlaw[{X^{\theta}_s}]) \,dW_s,\quad \quad t\ge 0.$$
It is proven in Theorem 7.2 of  \cite{buckdahn2017mean} that, if $\nu \in \cP_2(\bR^d)$, $\Phi \in \cM_2( \cP_2(\bR^d) )$ and $b_i,\sigma_{i,j} \in \cM_2( \bR^d \times \cP_2(\bR^d) ),$ for $i \in \{1, \ldots, d \}$ and $j \in \{1, \ldots, {d'} \}$, then $\cV$ satisfies the \emph{master equation} given by
\begin{equation}  \label{eq pde measure} \begin{cases} 
       \partial_s \cV(s, \mu) =\int_{\bR^d}  \big[ \partial_{\mu} \cV (s, \mu) (x) \cdot b( x, \mu)  + \frac{1}{2} \text{Tr} \big( \partial_v \partial_{\mu} \cV  ( s,\mu) ( x) a( x, \mu) \big) \big] \, \mu (dx) , & s \ge 0, \\
      &  \\
      \cV(0,\mu) = \Phi ( \mu), &    \end{cases}
\end{equation} 
where 
\begin{equation} a(x, \mu):= \sigma(x,\mu) \sigma(x,\mu)^T. \label{eq def of a }  \end{equation} 
By the initial condition of \eqref{eq: defofflow}, along with the definition of $\cV$, we have the decomposition
\begin{eqnarray}
\Phi(\nlaw[t])  - \Phi ( \law[t] ) & = &   \cV(0,\nlaw[t]) - \cV(t,\nu) \nonumber \\
& = & \big( \cV(0,\nlaw[t]) - \cV(t,\nlaw[0]) \big)+ \big( \cV(t,\nlaw[0]) - \cV(t,\nu) \big). \label{eq:errordecomp}
\end{eqnarray}
To treat the first term, we define a finite dimensional projection  $V:[0,t]\times(\bR^d)^N \to \bR$ by
\begin{equation}
    V(s  ,x_1,\ldots,x_N):= \cV \bigg( t-s, \frac{1}{N}\sum_{i=1}^N\delta_{x_i} \bigg). \label{eq:finitedimproj}
\end{equation} 
Then 
$$ \cV(0,\nlaw[t]) - \cV(t,\nlaw[0]) = V(t, Y^{1,N}_t, \ldots, Y^{N,N}_t) - V(0, Y^{1,N}_0, \ldots, Y^{N,N}_0).$$
We can now apply It\^{o}'s formula to this equality. Proposition 3.1 of \cite{chassagneux2014probabilistic} allows us to conclude that  $V$ is differentiable in the time component and twice-differentiable in the space components. Moreover, Proposition 3.1 of \cite{chassagneux2014probabilistic} expresses the first and second order partial derivatives of $V$ in terms of the L-derivatives of $\cV$. This allows us to use \eqref{eq pde measure} to obtain a cancellation in the L-derivatives (except the second order term). 

We now present the details of the above discussion (found in the proof of Theorem B.2 in \cite{szpruch2019antithetic}) as follows. Setting $\mathbf{Y}^N = (Y^{1,N}, Y^{2,N}, \ldots, Y^{N,N})$, we have

\begin{eqnarray}
&&  {\cV}(0,\nlaw[t]) - {\cV}(t,\nlaw[0])   \nonumber
  \\
& = & \bigg[ \int_0^{t} \frac{ \partial V}{ \partial s} (s, \mathbf{Y}^N_s) + \sum_{i=1}^N \frac{\partial V}{\partial x_i} ( s, \mathbf{Y}^N_s) \cdot  b \big( Y^{i,N}_s, \nlaw[s] \big)  
+ \frac{1}{2}  \text{Tr} \bigg( a \big( Y^{i,N}_s, \nlaw[s]  \big)   \sum_{i=1}^N \frac{\partial^2 V}{\partial x^2_i}  (s,\mathbf{Y}^N_s) \bigg) \,ds \bigg]   \nonumber \\
&& +  \sum_{i=1}^N \int_0^{t}  \sigma( Y^{i,N}_s, \nlaw[s] )^T \frac{\partial V}{\partial x_i} ( s,\mathbf{Y}^N_s) \cdot dW_s^i  \nonumber \\
& = &  \int_0^{t}  \partial_s {\cV} \big( s, \nlaw[s]  \big) + \sum_{i=1}^N \Bigg[ \frac{1}{N} \partial_{\mu} {\cV} \big( s, \nlaw[s]  \big)( {Y^{i,N}_s} ) \cdot b \big( Y^{i,N}_s, \nlaw[s]  \big)  \nonumber  \\
&& \, \, + \frac{1}{2}  \text{Tr} \Bigg( a \big( Y^{i,N}_s, \nlaw[s]  \big) \bigg( \frac{1}{N} \partial_{v}   \partial_{\mu} {\cV} \big( s, \nlaw[s]  \big)  (Y^{i,N}_s)  + \frac{1}{N^2} \partial^2_{\mu}  {\cV} \big( s, \nlaw[s] \big)  (Y^{i,N}_s, Y^{i,N}_s) \bigg) \Bigg) \Bigg] \,ds  \nonumber \\
&& +  \frac{1}{N} \sum_{i=1}^N \int_0^{t}  \sigma( Y^{i,N}_s, \nlaw[s] )^T \partial_{\mu} {\cV} \big( s, \nlaw[s]  \big) (  Y^{i,N}_s) \cdot dW_s^i. \label{eq: eqn Ito} 
\end{eqnarray}
By \eqref{eq:errordecomp}, \eqref{eq: eqn Ito}  and PDE \eqref{eq pde measure} evaluated at  $(s,\nlaw[s])_{s \in [0,t]}$, the expression simplifies to
\begin{eqnarray} 
	\Phi(\nlaw[t])  - \Phi ( \law[t] )  & =  & \big( \cV(t,\nlaw[0]) - \cV(t,\nu) \big)  \nonumber \\
	 && + \int_0^{t} \frac{1}{2} \Bigg[\frac{1}{N^2} \sum_{i=1}^N  \text{Tr} \bigg( a \big(Y^{i,N}_s, \nlaw[s]  \big)    \partial^2_{\mu}  {\cV} \big( s, \nlaw[s]  \big)  (Y^{i,N}_s, Y^{i,N}_s) \bigg) \Bigg] \,ds \nonumber  \\
&& +\frac{1}{N}\sum_{i=1}^N   \int_0^{t}  \sigma( Y^{i,N}_s, \nlaw[s] )^T \partial_{\mu} {\cV} \big( s, \nlaw[s]  \big) ( Y^{i,N}_s) \cdot dW_s^i.  \nonumber
\end{eqnarray}

The following proposition states this result rigorously. 
\begin{proposition} \label{thm reg} 
Let $k \geq 2$. Suppose that $\nu \in \cP_2(\bR^d)$, $\Phi \in \cM_k( \cP_2(\bR^d) )$ and $b_i,\sigma_{i,j} \in \cM_k( \bR^d \times \cP_2(\bR^d) ),$ for $i \in \{1, \ldots, d \}$ and that $j \in \{1, \ldots, {d'} \}$. Then, for each $T>0$, $ \cV \in \cM_k([0,T] \times \cP_2(\bR^d))$ and the marginal fluctuation at time $t \in [0,T]$ can be expressed as \begin{eqnarray}
 \sqrt{N}	 \Big[ \Phi(\nlaw[t])  - \Phi ( \law[t] ) \Big] &  =   &  \sqrt{N} \big( \cV(t,\nlaw[0]) - \cV(t,\nu) \big)  \nonumber \\ 
 & & + \int_0^{t} \frac{1}{2} \Bigg[\frac{1}{N^{3/2}} \sum_{i=1}^N  \text{Tr} \bigg( a \big(Y^{i,N}_s, \nlaw[s]  \big)    \partial^2_{\mu}  {\cV} \big( t-s, \nlaw[s]  \big)  (Y^{i,N}_s, Y^{i,N}_s) \bigg) \Bigg] \,ds \nonumber  \\
&& +\frac{1}{\sqrt{N}}\sum_{i=1}^N   \int_0^{t}  \sigma( Y^{i,N}_s, \nlaw[s] )^T \partial_{\mu} {\cV} \big( t-s, \nlaw[s]  \big) ( Y^{i,N}_s) \cdot dW_s^i. \label{eq:fluct} 
\end{eqnarray} 
\end{proposition}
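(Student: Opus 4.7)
The strategy mirrors the derivation sketched in the paragraphs preceding the statement, which I would formalize in two stages: first establish the regularity $\cV \in \cM_k([0,T]\times \cP_2(\bR^d))$ together with the master equation \eqref{eq pde measure}, and then apply It\^o's formula to the finite-dimensional projection $V$ from \eqref{eq:finitedimproj}, using the master equation to cancel the bulk of the drift.

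For the regularity claim, I would start from $k=2$, which is exactly the content of Theorem 7.2 of \cite{buckdahn2017mean}, simultaneously giving \eqref{eq pde measure}. For $k>2$, the plan is to iterate the argument: differentiate the stochastic flow $\theta\mapsto X_t^\theta$ an additional number of times, obtaining that the successive L-derivatives satisfy linear McKean--Vlasov type SDEs whose coefficients are constructed from the $\cM_k$-data $(b,\sigma)$. Boundedness and Lipschitz continuity of derivatives up to order $k$ then follow from Gr\"onwall estimates, and the chain rule identifies $\pnmu \cV(t,\mu)$ in terms of derivatives of $\Phi$ and $(b,\sigma)$; joint continuity in time and measure is a consequence of the same estimates. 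Alternatively one could invoke higher-order analogues available in \cite{chassagneux2019weak} or \cite{crisan2017smoothing}.

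With the regularity secured, Proposition 3.1 of \cite{chassagneux2014probabilistic} translates the classical derivatives of the empirical projection $V$ into L-derivatives of $\cV$:
\begin{align*}
\partial_s V(s,\mathbf{x}) &= -\partial_s \cV\bigl(t-s,\,\tfrac{1}{N}\textstyle\sum_j \delta_{x_j}\bigr),\\
\partial_{x_i} V(s,\mathbf{x}) &= \tfrac{1}{N}\,\pmu\cV\bigl(t-s,\,\tfrac{1}{N}\textstyle\sum_j \delta_{x_j}\bigr)(x_i),\\
\partial^2_{x_i} V(s,\mathbf{x}) &= \tfrac{1}{N}\,\partial_v\pmu\cV(t-s,\cdot)(x_i) + \tfrac{1}{N^2}\,\ptwomu\cV(t-s,\cdot)(x_i,x_i).
\end{align*}
Applying It\^o's formula to $V(s,\mathbf{Y}^N_s)$ on $[0,t]$ and substituting, the $\partial_s V$ contribution together with the drift and the $N^{-1}$-part of the second-order contribution assemble into
\[
-\partial_s\cV(t-s,\nlaw[s]) + \int\bigl[\pmu\cV(t-s,\nlaw[s])(x)\cdot b(x,\nlaw[s]) + \tfrac{1}{2}\mathrm{Tr}\bigl(a(x,\nlaw[s])\partial_v\pmu\cV(t-s,\nlaw[s])(x)\bigr)\bigr]\,\nlaw[s](dx),
\]
which vanishes by \eqref{eq pde measure} evaluated at $(t-s,\nlaw[s])$. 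What remains in the bounded variation part is the $N^{-2}$ trace term in $\ptwomu\cV$ and in the martingale part the $N^{-1}$ stochastic integrals. Combining with the telescoping identity \eqref{eq:errordecomp} and multiplying by $\sqrt{N}$ yields \eqref{eq:fluct}.

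The chief technical obstacle is the first step: verifying $\cM_k$-regularity of $\cV$ for $k>2$, since \cite{buckdahn2017mean} only covers $k=2$; the iteration on derivatives of the flow must respect both measure and spatial differentiations, and one has to track the non-commutativity of $\partial_v$ and $\pmu$ noted after \eqref{eq:generalformulamixed}. Once the regularity is in hand, the It\^o computation and the cancellation through the master equation are essentially algebraic.
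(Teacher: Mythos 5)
Your proposal is correct and follows essentially the same route as the paper: the paper's own proof simply cites Theorem 2.15 of \cite{chassagneux2019weak} for the $\cM_k$-regularity of $\cV$ (your proposed iteration of the flow-differentiation argument is what that reference carries out, and you also name it as a fallback) and (B.7) of \cite{szpruch2019antithetic} for the identity \eqref{eq:fluct}, whose derivation via It\^o's formula applied to the projection $V$, Proposition 3.1 of \cite{chassagneux2014probabilistic}, and cancellation through the master equation is exactly the computation you write out (and which the paper itself sketches just before the statement). Your bookkeeping of the time reversal, with the master equation evaluated at $(t-s,\nlaw[s])$, is in fact slightly more careful than the paper's displayed computation.
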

\begin{proof}
The statement concerning the regularity of $\cV$ comes from Theorem 2.15 of \cite{chassagneux2019weak} (see also Theorem 7.2 in \cite{buckdahn2017mean} for the special case $k=2$ and \cite{tse2021higher} for a related proof from the perspective of PDE analysis).  Equation \eqref{eq:fluct}  comes from (B.7) of \cite{szpruch2019antithetic}.
\end{proof}
{
\begin{lemma} \label{lemma four moment of time diff} 
Suppose that $\Phi \in \cM_5( \cP_2(\bR^d) )$ and that $b_i,\sigma_{i,j} \in \cM_5( \bR^d \times \cP_2(\bR^d) ),$ for $i \in \{1, \ldots, d \}$ and $j \in \{1, \ldots, {d'} \}$. Suppose that $b$ and $\sigma$ are uniformly bounded.  Let $\nu \in \cP_{12}(\bR^d)$. Then the function $\cV$ $($defined by \eqref{eq: defofflow}$)$ satisfies
$$ \bE \Big| \big( \cV(t_2, \mu^N_0) - \cV(t_2, \nu) \big) - \big( \cV(t_1, \mu^N_0) - \cV(t_1, \nu) \big) \Big|^4 \leq C \frac{|t_1 - t_2|^4}{N^2},$$ 
for every $t_1, t_2 \in [0,T],$ for some $C>0$. 
\end{lemma}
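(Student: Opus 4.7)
The plan is to reduce the fourth-moment time-difference estimate to a uniform-in-time estimate on the fourth moment of $\Psi(s,\mu^N_0)-\Psi(s,\nu)$ for a smooth functional $\Psi$ coming from the master equation, which is then obtained by a Taylor expansion of $\Psi$ in the measure around $\nu$.

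First, by Proposition \ref{thm reg} applied with $k=5$, $\cV\in\cM_5([0,T]\times\cP_2(\bR^d))$, and the master equation \eqref{eq pde measure} gives $\cV(t_2,\mu)-\cV(t_1,\mu)=\int_{t_1}^{t_2}\Psi(s,\mu)\,ds$ with
\begin{equation*}
\Psi(s,\mu):=\int_{\bR^d}\big[\pmu\cV(s,\mu)(x)\cdot b(x,\mu)+\tfrac{1}{2}\mathrm{Tr}(\partial_v\pmu\cV(s,\mu)(x)\,a(x,\mu))\big]\mu(dx).
\end{equation*}
Applying Jensen's inequality in the time variable, $\bE\big|\int_{t_1}^{t_2}[\Psi(s,\mu^N_0)-\Psi(s,\nu)]\,ds\big|^4\le|t_2-t_1|^3\int_{t_1}^{t_2}\bE|\Psi(s,\mu^N_0)-\Psi(s,\nu)|^4\,ds$, so it suffices to prove $\sup_{s\in[0,T]}\bE|\Psi(s,\mu^N_0)-\Psi(s,\nu)|^4\le C/N^2$.

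Second, using $\cV\in\cM_5$, $b,\sigma\in\cM_5$ with $b,\sigma$ bounded, and the product/composition rules for L-derivatives (in particular Theorem \ref{chain rule ii} together with Example 5.2.2.3 of \cite{carmona2017probabilistic}), the integrand in $\Psi$ is bounded and belongs to $\cM_4$ jointly in $(x,\mu)$, so that $\Psi(s,\cdot)\in\cM_4(\cP_2(\bR^d))\subseteq\cS_{4,4}(\cP_2(\bR^d))$ uniformly in $s\in[0,T]$ by Lemma \ref{mk sk,k}. By Theorem \ref{eq: L deriv linear relation higher} together with the normalisation \eqref{eq: normalisation linear functional deriatives}, the linear functional derivatives $\ld[\Psi], \sld[\Psi], \tld[\Psi], \frac{\delta^4\Psi}{\delta m^4}$ exist with the expected polynomial growth $|\frac{\delta^j\Psi}{\delta m^j}(s,\mu,y_1,\ldots,y_j)|\le C\prod_{i=1}^j(1+|y_i|)$ (uniformly in $s,\mu$).

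Third, I decompose $\Psi(s,\mu^N_0)-\Psi(s,\nu)=L_N(s)+Q_N(s)+R_N^{(3)}(s)$ via Taylor's formula in the measure (Lemma 2.2 of \cite{chassagneux2019weak}), where $L_N(s)=\frac{1}{N}\sum_i[\ld[\Psi](s,\nu,\xi_i)-\bE\ld[\Psi](s,\nu,\xi_1)]$ is the linear part, $Q_N(s)$ the quadratic part against $\sld[\Psi](s,\nu,\cdot,\cdot)$, and $R_N^{(3)}(s)$ the cubic remainder involving $\tld[\Psi]((1-t)\nu+t\mu^N_0,\cdot)$. The linear term is a centered i.i.d.\ average with $\bE|\ld[\Psi](s,\nu,\xi_1)|^4<\infty$ thanks to $\nu\in\cP_{12}\subset\cP_4$, so a direct fourth-moment computation yields $\bE[L_N(s)^4]\le C/N^2$. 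The quadratic term is a degenerate U-statistic of order two with kernel of growth $|y_1||y_2|$ plus a diagonal contribution of size $O(1/N)$; standard combinatorial estimates, using $\nu\in\cP_8$ to ensure the kernel is in $L^4(\nu^{\otimes 2})$, give $\bE[Q_N(s)^4]\le C/N^4$.

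The main obstacle is the bound on $R_N^{(3)}$, because the kernel $\tld[\Psi]((1-t)\nu+t\mu^N_0,\mathbf{y})$ depends on $\mu^N_0$ through the measure argument, preventing a direct use of the centering of $(\mu^N_0-\nu)^{\otimes 3}$. The plan is to split $\tld[\Psi]((1-t)\nu+t\mu^N_0,\cdot)=\tld[\Psi](\nu,\cdot)+[\tld[\Psi]((1-t)\nu+t\mu^N_0,\cdot)-\tld[\Psi](\nu,\cdot)]$: the first piece is a degenerate U-statistic of order three with kernel of growth $|y_1||y_2||y_3|$, whose fourth moment requires the kernel to belong to $L^4(\nu^{\otimes 3})$, which is precisely where the assumption $\nu\in\cP_{12}$ is used, and then standard degenerate-U-statistic bounds give $O(1/N^6)$; the second piece is controlled by the Lipschitz continuity of $\tld[\Psi]$ in the measure argument (which follows from the boundedness of $\frac{\delta^4\Psi}{\delta m^4}$, itself a consequence of $\Psi\in\cM_4$) combined with moment estimates on $\int|y|^\ell|\mu^N_0-\nu|(dy)$, yielding a further negligible contribution. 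Summing all contributions gives $\bE|\Psi(s,\mu^N_0)-\Psi(s,\nu)|^4\le C/N^2$ uniformly in $s$, and combining with the Jensen step of paragraph one completes the proof.
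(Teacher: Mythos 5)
Your first two steps---writing $\cV(t_2,\mu)-\cV(t_1,\mu)=\int_{t_1}^{t_2}\partial_t\cV(s,\mu)\,ds$ via the master equation \eqref{eq pde measure}, applying Jensen/H\"older in time, and verifying that $\partial_t\cV$ inherits enough regularity in the measure variable---are exactly what the paper does. The difference is that at the point where you need $\sup_{s}\bE|\partial_t\cV(s,\mu^N_0)-\partial_t\cV(s,\nu)|^4\le C/N^2$, the paper simply invokes Lemma 3.2 of \cite{szpruch2019antithetic} (which requires precisely $\cM_3$ regularity and $\nu\in\cP_{12}$), whereas you attempt to re-prove that bound from scratch by a Taylor expansion in the measure. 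Your treatment of the linear and quadratic terms is fine (the linear term is indeed the one that dictates the $N^{-2}$ rate), but your handling of the cubic remainder has a genuine gap.

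The problem is the piece $\int_0^1(1-t)^2\int\big[\tld[\Psi]((1-t)\nu+t\mu^N_0,\mathbf{y})-\tld[\Psi](\nu,\mathbf{y})\big](\mu^N_0-\nu)^{\otimes 3}(d\mathbf{y})\,dt$. You propose to control it by the Lipschitz continuity of $\tld[\Psi]$ in the measure together with ``moment estimates on $\int|y|^\ell|\mu^N_0-\nu|(dy)$''. But, as the paper's own remark after Theorem \ref{CLT measure} points out, $\int(1+|y|^\ell)|\mu^N_0-\nu|(dy)$ does \emph{not} tend to $0$ when $\nu$ is not discrete, so pairing it with a total-variation-type modulus of continuity yields only an $O(1)$ bound. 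If instead you use the $W_2$-Lipschitz continuity coming from $\cM_4$, the crude bound is $|R^{\mathrm{diff}}|\lesssim W_2(\mu^N_0,\nu)\cdot\big(\int(1+|y|)|\mu^N_0-\nu|(dy)\big)^3$, and since the second factor is only $O(1)$ you would need $\bE[W_2^4(\mu^N_0,\nu)]\lesssim N^{-2}$; by the known (dimension-dependent) rates for empirical measures in $W_2$ this fails for $d\ge 3$. Iterating the Taylor expansion one more order does not help either: the fourth-order term $\int\frac{\delta^4\Psi}{\delta m^4}(\cdot,\mathbf{y})(\mu^N_0-\nu)^{\otimes 4}(d\mathbf{y})$ has the same measure-dependent-kernel problem and a crude bound again gives only $O(1)$. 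This is exactly the ``mixed remainder'' difficulty highlighted in the paper's introduction; the cited Lemma 3.2 of \cite{szpruch2019antithetic} (like Lemma 5.10 of \cite{delarue2018master}, or the sequential/leave-out decomposition used in the third step of the proof of Theorem \ref{CLT measure}) resolves it by restoring independence before centering, e.g.\ by replacing $\mu^N_0$ in the kernel by a leave-out empirical measure at a total-variation cost of order $1/N$. As written, your remainder estimate does not close, so you should either supply such an argument or simply cite the fourth-moment bound as the paper does.
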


\begin{proof}
For simplicity of notations, the proof is presented in dimension 1.
By \eqref{eq pde measure}, $\partial_t \cV$ exists and is given by
$$ \partial_t \cV(t, \mu) =\int_{\bR}  \bigg[ \partial_{\mu} \cV (t, \mu) (x)  b( x, \mu)  + \frac{1}{2}   \partial_x \partial_{\mu} \cV  ( t,\mu) ( x) a( x, \mu)  \bigg] \, \mu (dx). $$
By Proposition \ref{thm reg}, $\cV \in \cM_5([0,T] \times \cP_2(\bR^d))$. By part (ii) of Theorem 3.2.3 of \cite{tse2019quantitative}, 
\begin{eqnarray*}
&& \pmu \bigg[ \partial_{\mu} \cV (t, \mu) (x)  b( x, \mu)  + \frac{1}{2}   \partial_x \partial_{\mu} \cV  ( t,\mu) ( x) a( x, \mu) \bigg] (y) \nonumber \\
& = & \partial^2_{\mu} \cV (t, \mu) (x,y) b(x, \mu) +  \partial_{\mu} \cV (t, \mu) (x) \pmu b( x, \mu)(y) \nonumber \\
&& + \, \frac{1}{2} \partial_x \ptwomu \cV(t, \mu)(x,y) a(x,\mu) + \frac{1}{2}   \partial_x \partial_{\mu} \cV  ( t,\mu) ( x) \pmu a( x, \mu) (y). 
\end{eqnarray*}
By the hypotheses of Lemma \ref{lemma four moment of time diff}, we can apply Example 3 of Section 5.2.2 of \cite{carmona2017probabilistic} to yield
\begin{eqnarray*}
\pmu \big[ \partial_t \cV(t, \mu) \big] (y) & = & \partial_y \Big[ \partial_{\mu} \cV (t, \mu) (y)  b( y, \mu)  + \frac{1}{2}   \partial_y \partial_{\mu} \cV  ( t,\mu) ( y) a( y, \mu) \Big] \nonumber \\
&& + \int_{\bR} \bigg[ \partial^2_{\mu} \cV (t, \mu) (x,y) b(x, \mu) +  \partial_{\mu} \cV (t, \mu) (x) \pmu b( x, \mu)(y) \nonumber \\
&& + \, \frac{1}{2} \partial_x \ptwomu \cV(t, \mu)(x,y) a(x,\mu) + \frac{1}{2}   \partial_x \partial_{\mu} \cV  ( t,\mu) ( x) \pmu a( x, \mu) (y) \bigg] \, \mu(dx). 
\end{eqnarray*}
One can easily check that
$$ \sup_{t \in [0,T], \, \mu \in \cP_2(\bR^d), \, y \in \bR^d} \Big| \pmu \big[ \partial_t \cV(t, \mu) \big] (y) \Big| < +\infty  $$ 
and
$$ \sup_{t \in [0,T]} \Big| \pmu \big[ \partial_t \cV(t, \mu_1) \big] (y_1)  - \pmu \big[\partial_t \cV(t, \mu_2) \big] (y_2)  \Big| \leq C \big( |y_1 - y_2| + W_2(\mu_1, \mu_2) \big),$$ 
for some finite constant $C$, with the domination of the integral with respect to $\mu_1-\mu_2$ of the function of $x$ (with other arguments frozen in $\mu_1$ and $y_1$) coming from Lipschitz continuity, Kantorovitch-Rubinstein duality and the inequality $W_1\le W_2$.

Iterating this argument for higher order derivatives of $\partial_t \cV$ up to order 3,
we deduce that $\partial_t \cV \in \cM_3([0,T] \times \cP_2(\bR^d))$. 

Lemma 3.2 in \cite{szpruch2019antithetic} states that, for any function $f \in \cM_3(\cP_2(\bR^d))$, measure $m_0 \in \cP_{12}(\bR^d)$ and $m^N= \frac{1}{N} \sum_{i=1}^N \delta_{{\zeta}_i}$, where ${\zeta}_1, \ldots, {\zeta}_N$ are i.i.d samples with law $m_0$, there exists an absolute constant $C>0$ (which does not depend on $f$, ${\zeta}_1, \ldots, {\zeta}_N$ and $m_0$) such that  
\begin{equation} \bE \big[ \big| f( m^N)- f(m_0) \big|^4 \big] \leq \frac{C}{N^2} \prod_{i=1}^3 \Big( 1+ \| \partial^i_{\mu} f \|^4_{\infty}  \Big) \bigg( 1+ \int_{\bR^d} |x|^{12} \, m_0 (dx) \bigg) . \label{cite 4th moment bound} \end{equation}

Take any $t_1, t_2 \in [0,T]$ such that $t_1 < t_2$. By \eqref{cite 4th moment bound} and H\"{o}lder's inequality,
there exists an absolute constant $C>0$ (which does not depend on $\cV$, $\mu^N_0$ and $\nu$) such that
\begin{eqnarray}
&& \bE \Big| \big( \cV(t_2, \mu^N_0) - \cV(t_2, \nu) \big) - \big( \cV(t_1, \mu^N_0) - \cV(t_1, \nu) \big) \Big|^4 \nonumber \\
& = & \bE \bigg| \int_{t_1}^{t_2} \big[ \partial_t \cV(t, \mu^N_0) - \partial_t \cV(t, \nu) \big] \,dt \bigg|^4 \nonumber \\
& \leq & |t_2 - t_1|^3 \int_{t_1}^{t_2} \bE \big| \partial_t \cV(t, \mu^N_0) - \partial_t \cV(t, \nu) \big|^4 \,dt \nonumber \\
& \leq & |t_2 - t_1|^4 \bigg[ \frac{C}{N^2} \prod_{i=1}^3 \Big( 1+ \sup_{t \in [0,T]} \| \partial^i_{\mu} (\partial_t \cV(t, \cdot)) \|^4_{\infty}  \Big) \bigg( 1+ \int_{\bR^d} |x|^{12} \, \nu (dx) \bigg) \bigg] . \nonumber 
\end{eqnarray}
\end{proof}
}
The following theorem concerns the limiting distribution of $F^N$.
\begin{theorem}
Suppose that $\Phi \in \cM_{{5}}( \cP_2(\bR^d) )$ and that $b_i,\sigma_{i,j} \in \cM_{5}( \bR^d \times \cP_2(\bR^d) ),$ for $i \in \{1, \ldots, d \}$ and $j \in \{1, \ldots, {d'} \}$. Let $\nu \in \cP_{12}(\bR^d)$.  {Moreover, suppose that one of the following two conditions is satisfied:
\begin{equation*} \begin{cases} 
     \nu \text{ is a Dirac mass, i.e. } \nu = \delta_c, \text{ for some } c \in \bR^d, \\
       &\\
      b \text{ and } \sigma \text{ are uniformly bounded.} 
   \end{cases} 
\end{equation*} }
Then, in $C(\bR_+, \bR)$, the process  $$F^N:= \sqrt{N}	 \big[ \Phi(\nlaw[\cdot])  - \Phi ( \law[\cdot] ) \big]$$ converges weakly to  a Gaussian process $L$ whose finite dimensional distribution $(L_{t_1}, \ldots, L_{t_K})$, $0 \leq t_1 \leq \ldots \leq t_K $, has a zero expectation vector and covariance matrix $\Sigma$ given by 
\begin{eqnarray}\Sigma_{i,j} & := & \text{\emph{Cov}} \bigg( \frac{\delta \cV}{\delta m} (t_i,\nu, \xi_1), \frac{\delta \cV}{\delta m} (t_j,\nu, \xi_1)  \bigg) \nonumber \\
&& + \bE \bigg[ \int_0^{t_i \wedge t_j}  \partial_{\mu} {\cV} \big( t_i-s, \law[s]  \big) (X_s)^T  a(X_s, \law[s])\partial_{\mu} {\cV} \big( t_j-s, \law[s]  \big) (X_s) \, ds  \bigg]. \end{eqnarray}
\end{theorem}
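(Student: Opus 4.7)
The plan is to build on the decomposition from Proposition \ref{thm reg}, $F^N_t = A^N_t + B^N_t + M^N_t$, with $A^N_t := \sqrt{N}(\cV(t,\nlaw[0]) - \cV(t,\nu))$ the initial fluctuation, $B^N_t$ the trace integral and $M^N_t$ the stochastic integral. By Proposition \ref{thm reg}, $\cV \in \cM_5([0,T]\times\cP_2(\bR^d))$ for every $T$, so $\ptwomu \cV$ is bounded. Combined with $\sup_{s \leq T,\, i \leq N} \bE \|a(Y^{i,N}_s, \nlaw[s])\| < \infty$ (trivial when $\sigma$ is bounded; otherwise via standard SDE moment bounds, using $\nu = \delta_c$), this yields $\sup_{t \leq T} \bE|B^N_t| \lesssim 1/\sqrt{N}$, so the trace term is uniformly negligible on compact intervals.

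For finite-dimensional convergence, I would fix $0 \leq t_1 < \cdots < t_K$ and $\alpha \in \bR^K$ and reduce by the Cram\'er--Wold device to proving $\sum_k \alpha_k F^N_{t_k} \xrightarrow{d} \cN(0, \alpha^\top \Sigma \alpha)$. The function $U := \sum_k \alpha_k \cV(t_k,\cdot)$ lies in $\cM_5(\cP_2(\bR^d))$; since $\nu \in \cP_{12}$, Lemma \ref{M2 Lip cont} ensures the hypotheses of Corollary \ref{cortlcs2} hold with $\ell = 12$, giving
\begin{equation*}
\sum_k \alpha_k A^N_{t_k} = \sqrt{N}\big(U(\nlaw[0]) - U(\nu)\big) \xrightarrow{d} \cN\big(0, \alpha^\top \Sigma^{(1)} \alpha\big),
\end{equation*}
where $\Sigma^{(1)}_{ij} := \mathrm{Cov}\!\left(\tfrac{\delta\cV}{\delta m}(t_i,\nu,\xi_1),\, \tfrac{\delta\cV}{\delta m}(t_j,\nu,\xi_1)\right)$ is the first piece of $\Sigma$. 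Next, one writes $\sum_k \alpha_k M^N_{t_k} = \frac{1}{\sqrt{N}}\sum_{i=1}^N \int_0^{t_K} H^{N,i}_s \cdot dW^i_s$ with $H^{N,i}_s := \sum_{k:\, t_k \geq s} \alpha_k \sigma(Y^{i,N}_s,\nlaw[s])^\top \pmu\cV(t_k-s,\nlaw[s])(Y^{i,N}_s)$. This is a continuous martingale with respect to $\cH^N_t := \sigma(\xi_j, W^j_u;\, j \leq N, u \leq t)$. Using uniform-in-time propagation of chaos ($\sup_{s \leq T} \bE\, W_2^2(\nlaw[s], \law[s]) \to 0$) together with the Lipschitz regularity inherited from $\cV \in \cM_5$, its quadratic variation $\frac{1}{N}\sum_i \int_0^{t_K} |H^{N,i}_s|^2\, ds$ converges in probability to the deterministic limit $\alpha^\top \Sigma^{(2)} \alpha$, with $\Sigma^{(2)}$ the integral piece of $\Sigma$. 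The martingale CLT then yields the Gaussian limit $\cN(0, \alpha^\top \Sigma^{(2)} \alpha)$; since this limit is deterministic and $\sum_k \alpha_k A^N_{t_k}$ is $\cH^N_0$-measurable, the two contributions decouple asymptotically and sum to $\cN(0, \alpha^\top \Sigma \alpha)$.

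For tightness of $F^N$ in $C(\bR_+, \bR)$, I would verify the Kolmogorov moment criterion $\bE|F^N_t - F^N_s|^4 \leq C_T |t-s|^2$ for $0 \leq s < t \leq T$. Lemma \ref{lemma four moment of time diff} gives $\bE|A^N_t - A^N_s|^4 \leq C|t-s|^4$ directly, and $|B^N_t - B^N_s| \lesssim |t-s|/\sqrt{N}$ is immediate from boundedness. For $M^N_t - M^N_s$, I split
\begin{equation*}
M^N_t - M^N_s = \tfrac{1}{\sqrt{N}}\sum_i \int_s^t \sigma^\top \pmu\cV(t-u,\cdot)\, dW^i_u + \tfrac{1}{\sqrt{N}}\sum_i \int_0^s \sigma^\top\big[\pmu\cV(t-u,\cdot) - \pmu\cV(s-u,\cdot)\big] dW^i_u,
\end{equation*}
and apply Burkholder--Davis--Gundy: the first piece is bounded in fourth moment by $C|t-s|^2$ via boundedness/moment control of $\sigma^\top\pmu\cV$, and the second by $C|t-s|^4$ via Lipschitz-in-time regularity of $\pmu\cV$ (a consequence of $\cV \in \cM_5$ and differentiation of the master equation \eqref{eq pde measure}).

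The main obstacle is the joint convergence step: one must establish uniform-in-time propagation of chaos and exploit the measure-Lipschitz regularity of $\pmu\cV$ to drive the quadratic variation of the martingale part to its deterministic limit, which is precisely what causes the two Gaussian contributions $\Sigma^{(1)}$ and $\Sigma^{(2)}$ to decouple. Once finite-dimensional convergence and tightness are in hand, Prokhorov's theorem identifies the weak limit as the Gaussian process $L$ with covariance $\Sigma = \Sigma^{(1)} + \Sigma^{(2)}$.
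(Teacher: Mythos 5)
Your proposal is correct in outline and shares the paper's architecture (same three-term decomposition from Proposition \ref{thm reg}, same $L^2$-negligibility of the trace term, same use of Lemma \ref{M2 Lip cont} and Corollary \ref{cortlcs2} with $\ell=12$ for the initial fluctuation, same Kolmogorov fourth-moment tightness estimates), but it diverges from the paper at the key step for the finite-dimensional convergence of the martingale part. The paper first replaces the particle integrand by its i.i.d.\ McKean--Vlasov coupling, proving $\bE|E^N_t|^2\to 0$ via the calculations of Theorem 5.1 in \cite{szpruch2019antithetic} (this is where the $\cM_4$/$\cM_5$ regularity is really spent), and then applies the classical CLT to the resulting i.i.d.\ stochastic integrals, factorizing the joint characteristic function with the initial term. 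You instead apply the martingale CLT directly to $\frac{1}{\sqrt N}\sum_i\int_0^{t_K}H^{N,i}_s\cdot dW^i_s$ and push the propagation-of-chaos coupling into the quadratic variation; this only requires Lipschitz continuity of the integrand plus $\sup_{s\le T}\bE|Y^{i,N}_s-X^i_s|^2\to 0$ and a law of large numbers, so it is arguably lighter on regularity for that particular step, at the price of needing an asymptotic-independence argument for the sum with the $\cH^N_0$-measurable initial term. Your time-Lipschitz claim for $\pmu\cV$ is stronger than the H\"older-$\tfrac12$ bound the paper imports from Lemma 6.1 of \cite{buckdahn2017mean}, but either exponent yields $\bE|M^N_t-M^N_s|^4\le C_T|t-s|^2$, so tightness goes through; note also that Lemma \ref{lemma four moment of time diff} assumes $b,\sigma$ bounded, which is consistent with the theorem's dichotomy since $A^N\equiv 0$ when $\nu$ is a Dirac mass.

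The one place where you are too quick is the sentence ``since this limit is deterministic and $\sum_k\alpha_kA^N_{t_k}$ is $\cH^N_0$-measurable, the two contributions decouple asymptotically.'' This does not follow from the unconditional martingale CLT alone, because $A^N$ is not a fixed random variable: it is measurable with respect to a $\sigma$-field $\cH^N_0=\sigma(\xi_1,\dots,\xi_N)$ that grows with $N$, and $A^N$ converges only in distribution. To close this you should condition: write $\bE[e^{iuA^N+ivM^N}]=\bE\big[e^{iuA^N}\,\bE[e^{ivM^N}\mid\cH^N_0]\big]$ and prove a \emph{conditional} CLT, i.e.\ that $\bE[e^{ivM^N}\mid\cH^N_0]$ converges in probability to the deterministic Gaussian characteristic function (equivalently, invoke the stable/mixing form of the martingale CLT with deterministic limiting bracket, checking the nesting of the filtrations in $N$). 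This is exactly the role played in the paper by the reduction to i.i.d.\ stochastic integrals, for which the conditional variance given the $\xi_j$'s averages to a constant by the strong law of large numbers. The gap is fillable by a standard argument, but as written the decoupling is asserted rather than proved.
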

\begin{proof}
Firstly, by \eqref{eq:fluct}, we decompose $F^N$ as 
$$ F^N_t = \Theta^N_t + \Lambda^N_t,$$
where
$$ \Theta^N_t:=  \int_0^{t} \frac{1}{2} \Bigg[\frac{1}{N^{3/2}} \sum_{i=1}^N  \text{Tr} \bigg( a \big(Y^{i,N}_s, \nlaw[s]  \big)    \partial^2_{\mu}  {\cV} \big( t-s, \nlaw[s]  \big)  (Y^{i,N}_s, Y^{i,N}_s) \bigg) \Bigg] \,ds $$ 
and
$$ \Lambda^N_t:= \sqrt{N} \big( \cV(t,\nlaw[0]) - \cV(t,\nu) \big)   +\frac{1}{\sqrt{N}}\sum_{i=1}^N   \int_0^{t}  \sigma( Y^{i,N}_s, \nlaw[s] )^T \partial_{\mu} {\cV} \big( t-s, \nlaw[s]  \big) ( Y^{i,N}_s) \cdot dW_s^i.$$
By Lemma \ref{mk sk,k} and Corollary \ref{cortlcs2} applied with $\ell=12$, 
\begin{equation}  \bE \Big|\Lambda^N_0 \Big|  = \bE \Big|\sqrt{N} \big( \cV(0,\nlaw[0]) - \cV(0,\nu) \big) \Big|  \leq C, \label{tightness 1} 
\end{equation}
for some constant $C$ that does not depend on $N$. Since $b$ and $\sigma$ are Lipschitz (w.r.t. the Euclidean and $W_2$ norms respectively) and $\nu \in \cP_{12}(\bR^d)$, we have 
\begin{equation} \label{eq:boundedinL12}
     \sup_{u \in [0,t]} \bE[|X_u|^{12}]< +\infty \quad \quad \text{and} \quad \quad \sup_{N \in \bN} \sup_{u \in [0,t]} \bE \bigg[ \frac{1}{N} \sum_{i=1}^N |Y^{i,N}_{u}|^{12} \bigg] < + \infty,
\end{equation}
for any $t>0$. Consequently, by \eqref{eq:boundedinL12} and the fact that  $ \cV \in \cM_4([0,T] \times \cP_2(\bR^d))$ (which implies boundedness of $\ptwomu \cV$ by definition) for any $T>0$, we deduce that, for any $t>0$,
\begin{eqnarray}
\bE| \Theta^N_t|^2 & = & \bE \Bigg[ \bigg| \int_0^{t} \frac{1}{2N^{3/2}} \sum_{i=1}^N  \text{Tr} \bigg( a \big(Y^{i,N}_s, \nlaw[s]  \big)    \partial^2_{\mu}  {\cV} \big( t-s, \nlaw[s]  \big)  (Y^{i,N}_s, Y^{i,N}_s) \bigg)  \,ds \bigg|^2 \Bigg] \nonumber  \\
& \leq & t \bE \int_0^{t} \bigg| \frac{1}{2N^{3/2}} \sum_{i=1}^N  \text{Tr} \bigg( a \big(Y^{i,N}_s, \nlaw[s]  \big)    \partial^2_{\mu}  {\cV} \big( t-s, \nlaw[s]  \big)  (Y^{i,N}_s, Y^{i,N}_s) \bigg) \bigg|^2  \,ds \xrightarrow{N \to \infty} 0. \label{L2 limit theta N}
\end{eqnarray} 
It follows by a similar argument that for any $t_1, t_2 \in [0,T]$, there exists a constant $C_T>0 $ such that
\begin{equation}  \bE| \Theta^N_{t_2} - \Theta^N_{t_1} |^4 \leq C_T|t_2-t_1|^4.  \label{tightness 3} \end{equation}
{
Let 
$$ \cI^N_t :=  \frac{1}{\sqrt{N}}\sum_{i=1}^N   \int_0^{t}  \sigma( Y^{i,N}_s, \nlaw[s] )^T \partial_{\mu} {\cV} \big( t-s, \nlaw[s]  \big) ( Y^{i,N}_s) \cdot dW_s^i. $$
By Lemma 6.1 in \cite{buckdahn2017mean}, 
\begin{equation} 
\forall t_1,t_2\in[0,T],\;\sup_{\mu \in \cP_2(\bR^d), \, y \in \bR^d} \big| \pmu \cV(t_1, \mu)(y) - \pmu \cV(t_2, \mu)(y) \big|    \leq  C_T |t_1 - t_2|^{1/2}, \label{time diff}
\end{equation} 
for some constant $C_T>0$ that only depends on $T$. 
Take any $t_1, t_2 \in [0,T]$ with $ t_1 < t_2$. Then, by \eqref{time diff} and the H\"{o}lder's inequality,
\begin{eqnarray}
\bE   \Big|\cI^N_{t_2}- \cI^N_{t_1} \Big|^4
& \leq & 8 \bE  \bigg[ \bigg( \frac{1}{\sqrt{N}}\sum_{i=1}^N   \int_{0}^{{t_1}}  \sigma( Y^{i,N}_s, \nlaw[s] )^T \partial_{\mu} {\cV} \big( t_2 -s, \nlaw[s]  \big) ( Y^{i,N}_s) \nonumber \\
&& -  \sigma( Y^{i,N}_s, \nlaw[s] )^T \partial_{\mu} {\cV} \big( t_1 -s, \nlaw[s]  \big) ( Y^{i,N}_s) \cdot dW_s^i \bigg)^4  \bigg] \nonumber \\
&& + 8 \bE  \bigg[ \bigg( \frac{1}{\sqrt{N}}\sum_{i=1}^N   \int_{t_1}^{{t_2}}  \sigma( Y^{i,N}_s, \nlaw[s] )^T \partial_{\mu} {\cV} \big( t_2-s, \nlaw[s]  \big) ( Y^{i,N}_s) \cdot dW_s^i \bigg)^4 \bigg]. \label{split tightness} 
\end{eqnarray}
By the Burkholder-Davis-Gundy, Jensen's and H\"{o}lder's inequalities, the second term of \eqref{split tightness} can be bounded by
\begin{eqnarray}
&& \bE  \bigg[ \bigg( \frac{1}{\sqrt{N}}\sum_{i=1}^N   \int_{t_1}^{{t_2}}  \sigma( Y^{i,N}_s, \nlaw[s] )^T \partial_{\mu} {\cV} \big( t_2-s, \nlaw[s]  \big) ( Y^{i,N}_s) \cdot dW_s^i \bigg)^4 \bigg] \nonumber \\
& \leq  & C^{(1)}_T \bE \bigg[ \lev \frac{1}{\sqrt{N}}\sum_{i=1}^N  \int_{t_1}^{\cdot}\sigma( Y^{i,N}_s, \nlaw[s] )^T \partial_{\mu} {\cV} \big( t_2-s, \nlaw[s]  \big) ( Y^{i,N}_s) \cdot dW_s^i \rev_{t_2}^{2} \bigg]  \nonumber  \\
& = & C^{(1)}_T \bE \bigg[ \bigg( \frac{1}{N} \sum_{i=1}^N \int_{t_1}^{t_2}  \Big|\sigma( Y^{i,N}_s, \nlaw[s] )^T \partial_{\mu} {\cV} \big( t_2-s, \nlaw[s]  \big) ( Y^{i,N}_s) \Big|^2 \,ds \bigg)^2 \bigg] \nonumber \\
& \leq & C^{(1)}_T \bE \bigg[  \frac{1}{N} \sum_{i=1}^N  \bigg( \int_{t_1}^{t_2}  \Big|\sigma( Y^{i,N}_s, \nlaw[s] )^T \partial_{\mu} {\cV} \big( t_2-s, \nlaw[s]  \big) ( Y^{i,N}_s) \Big|^2 \,ds \bigg)^2 \bigg] \nonumber \\
& \leq & C^{(2)}_T |t_2-t_1|^{2}, \nonumber 
\end{eqnarray}
for some constants $C^{(1)}_T, C^{(2)}_T$ that only depend on $T$. Repeating the same argument to the first term in \eqref{split tightness}, we observe by \eqref{time diff} that
\[
    \bE   \Big|\cI^N_{t_2}- \cI^N_{t_1} \Big|^4 \leq C_T |t_2 - t_1|^2.
\]
This estimate, alone when $\nu$ is a Dirac mass so that $\mu^N_0=\nu$ and $\Lambda^N_t=\cI^N_t$, and combined with Lemma \ref{lemma four moment of time diff} otherwise,  yields
\begin{equation}
    \bE   \Big|\Lambda^N_{t_2}- \Lambda^N_{t_1} \Big|^4 \leq C_T |t_2 - t_1|^2. \label{tightness 2} 
\end{equation}
}
 By \eqref{tightness 1},  \eqref{tightness 3} and \eqref{tightness 2}, we conclude that the sequence of probability measures $\{ \cL( F^N) \}_N$ is tight on $C(\bR_{+}, \bR)$ (see  Problem 2.4.11 in \cite{karatzas2012brownian}).

Next, we compute the weak limit of the finite dimensional distributions of $F^N$. We first define the coupling of \eqref{eq:MVSDE} given by
$$ X^i_t = \xi_i + \int_0^t b(X^i_s, \law[s])\, ds + \int_0^t \sigma(X^i_s, \law[s]) \, dW^i_s, \quad \quad t \in [0,T], \quad i \in \bN.$$ 
Let 
\begin{eqnarray}
E^N_t & := &  \frac{1}{\sqrt{N}}\sum_{i=1}^N   \int_0^{t}  \sigma( Y^{i,N}_s, \nlaw[s] )^T \partial_{\mu} {\cV} \big( t-s, \nlaw[s]  \big) ( Y^{i,N}_s) \cdot dW_s^i \nonumber \\
&& - \frac{1}{\sqrt{N}}\sum_{i=1}^N   \int_0^{t}  \sigma( X^{i}_s, \law[s] )^T \partial_{\mu} {\cV} \big( t-s, \law[s]  \big) ( X^{i}_s) \cdot dW_s^i , \nonumber
\end{eqnarray}
which implies that
\begin{eqnarray}
\bE| E^N_t|^2 &  = &  \frac{1}{{N}}\sum_{i=1}^N \bE \bigg[   \int_0^{t}  \bigg| \sigma( Y^{i,N}_s, \nlaw[s] )^T \partial_{\mu} {\cV} \big( t-s, \nlaw[s]  \big) ( Y^{i,N}_s)  \nonumber \\
&& -   \sigma( X^{i}_s, \law[s] )^T \partial_{\mu} {\cV} \big( t-s, \law[s]  \big) ( X^{i}_s) \bigg|^2 \,ds \bigg]. \label{eqn particles mckean diff} 
\end{eqnarray}
The assumptions that $\Phi \in \cM_4( \cP_2(\bR^d) )$ and that $b_i,\sigma_{i,j} \in \cM_4( \bR^d \times \cP_2(\bR^d) ),$ for $i \in \{1, \ldots, d \}$ and $j \in \{1, \ldots, {d'} \}$, allow us to repeat the calculations of  Theorem 5.1 in \cite{szpruch2019antithetic} to  deduce  \footnote{This is the main step which requires such a strong regularity assumption on $\nu$, $b$, $\sigma$ and $\Phi$, i.e. the assumption that $\nu \in \cP_{12}(\bR^d)$, $\Phi \in \cM_4( \cP_2(\bR^d) )$ and that $b_i,\sigma_{i,j} \in \cM_4( \bR^d \times \cP_2(\bR^d) ),$ for $i \in \{1, \ldots, d \}$ and $j \in \{1, \ldots, {d'} \}$. The reader is recommended to consult the proof of Theorem 5.1 in \cite{szpruch2019antithetic} for details.} that $\bE| E^N_t|^2 \to 0$, which implies that $E^N_t$ converges to $0$ in probability.

Let $0 \leq t_1 \leq \ldots \leq t_K$. Then $(E^N_{t_1}, E^N_{t_2}, \ldots, E^N_{t_K})$ converges in probability to $(0,0, \ldots, 0)$ and hence converges in distribution to $(0,0, \ldots, 0)$ as well. Similarly, by 
\eqref{L2 limit theta N}, $(\Theta^N_{t_1}, \Theta^N_{t_2}, \ldots, \Theta^N_{t_K})$ converges in distribution to $(0,0, \ldots, 0)$.

For simplicity of notations, for $0 \leq s \leq t $, we denote
$$  \Sigma ((s,t), x, \mu):=  \sigma( x, \mu )^T \partial_{\mu} {\cV} \big( t-s, \mu  \big) ( x) \in \bR^{d'}.$$ 
 Let $\theta_k$ be arbitrary real numbers, $k \in \{1, \ldots , K\}$. Then
 \begin{eqnarray}
 && \lim_{N \to \infty} \bE \bigg[ \exp \bigg\{ i \sum_{k=1}^K \theta_k F^N_{t_k } \bigg\} \bigg]  \nonumber \\
 & = & \lim_{N \to \infty} \Bigg[  \bE \bigg[ \exp \bigg\{ i \sqrt{N} \bigg( \sum_{k=1}^K \theta_k {\cV}( t_k, \nlaw[0]) -\sum_{k=1}^K \theta_k {\cV}(t_k, \nu) \bigg) \bigg\} \bigg] \nonumber \\
 && \,\, \times \bE \bigg[ \exp \bigg\{ i \sum_{k=1}^K \theta_k \bigg[ \frac{1}{\sqrt{N}} \sum_{j=1}^N  \int_0^{t_k} {\Sigma}((s,t_k),X^j_s, \law[s]) \cdot\,dW^j_s \bigg] \bigg\} \bigg] \Bigg] \nonumber \\
 & = &  \lim_{N \to \infty} \Bigg[  \bE \bigg[ \exp \bigg\{ i \sqrt{N} \bigg( \sum_{k=1}^K \theta_k {\cV}( t_k, \nlaw[0]) -\sum_{k=1}^K \theta_k {\cV}(t_k, \nu) \bigg) \bigg\} \bigg] \nonumber \\
 && \,\, \times \bE \bigg[ \exp \bigg\{ i \frac{1}{\sqrt{N}}  \sum_{j=1}^N  \bigg[ \sum_{k=1}^K   \theta_k \int_0^{t_k} {\Sigma}((s,t_k),X^j_s, \law[s]) \cdot \,dW^j_s \bigg] \bigg\} \bigg] \Bigg] \nonumber \\
 &= & \bE[\exp\{ iZ_1\}] \bE[ \exp\{ iZ_2\}],  \label{limiting calcul} 
 \end{eqnarray}
 where $Z_1$ and $Z_2$ are independent normal random variables given by
$$ Z_1 \sim N \bigg( 0, \text{Var}  \bigg( \sum_{k=1}^K \theta_k \frac{\delta {\cV}}{\delta m} (t_k,\nu, \xi_1) \bigg) \bigg)$$  
and
$$ Z_2 \sim N \bigg( 0, \bE \bigg[ \bigg(  \sum_{k=1}^K   \theta_k \int_0^{t_k} {\Sigma}((s,t_k),X^1_s, \law[s]) \cdot \,dW^1_s \bigg)^2 \bigg] \bigg), $$ 
by Corollary \ref{cortlcs2} along with  Lemma \ref{mk sk,k} and the classical central limit theorem respectively. Note that we can also rewrite the variances as
\begin{eqnarray}
&& \text{Var}  \bigg( \sum_{k=1}^K \theta_k \frac{\delta {\cV}}{\delta m} (t_k,\nu, \xi_1) \bigg) \nonumber \\
& = & \sum_{i,j=1}^K   \theta_i \theta_j \text{Cov} \bigg( \frac{\delta {\cV}}{\delta m} (t_i,\nu, \xi_1), \frac{\delta {\cV}}{\delta m} (t_j,\nu, \xi_1)  \bigg) \nonumber 
\end{eqnarray}
and
\begin{eqnarray}
&& \bE \bigg[ \bigg(  \sum_{k=1}^K  \theta_k \int_0^{t_k} {\Sigma}((s,t_k),X^1_s, \law[s]) \cdot \,dW^1_s \bigg)^2 \bigg] \nonumber \\
& = &  \sum_{i,j=1}^K  \theta_i \theta_j  \bE \bigg[ \bigg( \int_0^{t_i} {\Sigma}((s,t_i),X^1_s, \law[s]) \cdot \,dW^1_s \bigg)  \bigg( \int_0^{t_j} {\Sigma}((s,t_j),X^1_s, \law[s]) \cdot \,dW^1_s \bigg) \bigg] \nonumber \\
& = &  \sum_{i,j=1}^K   \theta_i \theta_j  \bE \bigg[ \int_0^{t_i \wedge t_j} {\Sigma}((s,t_i),X^1_s, \law[s]) \cdot {\Sigma}((s,t_j),X^1_s, \law[s]) \, ds  \bigg]. \nonumber 
\end{eqnarray}
By \eqref{limiting calcul}, 
\begin{eqnarray}
&& \lim_{N \to \infty} \bE \bigg[ \exp \bigg\{ i \sum_{k=1}^K \theta_k F^N_{t_k } \bigg\} \bigg]  \nonumber \\
& = & \exp \bigg\{ -\frac{1}{2} \sum_{i,j=1}^K   \theta_i \theta_j \bigg[ \text{Cov} \bigg( \frac{\delta {\cV}}{\delta m} (t_i,\nu, \xi_1), \frac{\delta {\cV}}{\delta m} (t_j,\nu, \xi_1)  \bigg) + \nonumber \\
&& \quad \bE \bigg[ \int_0^{t_i \wedge t_j} {\Sigma}((s,t_i),X^1_s, \law[s]) \cdot {\Sigma}((s,t_j),X^1_s, \law[s]) \, ds  \bigg] \bigg] \bigg\}. \nonumber 
\end{eqnarray}
By the L\'{e}vy's continuity theorem, this shows that the random vector $(\Lambda^N_{t_1}, \ldots, \Lambda^N_{t_K})$ converges weakly to some normal random vector $(L_{t_1}, \ldots, L_{t_K})$, whose expectation vector is zero and covariance matrix $\Sigma$ is given by
$$ \Sigma_{i,j} :=  \text{Cov} \bigg( \frac{\delta {\cV}}{\delta m} (t_i,\nu, \xi_1), \frac{\delta {\cV}}{\delta m} (t_j,\nu, \xi_1)  \bigg) + \bE \bigg[ \int_0^{t_i \wedge t_j} {\Sigma}((s,t_i),X^1_s, \law[s]) \cdot {\Sigma}((s,t_j),X^1_s, \law[s]) \, ds  \bigg]. $$ 
\end{proof}
\section{Appendix}
\begin{lemma}
  For $\ell\in(0,1)$, $W_\ell$ is a metric on ${\cP}_\ell(\R^d)$. Moreover, if $\mu\in\cP_\ell(\bR^d)$ and $(\mu_n)_{n\in\N}$ is a sequence in this space, then $\lim_{n\to\infty}W_\ell(\mu_n,\mu)=0$ iff $\mu_n$ converges weakly to $\mu$ as $n\to\infty$ and $\lim_{n\to\infty}\int_{\R^d}|x|^\ell\mu_n(dx)=\int_{\R^d}|x|^\ell\mu(dx)$.\label{lemwl}
\end{lemma}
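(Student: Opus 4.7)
The plan is to recognise that for $\ell\in(0,1)$ the function $d_\ell(x,y):=|x-y|^\ell$ is itself a metric on $\R^d$: concavity of $t\mapsto t^\ell$ on $\R_+$ yields the subadditivity $(a+b)^\ell\le a^\ell+b^\ell$, hence the triangle inequality for $d_\ell$, while positivity, symmetry and separation of points are obvious. The equivalence $|x_n-x|\to 0\Leftrightarrow|x_n-x|^\ell\to 0$ shows that $d_\ell$ induces the Euclidean topology, so $(\R^d,d_\ell)$ is Polish. Since $\ell\vee 1=1$, the formula \eqref{defwl} defining $W_\ell$ coincides exactly with the $1$-Wasserstein distance on the Polish metric space $(\R^d,d_\ell)$, and the finite-moment condition $\int_{\R^d}|x|^\ell\,\mu(dx)<\infty$ defining $\cP_\ell(\R^d)$ is the same as $\int_{\R^d}d_\ell(0,x)\,\mu(dx)<\infty$.

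Once this identification is made, both conclusions of the lemma become applications of classical $1$-Wasserstein theory. That $W_\ell$ is a metric on $\cP_\ell(\R^d)$ reduces to the standard verification for $W_1$ on a Polish space: finiteness of $W_\ell(\mu_1,\mu_2)$ is obtained from the product coupling together with the elementary bound $|x-y|^\ell\le|x|^\ell+|y|^\ell$; non-negativity, symmetry and the identity of indiscernibles are immediate from the corresponding properties of $d_\ell$; and the triangle inequality follows from the usual gluing lemma combined with the triangle inequality for $d_\ell$.

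For the characterisation of convergence, I plan to invoke Theorem 6.9 of \cite{villani2008optimal} applied to the Polish space $(\R^d,d_\ell)$ with exponent $p=1$: it states that $W_1$-convergence of $\mu_n$ to $\mu$ is equivalent to weak convergence in $(\R^d,d_\ell)$ together with convergence $\int d_\ell(x_0,x)\,\mu_n(dx)\to\int d_\ell(x_0,x)\,\mu(dx)$ for some (and then every) $x_0$. Choosing $x_0=0$ turns this moment condition into $\int|x|^\ell\,\mu_n(dx)\to\int|x|^\ell\,\mu(dx)$, while the coincidence of topologies noted in the first paragraph means that weak convergence with respect to $d_\ell$ is the same as weak convergence with respect to $|\cdot|$. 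Combining these identifications yields exactly the equivalence claimed.

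The only non-routine point is the justification that Villani's statement, written for a generic Polish space, applies verbatim to the non-Euclidean (but topologically equivalent) metric $d_\ell$; this is covered once the Polish property of $(\R^d,d_\ell)$ is established in the first step, so no further obstacle arises.
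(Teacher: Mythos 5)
Your proposal is correct, but it takes a different route from the paper's. You observe that for $\ell\in(0,1)$ the map $d_\ell(x,y)=|x-y|^\ell$ is itself a complete separable metric on $\R^d$ inducing the Euclidean topology, so that $W_\ell$ (with exponent $1/(\ell\vee 1)=1$) is literally the $1$-Wasserstein distance on the Polish space $(\R^d,d_\ell)$ and $\cP_\ell(\R^d)$ is the corresponding space of measures with finite first $d_\ell$-moment; the whole lemma then follows from the general Polish-space statements in \cite{villani2008optimal} (that $W_1$ is a metric, and Theorem 6.9 for the convergence characterisation), using that weak convergence is a topological notion unaffected by replacing $|\cdot|$ with $d_\ell$. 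The paper instead re-proves everything by hand: it verifies finiteness and separation via optimal couplings (Theorem 4.1 of \cite{villani2008optimal}), establishes the triangle inequality by gluing disintegrations together with the subadditivity of $u\mapsto u^\ell$ (which is exactly the triangle inequality for your $d_\ell$, so the computations are parallel), proves the forward implication of the convergence statement via the explicit bounds $\underline W\le W_\ell$ and $\bigl|\int|x|^\ell\mu(dx)-\int|y|^\ell\nu(dy)\bigr|\le W_\ell(\mu,\nu)$, and only sketches the converse by referring to the proof for $\ell\ge 1$ in \cite{villani2008optimal}. Your reduction is more economical and actually handles the converse direction cleanly in one stroke, whereas the paper's direct verification has the side benefit of producing explicit inequalities that resemble those reused elsewhere in the text; both arguments are sound, and your one potential sticking point (applicability of Villani's theorem to a non-Euclidean but topologically equivalent ground metric) is adequately addressed by establishing that $(\R^d,d_\ell)$ is Polish.
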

\begin{proof}Let $\mu,\nu,\eta\in{\cP}_\ell(\R^d)$. Clearly $W_\ell(\mu,\nu)=W_\ell(\nu,\mu)$. By the triangle inequality and the subadditivity of $\R_+\ni u\mapsto u^\ell$, \begin{equation}
   \forall (x,y)\in\R^d\times\R^d, |x-y|^\ell\le|x|^\ell+|y|^\ell\mbox{ and }\left||x|^\ell-|y|^\ell\right|\le |x-y|^\ell.\label{triangsub}
 \end{equation} With the definition \eqref{defwl} of $W_\ell$, the first inequality implies that $W_\ell(\mu,\nu)\le\int_{\R^d}|x|^\ell\mu(dx)+\int_{\R^d}|y|^\ell\nu(dy)<\infty$. By Theorem 4.1 in \cite{villani2008optimal}, there is an optimal coupling $\rho$ between $\mu$ and $\nu$ i.e. an element of ${\cP}_\ell(\R^d\times\R^d)$ with first marginal $\mu$ and second marginal $\nu$ such that $W_\ell(\mu,\nu)=\int_{\R^d\times\R^d}|x-y|^\ell\rho(dx,dy)$. When $W_\ell(\mu,\nu)=0$ then the optimal coupling $\rho$ gives full weight to the diagonal $\{(x,x):x\in\R^d\}$ so that the two marginals $\mu$ and $\nu$ coincide. Since $\int_{\R^d\times\R^d}|x-y|^\ell\delta_x(dy)\mu(dx)=0$, where $\delta_x(dy)\mu(dx)$ is a coupling between $\mu$ and $\mu$, $W_\ell(\mu,\mu)=0$.
  To prove the triangle inequality we write disintegrations $\rho_y(dx)\nu(dy)$ and $\pi_y(dz)\nu(dy)$ of optimal couplings $\rho(dx,dy)$ and $\pi(dy,dz)$ between $\mu$ and $\nu$ and between $\nu$ and $\eta$. Then $\int_{y\in\R^d}\rho_y(dx)\pi_y(dz)\nu(dy)$ is a coupling between $\mu$ and $\eta$ and by subadditivity of $\R_+\ni u\mapsto u^\ell$,
\begin{align*}
   W_\ell(\mu,\eta)&\le \int_{(x,z)\R^d\times\R^d}|x-z|^\ell\int_{y\in\R^d}\rho_y(dx)\pi_y(dz)\nu(dy)\\&\le \int_{\R^d\times\R^d\times\R^d}|x-y|^\ell+|y-z|^\ell\rho_y(dx)\pi_y(dz)\nu(dy)=W_\ell(\mu,\nu)+W_\ell(\nu,\eta)
\end{align*}
so that the triangle inequality holds. Therefore $W_\ell$ is a metric on ${\cP}_\ell(\R^d)$.

Let $\underline W$ be defined as $W_1$ but with $|x-y|\wedge 1$ replacing the integrand $|x-y|$ in \eqref{defwl}. By Corollary 6.13 \cite{villani2008optimal}, $\underline W$ metricises the topology of weak convergence on ${\cP}_0(\R^d)$. Since for all $x,y\in\R^d$, $|x-y|\wedge 1\le |x-y|^\ell$, $\underline W\le W_\ell$. Morover, the second inequality in \eqref{triangsub} and the existence of an optimal coupling $\rho$ between $\mu$ and $\nu$ imply that
 \begin{align*}
  \left|\int_{\R^d}|x|^\ell\mu(dx)- \int_{\R^d}|y|^\ell\mu(dy)\right|&=\left|\int_{\R^d\times\R^d}\left(|x|^\ell- |y|^\ell\right)\rho(dx,dy)\right|\le\int_{\R^d\times\R^d}\left||x|^\ell- |y|^\ell\right|\rho(dx,dy)\\&\le \int_{\R^d\times\R^d}|x-y|^\ell\rho(dx,dy)=W_\ell(\mu,\nu).
 \end{align*}
Hence if $(\mu_n)_{n\in\N}$ is a sequence in $\cP_\ell(\R^d)$ such that $\lim_{n\to\infty}W_\ell(\mu_n,\mu)=0$, then $\mu_n$ converges weakly to $\mu$ as $n\to\infty$ and $\lim_{n\to\infty}\int_{\R^d}|x|^\ell\mu_n(dx)=\int_{\R^d}|x|^\ell\mu(dx)$. The converse implication can be checked by repeating the proof of the same statement for $\ell\ge 1$ p101-103 \cite{villani2008optimal}.

\end{proof}

\end{document}